\newtheorem{thm}{Theorem}[section]
\newtheorem{lemma}[thm]{Lemma}
\newtheorem{cor}[thm]{Corollary}
\newtheorem{prop}[thm]{Proposition}
\newtheorem{defi}[thm]{Definition}
\newtheorem{hyp}[thm]{Hypothesis}
\theoremstyle{definition}
\newtheorem{rem}[thm]{Remark}
\newtheorem{exm}[thm]{Example}
\def\CC{{\mathcal{C}}}
\def\CI{{\mathcal{I}}}
\def\CJ{{\mathcal{J}}}
\def\CN{{\mathcal{N}}}
\def\CS{{\mathcal{S}}}
\def\CW{{\mathcal{W}}}
\def\CV{{\mathcal{V}}}
\def\Fn{{\mathfrak{n}}}
\def\FN{{\mathfrak{N}}}
\def\FU{\mathfrak{u}}
\def\Fu{A}
\def\bZ{{\mathbb Z}}
\def\bP{{\mathbb P}}
\def\dim{\operatorname{dim}\nolimits}
\def\Ten{\operatorname{Ten}\nolimits}
\def\proj{\operatorname{(proj)}\nolimits}
\def\Det{\operatorname{Det}\nolimits}
\def\Ten{\operatorname{Ten}\nolimits}
\def\ovl#1{\overline{#1}}
\def\HHH{\operatorname{H}\nolimits}
\def\Hom{\operatorname{Hom}\nolimits}
\def\Dist{\operatorname{Dist}\nolimits}
\title{Endotrivial modules for finite group schemes}
\author[Jon F. Carlson]{Jon F. Carlson}
\thanks{Research of the first author partially supported by NSF grant
DMS-0654173}
\address{Department of Mathematics, University of Georgia,
Athens, Georgia 30602, USA}
\email{jfc@math.uga.edu}
\author[Daniel K. Nakano]{Daniel K. Nakano}
\thanks{Research of the second author partially supported by NSF grant
DMS-0654169}
\address{Department of Mathematics, University of Georgia,
Athens, Georgia 30602, USA}
\email{nakano@math.uga.edu}
\date\today
\subjclass{20C20}
\begin{document}

\begin{abstract} It is well known that if $G$ is a finite group then the group 
of endotrivial modules is finitely generated.  In 
this paper we investigate endotrivial modules over arbitrary 
finite group schemes. Our results can be applied to computing the endotrivial group 
for several classes of infinitesimal group schemes which include 
the Frobenius kernels of parabolic subgroups, and their unipotent radicals 
(for reductive algebraic groups). 
For $G$ reductive, we also present a classification of simple, 
induced/Weyl and tilting modules ($G$-modules) which are endotrivial 
over the Frobenius kernel $G_{r}$ of $G$.  
\end{abstract}

\maketitle

\section{Introduction}

Let $A$ be a finite dimensional cocommutative Hopf algebra over a field $k$. 
An endotrivial module is an $A$-module $M$ with the property that 
$\Hom_k(M,M) \cong k \oplus P$ where $P$ is a 
projective $A$-module and the isomorphism is a map of $A$-modules.
Because $\Hom_k(M,M) \cong M \otimes_k M^*$
where $M^*$ is the $k$-dual of $M$, we have that $M \otimes -$ is a 
self equivalence of the stable category of all $A$-modules modulo 
projective modules. Thus the endotrivial modules determine a subgroup 
of the Picard group of self equivalences of the stable category. 
In addition, endotrivial modules form an interesting class of modules 
which in many cases is classifiable even though the category of all $A$-modules is 
wild, in general. 

In the case that $A= kG$ is the group algebra of a finite group $G$ 
with coefficients in a field of characteristic $p>0$, the endotrivial 
modules play a big part in the representation theory. 
When $G$ is a $p$-group, the first author and Th\'evenaz classified \cite{CaTh2,CaTh3} the endotrivial modules, 
building on the work of Dade, Alperin and 
others  \cite{A1,D1a, D1b}. This work was used by Bouc \cite{B}
in his classification of endopermutation
modules for $p$-groups. For other finite groups, the two authors together with
Mazza and Hemmer have computed the group of endotrivial modules for 
groups of Lie type \cite{CMN1} and 
for alternating and symmetric groups \cite{CMN2,CHM}.

The aim of this paper is to initiate an investigation 
of endotrivial modules for arbitrary finite 
group schemes. We are particularly interested in the case where the
group scheme is the Frobenius kernel of an algebraic group. 
Unfortunately, many of the methods used in the classification 
of endotrivial modules over finite groups do not work in the more general setting
or can be applied only after some adaptation. For arbitrary finite group 
schemes, we must rely on more geometric techniques. The methods 
work best in the case that 
the group scheme is unipotent or has a nontrivial unipotent radical. 
The unipotent group schemes are analogous
to $p$-groups in the realm of finite groups and here we can call upon the 
classification by Dade of the endotrivial modules over abelian $p$-groups 
\cite{D1a,D1b} (even though the Hopf algebra structures may be different). In
the case of a unipotent restricted $p$-Lie algebra (associated to a reductive 
algebra group), we obtain a complete classification of the endotrivial modules. 
In all of the examples that we have been able to work out, the group 
of endotrivial modules is finitely generated.  
Yet, for general finite group schemes, 
a proof of the finite generation of the group of endotrivial modules 
remains an open problem (see Section 9). 

We briefly describe the results of the paper. First, 
we set up the necessary notation that 
will be used throughout the paper in the next
section. In Section 3, we present general 
results on endotrivial modules for finite group 
schemes and show that if $G$ is a finite unipotent 
group scheme then for any fixed finite integer 
$n$ there is only a finite number of endotrivial 
$G$-modules of dimension $n$ (cf. Theorem 3.5). 
This indicates that for unipotent finite group schemes 
the problem of classifying endotrivial modules is not a wild problem. 
Furthermore in Section 4, for unipotent group schemes, a criterion 
is given via the connectedness of a subvariety 
of $\pi$-points, $\Pi(G)$, which is sufficient 
to prove that the group of endotrivial modules 
is isomorphic to ${\mathbb Z}$ (i.e., the endotrivial 
modules are given by syzygies of the trivial module). 
In that section, we have attempted to 
extract the exact hypotheses necessary to prove this result
in the hopes that the methods will have wider applications.

A semisimple algebraic group scheme $G$ has a
Borel subgroup $B$ with unipotent 
radical $U$ and, for $J$ a subset of the simple roots, 
parabolic subgroups $P_{J}$ and their 
associated unipotent radicals $U_{J}$. Let $G_{r}$, $B_{r}$, 
$(P_{J})_{r}$ and $(U_J)_r$ denote their $r^{th}$ infinitesimal 
Frobenius kernels. In Sections 5 and 6, 
we compute the endotrivial 
groups for $B_{1}$, $U_{1}$, $(P_{J})_{1}$ and $(U_{J})_{1}$ 
when $J\neq \varnothing$.  We show, using the methods of 
Section 4, that except in a couple of 
low rank cases the group of endotrivial modules for 
$U_1$ is generated by $\Omega(k)$, the syzygy of the 
trivial module. From this we can compute the endotrivial
group for $B_1$ and for $(P_{J})_{1}$. 
The lone remaining case for the first Frobenius kernel 
is the computation of the endotrivial group for $G_{1}$. 

For $G=SL_{2}$ we compute the endotrivial group 
for the first Frobenius kernel in Section 7, and prove that there 
are Weyl modules (which are not syzygies of the 
trivial module) which are endotrivial over $G_{r}$ when $r\geq 1$. 
This leads us to investigate the question of 
classifying simple, induced/Weyl, and tilting modules 
over $G$ which become endotrivial over $G_{r}$. 
In Section 8, we demonstrate that this cannot happen 
when the Lie rank of $G$ is greater than two. We suspect 
for large enough Lie rank that the group of endotrivial modules over $G_{r}$ is
isomorphic to ${\mathbb Z}$. In Section 9, we describe connections with the work of Balmer
on the endotrivial group and the Picard group of the projectivization of 
the cohomological spectrum. In the final section, we present some additional open problems that 
are inspired from the results of this paper. 

The authors would like to thank Ivo Dell'Ambrogio and Robert Varley for 
several useful discussions involving Picard groups and their connections 
with endotrivial modules.

\section{Notation and preliminaries}

In this section we set some notation and recall a few facts about 
representations of group schemes. Throughout the paper, let $k$ be a 
field of characteristic $p>0$. A finite group scheme $G$ over $k$ is a
group scheme over $k$ whose coordinate algebra $k[G]$ has finite $k$-dimension.  
The group algebra of $G$ is the dual of $k[G]$, the coordinate ring, 
and is denoted $kG$. The algebra $kG$ is a cocommutative 
Hopf algebra. In some cases 
$kG$ is isomorphic as an algebra to the group algebra of a finite group.
For example, if $G$ is a unipotent abelian group scheme, then $kG$
is isomorphic as an algebra to the group algebra of a finite abelian
$p$-group, though the coalgebra structures may be different.

By a $kG$-module we mean a finitely generated
$kG$-module. In particular, we need to define the 
rank variety for a $kG$-module in terms of $\pi$-points as in the work of Friedlander and 
Pevtsova \cite{FP}. Some of the notation that is introduced in this 
section is mostly necessary for the definitions of the rank variety.

A rational $G$-module is  both a module for the group algebra
$kG$ and comodule for the coordinate ring  $k[G]$.
For a field extension $K$ of $k$ let
$G_K$ be the base change of the $k$-group scheme
$G$ to the $K$-group scheme
$G_K = G \times_{\text{Spec}(k)} \text{Spec}(K)$. Note that
that the group algebra $KG_K$ of $G_K$ is the extension of
scalars,  $KG = K\otimes_k kG$. 

A $\pi$-point  for a finite group scheme $G$
is a flat map of $K$-algebras
$\alpha_K: K[t]/(t^p) \longrightarrow KG_K$, where $K$ is a  field
extension of $k$, such that the map factors through the group algebra
$KU_K \subset KG_K$ of some unipotent abelian subgroup
scheme $U_K \subset G_K$. A $\pi$-point is not assumed to be a 
map of Hopf algebras. 

To define the rank variety of $\pi$-points we must introduce an 
equivalence relation. We say that two $\pi$-points 
$\alpha_K, \beta_L$ are
equivalent if for every finitely generated $kG$-module $M$,
the $K[t]/(t^p)$-module $\alpha_K^*(M_K)$ is projective if and only if
the $L[t]/(t^p)$-module $\beta_L^*(M_L)$ is projective.
Here $\alpha_K^*(M_K)$ is the restriction of $M_K = K \otimes M$
to a $K[t]/(t^p)$-module along the map $\alpha_K$.

The set of equivalence classes
of $\pi$-points, which we denote by  $\Pi(G)$, 
has a scheme structure which is defined by the representation 
theory.  That is, a subset of $\Pi(G)$
is defined to be closed if it has the form $\Pi(G)_M$ where $M$
is a finite dimensional $kG$-module and $\Pi(G)_M$
is the subset of those equivalence
classes of $\pi$-points
$\alpha_K: K[t]/(t^p) \longrightarrow  KG$
such that $\alpha_K^*(M_K)$ is not a projective module.
With this definition, it can be proved that the scheme $\Pi(G)$ is homeomorphic
to the scheme $\text{Proj}(\HHH^*(G,k))$, the projective prime ideal spectrum
of $\HHH^*(G,k)$.

In the case that the field $k$ is algebraically closed we can get a 
simpler  formulation which is called the set of $p$-points \cite{FP1}.
A $p$-point is simply a $\pi$-point $\alpha: k[t]/(t^p) \longrightarrow kG$
which involves no field extension.  Two $p$-points are equivalent if 
they satisfy the same condition as above, i.e., for any $kG$-module
$M$, the restriction along one is projective if and only if the restriction 
along the other is projective. We also impose a topology on $\pi(G)$,
the set of equivalence classes of $p$-points, 
exactly as above.  With this topology, for $k$ algebraically closed 
$\pi(G)$ is homeomorphic
to the projectivized maximal ideal spectrum of $\HHH^*(G,k)$. 
This is the variety of closed points in $\text{Proj}(\HHH^*(G,k))$.
A $kG$-module $M$ is projective if and only if its restriction along
every $p$-point is projective.

Modules over the algebra $k[t]/(t^p)$ are classified by Jordan type,
i.e., by the Jordan canonical form of the matrix $A_t$ of the action
of the $t$ on the module. Because, $(A_t)^p = 0$, we know that all 
of the eigenvalues of $A_t$ are $0$ and no Jordan block has more than 
$p$ rows. We write that the Jordan type of a
$k[t]/(t^p)$-module $M$ is $a_p[p] + \cdots + a_1[1]$
if the matrix $A_t$ of $t$ on $M$ has $a_p$ blocks of size $p$,
$a_{p-1}$ blocks of size $p-1$, etc. Here,
\[
a_pp + a_{p-1}(p-1) + \dots + a_1 = n = \dim M.
\] 
The Jordan 
type is a partition of $n = \dim M$ having the form
$(p^{a_{p}},(p-1)^{a_{p-1}},\dots,1^{a_{1}})$, 
where for each $i$ there are $a_i$ of the entries 
in the partition with value $i$. 

The next result is well known, but we sketch a proof 
for the sake of completeness. 

\begin{lemma} \label{uni-dim-count}
Suppose that $G$ is a finite unipotent group scheme over $k$. Then
the socle of $kG$ has dimension one. Let $u$ be a generator for the
socle of $kG$. Suppose that $M$ is a $kG$-module and that $uM$ has 
dimension $r$. Then $M \cong (kG)^r \oplus N$ where $N$ has no 
projective submodules. 
\end{lemma}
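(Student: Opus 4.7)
The plan is to exploit the fact that $kG$ is a local Frobenius algebra with a distinguished generator of its socle. Since $G$ is unipotent, every simple $kG$-module is trivial, so $kG$ is local with residue field $k$. A finite-dimensional Hopf algebra is Frobenius (Larson--Sweedler), so $\dim \Soc(kG) = \dim(kG/\Rad(kG)) = 1$, which establishes the first assertion. Because $kG$ is cocommutative it is also unimodular, so I take $u$ to be a nonzero two-sided integral: $au = ua = \epsilon(a)u$ for all $a \in kG$. Then $u$ generates $\Soc(kG)$, and the right ideal $u \cdot kG$ equals $ku$; in particular $u \cdot (kG)^r$ has $k$-dimension $r$ for every free module.

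The key intermediate fact is that in a local Artinian ring with simple socle $ku$, every nonzero left ideal contains $u$: a nonzero left ideal is a nonzero left submodule of $kG$, hence has nonzero socle, which must coincide with $\Soc(kG) = ku$. Using this I prove the contrapositive of the main claim: if $N$ has no projective submodule, then $uN = 0$. For suppose $n \in N$ satisfies $un \neq 0$, and let $I = \{a \in kG : an = 0\}$ be the left annihilator of $n$. Then $u \notin I$, forcing $I = 0$ by the intermediate fact, so the cyclic submodule $kG \cdot n \cong kG/I \cong kG$ is a projective submodule of $N$, contradicting the hypothesis.

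To finish, apply Krull--Schmidt to decompose $M = P \oplus N$, where $P \cong (kG)^s$ is a maximal projective summand and $N$ has no projective summand. Because $kG$ is self-injective, any projective submodule of $N$ would split off as a summand, so $N$ contains no projective submodule either, and the contrapositive gives $uN = 0$. Consequently $uM = uP \oplus uN = uP$ has $k$-dimension $s$, and the hypothesis $\dim uM = r$ forces $s = r$, yielding the desired decomposition. The only ingredient not immediate from local Artinian module theory is the Frobenius/unimodular structure of the Hopf algebra $kG$, both of which are standard consequences of finite-dimensionality and cocommutativity; no real technical obstacle arises.
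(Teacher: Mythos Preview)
Your proof is correct and relies on the same ingredients as the paper---$kG$ is local and self-injective, so the socle is one-dimensional and free modules are injective---but the organization differs. The paper lifts a $k$-basis $um_1,\dots,um_r$ of $uM$ to elements $m_1,\dots,m_r\in M$, observes that the map $(kG)^r\to M$ sending $e_i\mapsto m_i$ is injective on socles and hence injective, and then splits it off using injectivity of $(kG)^r$; the complement $N$ automatically has $uN=0$ and therefore no projective submodule. You instead invoke Krull--Schmidt first to write $M=(kG)^s\oplus N$, prove via an annihilator argument that $uN=0$ whenever $N$ has no free submodule, and then read off $s=r$. The paper's route is a little more direct (no appeal to Krull--Schmidt), while yours cleanly isolates the equivalence ``$uN=0\iff N$ has no free cyclic submodule,'' which is a nice standalone fact.

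One minor remark: you do not need unimodularity to conclude $u\cdot kG=ku$. Since $u$ generates the left socle, $\Rad(kG)\,u=0$, hence $\Rad(kG)\,(u\cdot kG)=(\Rad(kG)\,u)\cdot kG=0$, so $u\cdot kG\subseteq\Soc(kG)=ku$ directly. The Hopf-theoretic input (Larson--Sweedler) is only needed for self-injectivity, and even that the paper simply cites as a standard property of $kG$.
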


\begin{proof}
The first statement is a consequence of the fact that $kG$ is a local
self-injective $k$-algebra with only a single isomorphism class of
irreducible modules. In particular, we note that every projective 
$kG$-module is free. The reader is referred to Chapter 8 in Part I 
of \cite{Jan} for more details. Let $u$ be a generator
for the socle and suppose that $M$ is a finitely generated  $kG$-module.
Let $um_1, um_2, \dots, um_r$ be a basis
of $uM$ for some elements $m_1, \dots, m_r \in M$. Let $P$ be a 
free $kG$-module of $kG$-rank $r$ having generators $e_1, \dots, e_r$.
Then the map $\psi: P \longrightarrow M$, given by $\psi(e_i) = 
m_i$ for $i = 1, \dots, r$ is an injection, because it is injective 
on the socle of $P$. But now, $P$ is also an injective module and
hence $\psi$ is left split. So, $M \cong P \oplus N$ for some $N$. 
We know that $N$ has no projective submodules because $uN = \{0\}$. 
\end{proof}


\section{Generalities on endotrivial modules}
In this section we consider some basic issues with endotrivial modules
over a unipotent group scheme $kG$. The results here will be important
in the remainder of the paper. Throughout the section we assume that the 
field $k$ is algebraically closed. First the definition.

\begin{defi}
Suppose that $G$ is any finite group scheme defined over $k$. A $kG$-module
is an endotrivial module provided that, as $kG$-modules,
\[
\Hom_k(M,M) \cong k \oplus P
\]
for some projective module $P$. 
\end{defi}

In other words, a $kG$-module $M$ is an endotrivial module if its 
$k$-endomorphism ring is isomorphic to the trivial module in the stable
category of $kG$-modules modulo projectives. For any $kG$-module $M$
there is a canonical isomorphism $\Hom_k(M,M) \cong M^* \otimes M$. Using
this we define the group $T(kG)$ of endotrivial $kG$-modules as follows. 
The objects in $T(kG)$ are the equivalence classes $[M]$ of endotrivial 
$kG$-modules $M$ where the equivalence relation is given by the rule that
$[M] = [N]$ if there are projective modules $P$ and $Q$ such that
$M \oplus P \cong N \oplus Q$. The operation on the group is given
by $[M] + [N] = [M \otimes N]$. Hence, the identity element is the class $[k]$
and the inverse of $[M]$ is the class $[M^*]$. 

The following theorem proved in \cite{CFP} is very useful for our 
analysis. Note that the Jordan type of a $kG$-module is completely
independent on the coalgebra structure. Consequently, the theorem
implies that the property of a $kG$-module being endotrivial is 
independent of the coalgebra structure. We emphasize that this result
depends on the field $k$ being algebraically closed. Otherwise, we must
rely on $\pi$-points rather than $p$-points.

\begin{thm} \label{pi-char-endotriv} 
Suppose that $G$ is a finite group scheme defined over $k$ and that 
$M$ is a finitely generated $kG$-module. Then $M$ is an endotrivial
module if and only if for any $p$-point $\alpha_k: k[t]/(t^p)
\longrightarrow kG$, the restriction $\alpha^*(M)$ has Jordan 
type either $[1] + n[p]$ or $[p-1] + n[p]$ for some integer $n$. 
\end{thm}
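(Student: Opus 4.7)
The plan is to reduce the problem to a local Jordan-type calculation at each $p$-point and then carry out a combinatorial analysis of tensor squares in $k[t]/(t^p)$-modules. By definition $M$ is endotrivial iff $M^* \otimes M \cong k \oplus P$ for some projective $kG$-module $P$, and since every $k[t]/(t^p)$-module is self-dual, for any $p$-point $\alpha$ the Jordan types satisfy
\[
\alpha^*(M^* \otimes M) = \alpha^*(M) \otimes \alpha^*(M).
\]
The target $k \oplus (\text{projective})$ restricts at every $p$-point to the Jordan type $[1] + m_\alpha[p]$, so the question becomes: which Jordan types $\lambda$ satisfy $\lambda \otimes \lambda = [1] + m[p]$?

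For the direction $(\Rightarrow)$, restricting the hypothesis to $\alpha$ gives $\alpha^*(M) \otimes \alpha^*(M) = [1] + m_\alpha[p]$. Writing $\alpha^*(M) = \sum_{i=1}^{p} a_i[i]$, I would first observe that $[i]\otimes[j]$ contains the summand $[1]$ only when $i=j<p$ (the lowest non-projective summand of $[i]\otimes[j]$ is $[|i-j|+1]$, which equals $[1]$ iff $i=j$), and in that case with multiplicity exactly one. Hence the multiplicity of $[1]$ in $\alpha^*(M) \otimes \alpha^*(M)$ equals $\sum_{i=1}^{p-1} a_i^2$, which must equal $1$. So exactly one $a_i$ with $i<p$ is nonzero and equals $1$, giving $\alpha^*(M) = [i] + b[p]$ for a single $i \in \{1,\dots,p-1\}$. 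The $\text{SL}_2$-style Clebsch--Gordan rule for $k[t]/(t^p)$-modules then yields $[i]\otimes[i] = [1] + [3] + \cdots + [2p-2i-1] + c[p]$, whose non-projective part collapses to $[1]$ precisely when $i = 1$ or $i = p-1$. This produces the claimed Jordan types.

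For the direction $(\Leftarrow)$, the Jordan-type hypothesis and the same Clebsch--Gordan computation (together with the identities $[p-1]\otimes[p-1] = [1] + (p-2)[p]$ and $[p]\otimes[j] = j[p]$) show $\alpha^*(M^*\otimes M) = [1] + m_\alpha[p]$ at every $p$-point. Moreover, since both $1+np$ and $p-1+np$ are coprime to $p$, we have $\dim_k M$ coprime to $p$, so the trace map $\mathrm{tr}\colon M^*\otimes M \to k$ splits the canonical unit $k \to M^*\otimes M$ sending $1 \mapsto \id_M$, giving $M^*\otimes M \cong k \oplus Q$ as $kG$-modules. At each $p$-point $\alpha^*(Q) = m_\alpha[p]$ is a projective $k[t]/(t^p)$-module, so by the $p$-point characterization of projectivity $Q$ is projective, and $M$ is endotrivial.

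The main obstacle is the wrap-around analysis of $[i]\otimes[i]$ in the Clebsch--Gordan rule when $i > p/2$, where additional projective summands appear; one must verify that the explicit non-projective decomposition $[1] + [3] + \cdots + [2p-2i-1]$ is correct and uniform in all characteristics $p$ (including the small cases $p=2, 3$), so that the elimination of all intermediate values of $i$ is complete.
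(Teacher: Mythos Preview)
Your argument for $(\Leftarrow)$ is correct and takes a genuinely different route from the paper's. The paper invokes Theorem~5.6 of \cite{CFP}, which is exactly this statement but quantified over all $\pi$-points $\alpha_K:K[t]/(t^p)\to KG_K$ for arbitrary extensions $K/k$, and then spends its effort bridging $p$-points to $\pi$-points: it observes that $[1]+n[p]$ and $[p-1]+n[p]$ are maximal in the dominance order, so by the generic/maximal Jordan type results of \cite{FPS} every $\pi$-point specializes to a closed ($p$-)point without dropping below this type, hence must itself have that type. Your approach sidesteps \cite{FPS} and the constant-Jordan-type framework entirely: splitting $k\hookrightarrow M^*\otimes M$ via the trace (legitimate since $p\nmid\dim M$) reduces the question to projectivity of the complement $Q$, and for that the $p$-point detection theorem recalled in Section~2 suffices directly. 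This is more elementary; what the paper's route buys is the stronger intermediate conclusion that $M$ has constant Jordan type, placing the result inside that theory.

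One step you present as obvious is not: the identity $\alpha^*(M^*\otimes M)\cong\alpha^*(M)\otimes\alpha^*(M)$ as $k[t]/(t^p)$-modules. A $p$-point is only a flat \emph{algebra} map---the paper stresses in Section~2 that it is not assumed to be a Hopf map---so pullback along $\alpha$ has no a~priori reason to commute with the Hopf-theoretic tensor product on $kG$-modules. The correct statement, that the Jordan type of $\alpha^*(M\otimes N)$ agrees with that of $\alpha^*(M)\otimes\alpha^*(N)$, holds because $\alpha$ factors through a unipotent abelian subgroup scheme and is established in \cite{FP,CFP}; you rely on it in both directions and should cite it rather than appeal to self-duality (which addresses a different point). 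Your Clebsch--Gordan conclusion is fine: the non-projective part of $[i]\otimes[i]$ is $[1]+[3]+\cdots+[\min(2i-1,\,2p-2i-1)]$, which collapses to $[1]$ exactly when $i\in\{1,p-1\}$; the displayed formula is only the $i>p/2$ branch, but the outcome is unaffected.
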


\begin{proof}
We note first that if $M$ is endotrivial then 
the Jordan type of any $p$-point is as stated. Hence, we need only
prove the converse. 
We know from Theorem 5.6 of \cite{CFP}, that $M$ is endotrivial if
$\alpha_K^*(M_K)$ has the prescribed Jordan type for all
$\pi$-points $\alpha_K:K[t]/(t^p) \longrightarrow KG_K$. We need 
only show that $M$ is endotrivial if the condition holds for $\pi$-points
defined over $K = k$. 

Suppose that for any $p$-point 
$\alpha:k[t]/(t^p) \longrightarrow kG$ the restricted module $\alpha^*(M)$
has Jordan type either $[1] + n[p]$ or $[p-1] + n[p]$ for some integer $n$.
Then the partition corresponding to this Jordan type is maximal in 
the dominance ordering of partitions of $\dim(M)$. Hence, such a $p$-point 
has maximal Jordan type. By \cite{FPS}, any $\pi$-point in the 
equivalence class of such an $\alpha$ also has maximal Jordan type. 
Any $\pi$-point $\alpha_K: K[t]/(t^p) \longrightarrow KG$,
must also have maximal Jordan type $[1] + n[p]$ or $[p-1] + n[p]$.
The reason is that, because $k$ is algebraically closed and $M$ is 
finite dimensional, the class of $\alpha_K$ must specialize to some 
closed point $[\beta]$ in $\Pi(G)$ and the Jordan type of $\alpha^*_K(M_K)$
must be larger than that of $\beta^*(M)$. As the closed points in 
$\Pi(G)$ are represented by $p$-points defined over $k$, the 
Jordan type of $\alpha^*_K(M_K)$ must be maximal. 
We conclude that $M$ has constant Jordan as defined in \cite{CFP},
and by the first statement of the theorem, $M$ is endotrivial. 
\end{proof}

An immediate application of the theorem is the following. 

\begin{lemma} \label{ele-endotriv}
Suppose that $G$ is a unipotent abelian group scheme. Then
any endotrivial $kG$-module is isomorphic to $\Omega^n(k) \oplus Q$ for 
some $n$ and some projective module $Q$.
\end{lemma}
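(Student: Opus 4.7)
The strategy is to reduce to Dade's classification of endotrivial modules for finite abelian $p$-groups \cite{D1a,D1b}. Two observations make this reduction possible. First, by Section 2, since $G$ is a unipotent abelian group scheme, $kG$ is isomorphic \emph{as a $k$-algebra} (though in general not as a Hopf algebra) to the group algebra $kP$ of some finite abelian $p$-group $P$. Second, the conclusion of the lemma is visibly algebra-theoretic: isomorphism of $kG$-modules, the syzygy $\Omega^n(k)$ defined via a projective cover of the unique simple module, and projectivity of $Q$ all depend only on the multiplication on $kG$. Meanwhile, Theorem \ref{pi-char-endotriv} shows that the hypothesis of being endotrivial is also algebra-theoretic, since it is characterized by Jordan types of restrictions along $p$-points---information that depends only on the algebra structure.

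Combining these observations, I would take an endotrivial $kG$-module $M$ and transport it along the algebra isomorphism $kG \cong kP$ to obtain an endotrivial $kP$-module. Dade's theorem then provides an isomorphism $M \cong \Omega^n(k) \oplus Q$ as $kP$-modules, for some integer $n$ and some projective $kP$-module $Q$. Pulling back through the algebra isomorphism yields the desired decomposition as $kG$-modules. For this last step to be unambiguous, one must identify the trivial modules on the two sides: this follows from Lemma \ref{uni-dim-count}, since the local algebra $kG$ has a unique simple module---namely the head $kG/\Rad(kG)$---which must correspond under any algebra isomorphism to the unique simple $kP$-module, i.e., the trivial module.

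There is no substantial obstacle in this argument beyond invoking the three ingredients above. The main conceptual point worth highlighting is that although $kG$ and $kP$ may carry genuinely different coalgebra structures---so that, for instance, tensor products of modules differ in the two categories---the notions of endotrivial module, syzygy, and projective summand are all insensitive to the coalgebra data, which is precisely why Dade's classification transfers verbatim to the group scheme setting.
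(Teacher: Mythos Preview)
Your proposal is correct and follows essentially the same route as the paper: transport the module along the algebra isomorphism $kG \cong kE$ (with $E$ a finite abelian $p$-group), use Theorem~\ref{pi-char-endotriv} to see that endotriviality depends only on the algebra structure, and then invoke Dade's classification. Your discussion is in fact more careful than the paper's, since you explicitly justify why the trivial module, the syzygies, and projectivity transfer across the algebra isomorphism.
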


\begin{proof}
The group algebra $kG$ is isomorphic to $kE$ where $E$ is an 
abelian $p$-group. This is an isomorphism as algebras, 
but not as Hopf 
algebras. By Theorem \ref{pi-char-endotriv}, 
a $kG$-module or a $kE$-module is endotrivial 
if and only if it has constant Jordan type $[1] + n[p]$ 
or $[p-1] + n[p]$ for some $n$. 
Consequently, any endotrivial $kG$-module is also an endotrivial
$kE$-module and hence is isomorphic to $\Omega^n(k) \oplus Q$ for some
some $n$ and some projective module $Q$, by the theorem of Dade 
\cite{D1a,D1b}. 
\end{proof}

Another consequence of the theorem is the following result on restrictions.

\begin{prop} \label{restr-endotriv}
Suppose that $G$ and $H$ are finite group schemes defined over $k$
and suppose that we have a flat map $\gamma: kH \rightarrow kG$. 
If $M$ is an endotrivial $kG$-module, then the pullback along 
$\gamma$ of $M$ is an endotrivial $kH$-module. 
\end{prop}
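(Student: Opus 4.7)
The plan is to give a direct algebraic argument exploiting the flatness of $\gamma$, rather than verify the Jordan-type criterion of Theorem \ref{pi-char-endotriv} one $p$-point at a time. Since $M$ is endotrivial, there is an isomorphism of $kG$-modules
\[
\Hom_k(M,M) \cong k \oplus P
\]
with $P$ projective; applying the pullback functor $\gamma^*$ should yield the analogous isomorphism of $kH$-modules, once we check that $\gamma^*$ preserves projectivity and commutes with the $\Hom_k$ construction.

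First I would verify that $\gamma^*$ carries projective $kG$-modules to projective $kH$-modules. Flatness of $\gamma$ makes $kG$ a flat $kH$-module via $\gamma$, and since both algebras are finite dimensional this flat module is in fact free. Consequently any free $kG$-module is free as a $kH$-module, and any direct summand of a free $kG$-module (i.e.\ any projective $kG$-module) is a direct summand of a free $kH$-module upon restriction. So $\gamma^*(Q)$ is projective over $kH$ whenever $Q$ is projective over $kG$.

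Next, using that $\gamma$ is a map of Hopf algebras, I would identify the pullback $\gamma^*(\Hom_k(M,M))$ with $\Hom_k(\gamma^*(M),\gamma^*(M))$ as $kH$-modules, since on both sides the module structure is the diagonal action defined through the coproduct. Combining this with the previous step yields
\[
\Hom_k(\gamma^*(M),\gamma^*(M)) \;\cong\; \gamma^*(k) \oplus \gamma^*(P) \;\cong\; k \oplus \gamma^*(P),
\]
with $\gamma^*(P)$ projective over $kH$; hence $\gamma^*(M)$ is endotrivial.

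The main point needing care is the compatibility of $\gamma^*$ with $\Hom_k$, which is exactly where the Hopf-algebra structure enters. If one wanted to drop any Hopf hypothesis and use only flatness as an algebra map, the cleaner alternative would be to invoke Theorem \ref{pi-char-endotriv} instead and check that composing a $p$-point $\beta\colon k[t]/(t^p)\to kH$ with $\gamma$ produces a $p$-point of $kG$; the delicate step in that variant would be confirming that the image under $\gamma$ of the group algebra of the unipotent abelian subgroup scheme through which $\beta$ factors still lies in the group algebra of some unipotent abelian subgroup scheme of $G$.
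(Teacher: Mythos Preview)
Your primary argument is correct but establishes a weaker statement than the proposition claims: the identification $\gamma^*(\Hom_k(M,M))\cong\Hom_k(\gamma^*(M),\gamma^*(M))$ requires $\gamma$ to be a map of Hopf algebras, whereas the proposition assumes only that $\gamma$ is a flat map of $k$-algebras. The paper is explicit about this, remarking that the result holds ``regardless of the coalgebra structure on $kH$'', and it later applies the proposition (in Theorem~\ref{uniform-syzygy}) to inclusions of subalgebras $E_{\overline a}\hookrightarrow A$ that are not introduced as Hopf subalgebras. So as a proof of the proposition \emph{as stated}, your main line has a genuine gap at the Hopf step, which you yourself flag.

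The paper's proof is precisely the alternative you sketch in your last paragraph: given a $\pi$-point $\beta$ of $H$, compose with $\gamma$ to get a $\pi$-point of $G$; since $M$ is endotrivial over $G$, the restriction $(\gamma\circ\beta)^*(M)=\beta^*(\gamma^*(M))$ has Jordan type $[1]+n[p]$ or $[p-1]+n[p]$, and Theorem~\ref{pi-char-endotriv} then gives endotriviality of $\gamma^*(M)$ over $H$. Your instinct about the ``delicate step'' is well placed: one does need $\gamma\circ\beta$ to factor through the group algebra of a unipotent abelian subgroup scheme of $G$, and the paper simply asserts this without elaboration. In the paper's actual applications this is unproblematic---for instance the subalgebras $E_{\overline a}=\langle v_{\overline a},u_n\rangle$ of Section~4 are restricted enveloping algebras of two-dimensional abelian $p$-nil Lie subalgebras of $\mathfrak u$, hence honest unipotent abelian subgroup schemes of $U_1$---but in the abstract generality of the proposition it is the point that carries all the content.
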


\begin{proof} Observe that regardless of the coalgebra
structure on $kH$, any $\pi$-point for $H$ becomes a $\pi$-point for 
$G$ when composed with the inclusion map $kH \rightarrow kG$. 
Consequently, $M|_{kH}$ has constant Jordan type exactly as required 
for an endotrivial module by Theorem \ref{pi-char-endotriv}.
\end{proof}

The next result follows the lines of a idea of Dade \cite{D2}, 
but for the sake of completeness we present a detailed proof.  The 
result is not as strong as what we can get in the case of a group 
algebra. That is, it does not follow automatically that the group 
of endotrivial modules is finitely generated. 

\begin{thm} \label{finite-per-dim}
Suppose that $G$ is a finite unipotent group scheme. For any positive integer
$n$, there is at most a finite number of endotrivial $kG$-modules of 
dimension $n$.
\end{thm}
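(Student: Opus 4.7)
The plan is to follow Dade's strategy \cite{D2}: reduce to counting indecomposable endotrivial modules with no projective direct summands, and then exploit the rigidity implied by the Jordan-type characterization of Theorem \ref{pi-char-endotriv}. The first step uses Lemma \ref{uni-dim-count} to write any $kG$-module $M$ as $M \cong (kG)^r \oplus N$ with $N$ having no projective direct summands. Since projective summands are invisible in the stable category, $N$ is endotrivial iff $M$ is, and the relation $\dim N = n - r|G|$ bounds $r$. It therefore suffices to prove finiteness for each dimension $d \leq n$ among endotrivials without projective summands. A Krull--Schmidt argument shows that such an $N$ must be indecomposable: if $N = N_1 \oplus N_2$ with both $N_i$ non-projective, the unique non-projective indecomposable summand $k$ of $\End_k(N) = \bigoplus_{i,j} \Hom_k(N_i, N_j)$ sits in exactly one piece, and tracking where the identity lives forces some $\End_k(N_j)$ to be projective. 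This in turn forces $N_j$ itself to be projective (since $\mathrm{id}_{N_j}$ would factor through a projective module), contradicting the hypothesis.

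With $N$ indecomposable and no projective summands, Lemma \ref{uni-dim-count} gives $uN = 0$ (where $u$ generates the socle of $kG$), so $N$ is naturally a module over $kG/(u)$, and the endotrivial decomposition $\End_k(N) \cong k \oplus (kG)^{(d^2-1)/|G|}$ pins down $\dim \End_{kG}(N) = 1 + (d^2-1)/|G|$. Now fix $d$ and consider the affine variety $V_d$ parametrizing $d$-dimensional $kG/(u)$-module structures on a fixed vector space, with $GL_d$ acting by conjugation and orbits corresponding to isomorphism classes. By Theorem \ref{pi-char-endotriv}, the endotrivial locus $E \subseteq V_d$ is defined by the condition that each $p$-point $\alpha$ acts with the maximal Jordan type $[1] + m[p]$ or $[p-1] + m[p]$; these are rank conditions, and reducing to a finite Noetherian set of nilpotent generators of $kG$, $E$ is a constructible $GL_d$-invariant subset. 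Every $GL_d$-orbit in $E$ has the same codimension, namely $1 + (d^2-1)/|G|$, since the stabilizer of $N$ is $\mathrm{Aut}_{kG}(N)$ of dimension $\dim \End_{kG}(N)$.

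The main obstacle is to show that $E$ has dimension equal to this common orbit dimension, so that each irreducible component of $E$ is a single orbit and $E$ decomposes into finitely many $GL_d$-orbits. The abstract first-order deformation space $\Ext^1_{kG/(u)}(N,N)$ modulo coboundaries from $GL_d$ is in general non-zero, so rigidity must come from the additional constraint that a deformation within $E$ preserves the maximal Jordan type on every $p$-point: one needs to argue that a non-trivial infinitesimal deformation of $N$ which keeps the Jordan type maximal at each $p$-point is already tangent to the $GL_d$-orbit of $N$. In Dade's $p$-group setting this rigidity is bypassed by the explicit classification of endotrivial modules as $\Omega^n(k) \oplus \text{(projective)}$, which is unavailable for general unipotent group schemes; here the argument must proceed by a direct deformation-theoretic analysis of the constancy of maximal Jordan type along all $p$-points.
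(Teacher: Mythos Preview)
Your proposal has a genuine gap at exactly the point you yourself flag as ``the main obstacle.'' You reduce correctly to indecomposable endotrivials $N$ with $uN=0$, compute $\dim\End_{kG}(N)=1+(d^2-1)/\lvert G\rvert$, and observe that all $GL_d$-orbits in the endotrivial locus $E$ therefore have the same dimension. But equal orbit dimensions alone do not bound the number of orbits; you still need $\dim E$ to equal that common orbit dimension, i.e.\ that each orbit is open in $E$. You then say ``the argument must proceed by a direct deformation-theoretic analysis of the constancy of maximal Jordan type along all $p$-points'' --- and stop. That sentence is a description of what remains to be proved, not a proof. Controlling the tangent space of $E$ means controlling, simultaneously over all $p$-points, the first-order variation of a rank condition, and there is no evident mechanism here that forces such a deformation to be tangent to the orbit.

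The paper sidesteps this difficulty entirely by a different device: rather than analysing the locus of \emph{all} endotrivials at once, it fixes one endotrivial $M$ of dimension $n$ and shows that the set of $\gamma\in\CV_n$ with $N_\gamma\cong M$ is \emph{open} in $\CV_n$. The trick is to tensor with the fixed dual $M^*$: one has $N_\gamma\cong M$ if and only if $N_\gamma\otimes M^*\cong k\oplus (kG)^r$, which by Lemma~\ref{uni-dim-count} is equivalent to the socle generator $u$ having rank $r$ on $N_\gamma\otimes M^*$. That rank condition is the non-vanishing of at least one $r\times r$ minor of a matrix whose entries are polynomials in the coordinates of $\gamma$, hence manifestly open. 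Disjoint nonempty open sets in $\CV_n$ can meet each irreducible component at most once, so the number of isomorphism classes is bounded by the number of components. This tensor-with-$M^*$ idea is what supplies the rigidity you were unable to extract from deformation theory; your Jordan-type framework never uses the Hopf/tensor structure in this way, and that is precisely the missing ingredient.
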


\begin{proof}
We wish to consider the variety $\CV_n$ consisting of all 
representations of $kG$ of dimension $n$.
It can be described as follows. Suppose that $g_1, g_2, \dots, g_t$
is a set of generators of the algebra $kG$, so that 
every element of $kG$ can be written as a polynomial  in (noncommuting) 
variables $g_1, \dots, g_t$. Thus, $kG \cong k\langle g_1, 
\dots, g_t \rangle/\CJ$ for some ideal $\CJ$. 

The variety $\CV_n$ is defined to be the zero locus
in $k^{tn^2}$ determined by the ideal $\CJ$ of relations on the generators
$g_i$.  Specifically, the construction is the following.  
A representation of $kG$ of dimension $n$
is a homomorphism $\varphi: kG \longrightarrow M_n$ where $M_n$ is 
the algebra of $n \times n$ matrices over $k$. So for each 
$i$, the image of $g_i$ is a matrix $\varphi(g_i) = (a^i_{j,\ell})$.
In the general representation, we can consider elements 
$a^i_{j,\ell}$ to be variables or indeterminants. A relation among the 
generators of $kG$,  gives us a collection of relations among the 
variables. For example, if it were the case that $g_ig_j = 0$ 
(so that $g_ig_j \in \CJ$), then 
we would have the relations $f_{r,t} = \sum_{s=1}^n a^i_{r,s}a^j_{s,t} = 0$
for all $1 \leq r, t \leq n$. The elements $f_{r,t}$ are then elements
of an ideal $\CI \subseteq R = k[a^i_{j,\ell}]$, for  
$1 \leq i \leq t, 1 \leq j, \ell \leq n$. 
The variety $\CV_n$ can be taken to be the maximal
ideal spectrum of the ring $R/\CI$ or alternatively as the zero locus
of  $\CI$ in $k^{tn^2}$. For more information on the variety $\CV_n$ see
the paper of Dade \cite{D2}. 

Now let $u$ be a generator for the socle of $kG$. Recall that by 
Lemma \ref{uni-dim-count}, the dimension of $uM$ is the rank of the 
largest projective summand of the $kG$-module $M$.  
Suppose that $M$ is an endotrivial $kG$-module of dimension $n$. 
Then $n \equiv \pm 1\ (\text{mod $p$})$ and $M \otimes M^*
\cong k \oplus Q$ where $Q$ is projective module of dimension 
$p^ur$ where $p^u = \dim(kG)$ and $r$ is some positive integer. 
So $(\dim(M))^2 = 1 + \dim(Q)$. 

Now we choose a particular representation of $\rho: kG \longrightarrow
M_n$ of the dual module $M^*$. 
Let $\CS$ be the set of all subsets of $\FN = \{1, \dots, n\}$
having exactly $r$ elements. For any pair $S,T$ of elements of $\CS$
define $f_{S,T}: \CV_n \longrightarrow k$ by the rule that 
$f_{S,T}(\gamma) = \Det(M_{S,T}((\gamma \otimes \rho) (u)))$. Here 
$(\gamma \otimes \rho) (u)$ is the action matrix of $u$ on the module
$N \otimes M^*$ given by the tensor product of the 
representations $\gamma \otimes \rho$ where
$N$ is the module affording the representation $\gamma$. Also 
$M_{S,T}$ is the $r \times r$ submatrix whose rows are indexed by the 
elements of $S$ and whose columns are indexed by the elements of $T$. 
Finally,  $\Det$ is the determinant function. 

The important item to notice is that each $f_{S,T}$ is a 
polynomial map. Specifically, any entry of the matrix 
$(\gamma \otimes \rho) (v)$ is a polynomial in the entries of 
$\gamma(v)$ (remember $\rho$ and hence $\rho(v)$ are fixed) for  
generators $v$ of $kG$. Then $u$ is a polynomial in the generators
of $kG$. Also, $\Det$ is a polynomial function in the entries of the 
matrix. 

The result of this is that the zero locus of $f_{S,T}$ is a closed
set in $\CV_n$. Let $W_M$ be the closed set which is the intersection
of the zero loci of all $f_{S,T}$ for all $S, T \in \CS$. We claim
that if $\gamma \in \CV_n$ is not in $W_M$, then the module $N$ afforded
by $\gamma$ is isomorphic to $M$.  The reason for this is that if 
$\gamma \not\in W_M$, then the rank of $u$ on $N \otimes M^*$ is $r$. 
Therefore $N \otimes M^*$ has a direct summand isomorphic to $kG^r$.
The only way this could happen is if $N\otimes M^* \cong k \oplus kG^r$,
by a dimension argument. Hence $[N]=[M]$ as desired. 

The result we have proved says that if $M$ is an endotrivial $kG$-module
then the set of all representations isomorphic to 
$M$ is an open set in $\CV_n$
whose closure must then be a component of the variety $\CV_n$. Since
the variety has only a finite number of components, the theorem is 
proved. 
\end{proof}


\section{Unipotent finite group schemes with PBW bases.}
Throughout this section let $k$ be an algebraically closed field
of characteristic $p > 0$. In this section we wish to consider 
finite group schemes satisfying the following hypotheses.

\begin{hyp} \label{hyp1} 
We assume that $G$ is a finite unipotent group scheme having the property 
that the group algebra $A = kG$ has a set of generators $u_1, \dots, u_n$,
$n > 1$, with the properties that 
\begin{itemize}
\item[(a)] for each $i = 1, \dots, n$,  $u_i^p =0$,
\item[(b)] for each $i = 1, \dots, n-1$, the class of element $u_i$ is central in 
$kG/I_i$ where $I_i$ is the ideal generated by $u_{i+1}, \dots, u_n$,
\item[(c)] the  element $u_n$ is central in $A$, and 
\item[(d)] the dimension of $A$ is $p^n$.
\end{itemize}
\end{hyp}

An example of such a group algebra is the restricted enveloping 
algebra $u({\mathfrak n})$ of  a unipotent restricted $p$-Lie
algebra $\Fn$ such that $x^{[p]} = 0$  for every $x \in \Fn$. Such a 
Lie algebra has a basis $x_1, \dots, x_t$ such that $x_t$ is central
and corresponding to this is a basis for $A$ consisting of 
elements $u_i$ as in the hypothesis. 

It is likely that some of the conditions of Hypothesis \ref{hyp1} can be relaxed or 
altered and still imply the results that we prove later in the paper.  One of the 
things that we really need is that the algebra have a Poincar\'e-Birkhoff-Witt 
basis as in the following.

\begin{lemma} \label{conseq-hyp1}
Suppose that $A = kG$ has a collection of generators $u_1, \dots, u_n$
satisfying the conditions of Hypothesis \ref{hyp1}. Then we have the following.
\begin{itemize} 
\item[(a)] The algebra $A$ has a $k$-basis consisting of the set  
\[
S \ = \ \{u_1^{i_1} u_2^{i_2}  \cdots u_n^{i_n}  \ \vert \  0 \leq i_j < p,  \ \ j = 1 , \dots, n\}.
\]
\item[(b)] For any $s = 1, \dots, n$, the ideal $J_s = Au_{s} + \cdots + Au_n$ is a
two sided ideal and $J_s(u_s^{p-1} \cdots u_n^{p-1}) = 0$. That is, if $x$ is any 
element of $J_s$, then $xu_s^{p-1} \cdots u_n^{p-1} = 0$.
\item[(c)] Suppose that $v = a_1u_1 + \cdots + a_nu_n$ for $a_1, \dots, a_n \in k$ 
and $a_i \neq 0$ for some $i <n$. If $v^p = 0$, then $v^{p-1} \not\in Au_n$,
and the subalgebra $E = k\langle v, u_n \rangle$ generated by $v$ and $u_n$ 
is isomorphic to the group algebra of an elementary abelian $p$-group of rank 2.
\end{itemize} 
\end{lemma}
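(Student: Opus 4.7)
The plan is to prove (a), (b), (c) in that order, since each part relies on the one before; most of the work is in (a), and parts (b) and (c) are then formal consequences.

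For (a), I would show first that the $p^n$ monomials in $S$ form a spanning set by a PBW-style straightening: given any word $u_{k_1}u_{k_2}\cdots u_{k_m}$ with a descent, Hypothesis~\ref{hyp1}(b) allows one to swap adjacent out-of-order generators at the cost of a correction lying in $I_{\min(i,j)}$, and $u_i^p=0$ from Hypothesis~\ref{hyp1}(a) bounds exponents. The delicate point is to show that this rewriting procedure terminates; the cleanest way is to consider the length filtration $F_\bullet A$ and verify that commutators of generators lie in $F_1 A$, which makes $\mathrm{gr}\, A$ commutative and hence a quotient of $k[x_1,\dots,x_n]/(x_1^p,\dots,x_n^p)$. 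Hypothesis~\ref{hyp1}(d) then forces this quotient map to be an isomorphism, and the ordered monomials lift to a $k$-basis of $A$.

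For (b), I induct on $s$ downward from $n$. The base case $s=n$: $J_n = Au_n = u_nA$ is two-sided by centrality of $u_n$, and $J_n u_n^{p-1} = Au_n^p = 0$. For the step, if $J_{s+1}$ is already two-sided, then it coincides with the two-sided ideal $I_s$ generated by $u_{s+1},\dots,u_n$, and Hypothesis~\ref{hyp1}(b) gives $[u_s,A]\subseteq I_s = J_{s+1}$, so that $u_s A\subseteq Au_s+J_{s+1}$ and $J_s = Au_s + J_{s+1}$ is two-sided. For the annihilation, decompose $x\in J_s$ as $au_s + y$ with $y\in J_{s+1}$; the first summand vanishes against $u_s^{p-1}u_{s+1}^{p-1}\cdots u_n^{p-1}$ via $u_s^p=0$, and for the second summand, an iterated commutator calculation shifts $u_s^{p-1}$ past $y$ modulo $J_{s+1}$ (using two-sidedness inductively), after which the inductive hypothesis $J_{s+1}\cdot u_{s+1}^{p-1}\cdots u_n^{p-1}=0$ finishes the job.

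For (c), set $s=\min\{i:a_i\ne 0\}$; by hypothesis $s<n$. The quotient $\bar A = A/J_{s+1}$ is well-defined by (b), and in it $\bar u_s$ is central by Hypothesis~\ref{hyp1}(b) while $\bar v = a_s\bar u_s$, so $\bar v^{p-1} = a_s^{p-1}\bar u_s^{p-1}$, which is a nonzero PBW basis vector of $\bar A$ by (a). Since $Au_n\subseteq J_{s+1}$, any $v^{p-1}\in Au_n$ would force $\bar v^{p-1}=0$, a contradiction, proving the first claim. For the isomorphism $E\cong k[x,y]/(x^p,y^p)$, commutativity of $E$ is immediate from centrality of $u_n$, and there is an obvious surjection $k[x,y]/(x^p,y^p)\twoheadrightarrow E$ sending $x\mapsto v$ and $y\mapsto u_n$. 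For injectivity, the PBW basis of (a) exhibits $A$ as free of rank $p^{n-1}$ over the central subring $k\langle u_n\rangle\cong k[t]/(t^p)$. Given a relation $\sum_{i,j}c_{ij}v^i u_n^j=0$, choose the minimal $j_0$ with some $c_{i,j_0}\ne 0$, factor $u_n^{j_0}$ to the left (legal by centrality), and invoke the annihilator structure of a free $k[t]/(t^p)$-module to conclude $\sum_i c_{i,j_0}v^i\in Au_n\subseteq J_{s+1}$; reducing modulo $J_{s+1}$ yields $\sum_i c_{i,j_0}a_s^i\bar u_s^i=0$, and the linear independence of $\bar u_s^0,\dots,\bar u_s^{p-1}$ in $\bar A$ forces every $c_{i,j_0}=0$, contradicting the choice of $j_0$. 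The principal obstacle is the PBW claim in (a), since the commutativity of $\mathrm{gr}\, A$ requires knowing that commutators of generators have filtration degree one, a fact that is not obvious from the bare hypothesis and is likely handled by induction on $n$ through the quotients $A/I_{n-1}$, which inherit smaller instances of Hypothesis~\ref{hyp1}.
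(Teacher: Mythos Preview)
Your treatment of (b) and (c) is correct and close in spirit to the paper's. For (b) the paper also runs a downward induction on $s$, reducing to the identity $u_j(u_s^{p-1}\cdots u_n^{p-1})\in J_{s+1}(u_{s+1}^{p-1}\cdots u_n^{p-1})$ for $j>s$; your version with a general $x=au_s+y$ is an equivalent packaging. For (c) the paper is more economical: writing $v=a_iu_i+\cdots$ with $a_i\neq 0$, it observes $v^{p-1}=a_i^{p-1}u_i^{p-1}+w$ with $w\in J_{i+1}$ and then computes $v^{p-1}(u_{i+1}^{p-1}\cdots u_n^{p-1})=a_i^{p-1}u_i^{p-1}\cdots u_n^{p-1}\neq 0$ using (b) to kill $w(u_{i+1}^{p-1}\cdots u_n^{p-1})$. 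Since $u_n$ is central this product factors as $(v^{p-1}u_n^{p-1})(u_{i+1}^{p-1}\cdots u_{n-1}^{p-1})$, so one stroke gives both $v^{p-1}\notin Au_n$ and $v^{p-1}u_n^{p-1}\neq 0$, the latter being enough for injectivity of $k[s,t]/(s^p,t^p)\to E$. Your freeness argument over $k\langle u_n\rangle$ is a valid alternative and makes the linear independence step more explicit.

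The genuine issue is in (a). The associated-graded route does not go through under Hypothesis~\ref{hyp1} alone: condition (b) there only says $[u_i,u_j]\in I_{\min(i,j)}$, a two-sided \emph{ideal}, not the degree-one piece of the length filtration. So $\mathrm{gr}\,A$ need not be commutative, and there is no a priori surjection from $k[x_1,\dots,x_n]/(x_i^p)$ onto it. You correctly flag this as the principal obstacle and point to induction on $n$ through $A/I_{n-1}=A/Au_n$ as the fix. But that induction \emph{is} the paper's argument, and once you run it the filtration framework becomes superfluous: one shows directly that $S$ spans $A$ by applying the inductive hypothesis to $A/Au_n$ (which inherits a smaller instance of Hypothesis~\ref{hyp1}), then uses centrality of $u_n$ to peel off successive powers of $u_n$, and finally invokes the dimension hypothesis (d) to conclude $S$ is a basis. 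The paper also includes a preliminary Jordan-block count to see that $\dim(A/Au_n)\geq p^{n-1}$, so that the inductive hypothesis (with (d) weakened to an inequality) applies to the quotient. I would drop the $\mathrm{gr}$ framing and write the induction directly.
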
 

\begin{proof}
The proof of the first statement is a standard argument. That is, we proceed by an 
induction on $n$, noting that the result is true if $n=2$ since the algebra $A$ 
in that case is commutative. A brief sketch of the argument goes as follows.
First notice that the dimension of $A/Au_n$ is precisely the number of Jordan
blocks of the matrix of the action of $u_n$ on $A$. Because none of the blocks
has more than $p$ rows and the dimension of $A$ is $p^n$, we conclude that
the dimension of $A/Au_n$ is at least $p^{n-1}$. Likewise the dimension of 
$A/(Au_{n-1} +Au_n)$ is the number of Jordan blocks of the matrix of the 
action of $u_{n-1}$ on $A/Au_n$. Again, we conclude that the dimension of 
$A/(Au_{n-1} + Au_n)$ is at least $p^{n-2}$. For this reason, we assume in 
the induction that follows, that the dimension of $A$ is at least $p^n$.

Now suppose that $n >2$, and by induction assume that the result is true whenever
there are fewer than $n$ generators. Thus by the assumption the result holds
for $A/Au_n$. Then any word in $u_1, \dots, u_n$ can be rewritten as a linear
combination of elements of $S$ and a linear combination of words which all
involve $u_n$ (which we recall is central). Each of these latter words can be 
written as the product $wu_n$ where $w$ is a word in $u_1, \dots, u_{n-1}, u_n$. Hence,
by induction, it can be written as a linear combination of elements of $S$ and
words which now involve $u_n^2$. Continuing this process we see that $S$ 
spans $A$. By a dimension argument $S$ must be a $k$-basis. 

The fact that $J_s$ is a two sided ideal is a consequence of parts $(b)$ and $(c)$ 
of Hypothesis \ref{hyp1}. To prove the second statement in $(b)$, we need only 
show that $u_j(u_s^{p-1} \cdots u_n^{p-1}) = 0$ for any $j \geq s$. For $j = s$ this
is obvious. For $j>s$ we need only note that $u_ju_s^{p-1} = u_s^{p-1}u_j + w$
for some $w$ in $J_{j+1}$ by the Hypothesis. Hence,  
\[
u_j(u_s^{p-1} \cdots u_n^{p-1}) \in J_{s+1}(u_{s+1}^{p-1} \cdots u_n^{p-1}) = 0
\] 
by a downward induction on $s$. This proves $(b)$.

Now assume the hypothesis of $(c)$. 
The subalgebra $E$ is commutative, and the two generators $v$ and $u_n$ have
the property that $v^p = 0 = u_n^p$. So there is a surjective homomorphism 
$\varphi: k[s,t]/(s^p,t^p) \longrightarrow E$ with $\varphi(s) = v$ and $\varphi(t) = u_n$.
To show that this is an injection we need only show that $v^{p-1}u_n^{p-1} \neq 0$,
since $s^{p-1}t^{p-1}$ is a $k$-basis for the minimal ideal of $k[s,t]/(s^p,t^p)$.
Now we can write $v = a_iu_i +a_{i+1}u_{i+1} + \dots + a_nu_n$ where $a_i \neq 0$.
That is, we assume that $i$ is the 
first index such that $a_i$ is not zero. Then 
we have that $v^{p-1} = a_i^{p-1}u_i^{p-1} + w$ 
where $w \in J_{i+1}$. Then 
\[ 
v^{p-1}(u_{i+1}^{p-1} \cdots u_n^{p-1}) = a_i^{p-1}u_i^{p-1}u_{i+1}^{p-1} \cdots u_n^{p-1} \neq 0
\]
by parts (a) and (b) of this lemma. 
\end{proof}

In addition we require the following hypothesis in some circumstances. For notation, 
let $\CV(A)$ be the set of points $\overline{a} = [a_1, \dots, a_n]$  in projective
$k$-space $\bP^{n-1}$ such that $v = \sum_{i=1}^n a_iu_i$ has the property
that $v^p = 0$.  

\begin{hyp} \label{hyp2}
Assume that $A=kG$ satisfies Hypothesis \ref{hyp1} with the 
given notation. We assume that  every equivalence class of $p$-points
has a representative $\alpha: k[t]/(t^p) \rightarrow A$, with the form
$\alpha(t) = \sum_{i=1}^n a_iu_i$ for elements $a_1, \dots, a_n$ such
that the projective point $\overline{a} = [a_1, \dots, a_n]$ 
is in  $\CV(A)$.  Moreover, we assume that the association of the class
of $\alpha$ to the point $\overline{a}$ induces a homeomorphism 
$\phi: \pi(G) \rightarrow \CV(A)$.

Let $\hat{\CV}(A)$ be the image of the projection 
\[
\CV(A) \setminus \{[0, \dots, 0,1]\}  \longrightarrow \bP^{n-2}
\] 
obtained by sending $[a_1, \dots, a_n]$ to $[a_1, \dots, a_{n-1}]$. 
We assume further that  $\hat{\CV}(A)$ is connected.   
\end{hyp}

Note in the hypothesis, that the existence of the induced homeomorphism $\phi$
requires that $\phi(\alpha) = \phi(\beta)$ if $\alpha$ and $\beta$ are equivalent
$p$-points. In addition, the map $\alpha:k[t]/(t^p) \rightarrow A$, defined by 
$\alpha(t) = u_n$ is a $p$-point and hence the element $[0, \dots, 0, 1] \in \bP^{n-1}$
is in $\CV(A)$. 

The result of that gets us started is the following. 

\begin{thm}  \label{uniform-syzygy}
Assume that $A=kG$ satisfies Hypotheses \ref{hyp1} and \ref{hyp2}. 
Suppose that $M$ is a finitely generated endotrivial $A$-module. There
exists a number $s$ with the property that if $\alpha:k[t]/(t^p) \rightarrow A$ 
is a $p$-point such that $\alpha(t) = a_1u_1 + \dots + a_nu_n$ with 
$a_i \neq 0$ for some $i < n$ and if   
$E = \langle v, u_n \rangle$ is the subalgebra of dimension
$p^2$ generated by $v= \alpha(t)$ and $u_n$, then $M|_E \cong \Omega^s(k) \oplus
\proj$. 
\end{thm}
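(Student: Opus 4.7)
The plan is as follows. For each admissible $\alpha$, consider the subalgebra $E = \langle v, u_n\rangle$, which by Lemma \ref{conseq-hyp1}(c) is isomorphic as an algebra to the group algebra of an elementary abelian $p$-group of rank $2$. The PBW basis in Lemma \ref{conseq-hyp1}(a) shows that $A$ is free as a right $E$-module, so the inclusion $E \hookrightarrow A$ is flat. By Proposition \ref{restr-endotriv}, $M|_E$ is endotrivial over $E$, and Lemma \ref{ele-endotriv} then provides an integer $s(\alpha) \in \bZ$ with $M|_E \cong \Omega^{s(\alpha)}(k) \oplus (\mathrm{proj})$. Since $u_n \in E$, replacing $v$ by $v + \beta u_n$ leaves $E$ unchanged, so $s(\alpha)$ depends only on the projection $\bar{a} = [a_1, \dots, a_{n-1}]$ of $[a_1, \dots, a_n]$ to $\bP^{n-2}$. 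This yields a function $s : \hat{\CV}(A) \to \bZ$, and the theorem reduces to the claim that $s$ is constant. Since $\hat{\CV}(A)$ is connected by Hypothesis \ref{hyp2}, it suffices to prove $s$ is locally constant.

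For local constancy the key tool is a pair of semi-continuity estimates coming from Lemma \ref{uni-dim-count}. The socle of $E$ is one-dimensional and generated by $u_E := v^{p-1}u_n^{p-1}$, so $r(\bar{a}) := \dim(u_E \cdot M)$ equals the rank of the largest projective summand of $M|_E$. Because $u_E$ depends polynomially on $\bar{a}$, the operator rank $r$ is lower semi-continuous on $\hat{\CV}(A)$. The identity $\dim M = \dim\Omega^{s(\bar{a})}(k) + p^2 r(\bar{a})$, combined with the strict monotonicity of $\dim\Omega^s(k)$ in $|s|$ for $k(\bZ/p)^2$-modules, then translates this into upper semi-continuity of $|s|$. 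Independently, the parity of $s(\bar{a})$ is detected by the Jordan type of $M$ at the $p$-point $\alpha$: by Theorem \ref{pi-char-endotriv} this type is either $[1]+m[p]$ or $[p-1]+m[p]$, and when restricted from $M|_E \cong \Omega^{s}(k)\oplus(\mathrm{proj})$ to the $p$-point $\alpha$ inside $E$ these two alternatives correspond to $s$ being even or odd respectively. The constant-Jordan-type property forces this parity to be constant on the image of $\hat{\CV}(A)$ in $\pi(G)$.

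The main obstacle is upgrading ``upper semi-continuity of $|s|$ plus constancy of parity'' to genuine local constancy of $s$: neither piece, on its own, rules out a downward jump in $|s|$ of size two on a closed subset. To close this gap I would establish a complementary lower semi-continuity estimate for $|s|$. One natural route is to apply the same rank analysis to the dual module $M^*$ (noting $(M|_E)^* \cong \Omega^{-s(\bar{a})}(k)\oplus(\mathrm{proj})$) in tandem with a carefully chosen socle-like element that yields an \emph{upper} bound on $r(\bar{a})$. Alternatively, fix a base point $\bar{a}_0 \in \hat{\CV}(A)$ with $s_0 = s(\bar{a}_0)$, form the endotrivial module $N = M \otimes \Omega^{-s_0}(k)$, and show via a Krull--Schmidt / continuity-of-trivial-summand argument that the locus $\{\bar{a} : N|_{E_{\bar{a}}} \cong k \oplus (\mathrm{proj})\}$ is simultaneously open (presence of a trivial direct summand is an open condition in a family) and closed (by the upper semi-continuity of $|s|$ applied to $N$). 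Once local constancy is in hand, connectedness of $\hat{\CV}(A)$ yields the uniform integer $s$ asserted by the theorem.
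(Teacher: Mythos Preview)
Your setup is correct and matches the paper: for each $\bar a\in\hat\CV(A)$ you get $E_{\bar a}\cong k(\bZ/p)^2$, an integer $s(\bar a)$ with $M|_{E_{\bar a}}\cong\Omega^{s(\bar a)}(k)\oplus\proj$, and the problem reduces to showing $s$ is constant on the connected space $\hat\CV(A)$. Your semi\--continuity observation is also right: the rank of $w_{\bar a}=v_{\bar a}^{\,p-1}u_n^{\,p-1}$ on $M$ is lower semi\--continuous in $\bar a$, and via $\dim M=\dim\Omega^{s(\bar a)}(k)+p^2r(\bar a)$ together with the fact that $\dim\Omega^m(k)$ depends only on $|m|$ (and parity), this makes $|s|$ upper semi\--continuous. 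The parity remark is likewise correct and is exactly what the paper uses when it notes that, after a shift making some $m_{\bar a}=0$, all $m_{\bar a}$ are even.

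The gap is in your proposed closure of the argument. In route~(b) you claim the locus $\{s_N=0\}$ is closed ``by upper semi\--continuity of $|s_N|$'', but upper semi\--continuity says $\{|s_N|\ge 1\}$ is closed, i.e.\ $\{s_N=0\}$ is \emph{open}; you have argued openness twice and closedness not at all. Route~(a) as stated does not help either: passing to $M^*$ replaces $s$ by $-s$, so $|s|$ and hence $r(\bar a)$ are unchanged, and you get no new inequality. There is no mysterious ``socle\--like element'' that bounds $r$ from above.

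What is missing is the paper's two\--step trick. First, $s$ is bounded a~priori: for $|m|$ large, $\dim\Omega^m(k|_E)>\dim M$, so there exist $b\le B$ with $b\le s(\bar a)\le B$ for all $\bar a$ and both bounds attained. Replace $M$ by $\Omega^{-b}(M)$ so that $s(\bar a)\in[0,B]$; now $s=|s|$ and your rank argument shows each set $S_C=\{\bar a: s(\bar a)>C\}$ is \emph{closed}. Second, apply the \emph{same} rank argument not to $M^*$ but to $\Omega^{B}(M^*)$: its syzygy function on $E_{\bar a}$ is $B-s(\bar a)\in[0,B]$, so $\{\,B-s(\bar a)>B-C\,\}=\{\,s(\bar a)<C\,\}$ is closed as well. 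Thus every $S_C$ is clopen, and connectedness of $\hat\CV(A)$ forces $B=0$, i.e.\ $s$ is constant. The crucial point you did not isolate is that the dualizing step must be combined with the global shift by $\Omega^B$; without first arranging $s\ge 0$, passing to the dual gives back the same inequality you already had.
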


\begin{proof}
By the hypothesis, 
any $p$-point $\alpha: k[t]/(t^p) \rightarrow A$, has an equivalent $p$-point 
such that $\alpha(t) = v_{\ovl{a}}  = \sum_{i=1}^{n}a_i u_i$
for some element $\ovl{a} = [a_1, \dots, a_n]$ of $\CV(A)$. 
If $a_i \neq 0$ for some $i <n$, let $E_{\ovl{a}} = \langle v_{\ovl{a}}, u_n\rangle$
be the subalgebra generated by $v_{\ovl{a}}$ and $u_n$. 
We emphasize that the class of the $p$-point does not depend on the
choice of the representative. That is, the $p$-point $\beta:k[t]/(t^p) \rightarrow kG$
with $\beta(t) = ca_1u_1 + \dots + ca_nu_n$ is equivalent to $\alpha$ for any
$c \neq 0$ in $k$. For our purposes it is important that if $a_i \neq 0$ for some $i <n$,
then the subalgebras $E_{\ovl{a}}$ and $E_{c\ovl{a}}$ are the same. 
Moreover, $E_{\ovl{a}}$ is the same as $E_{\ovl{a}^\prime}$, where
$\ovl{a}^\prime = [a_1, \dots, a_{n-1}] \in \hat{\CV}(A)$. Therefore, 
for any $\ovl{a} \in \hat{\CV}(A)$. We will use the notation 
$v_{\ovl{a}}$ and $E_{\ovl{a}}$ without further explanation.

Suppose that $\ovl{a} = [a_1, \dots, a_{n-1}] \in \hat{\CV}(A) \subseteq \bP^{n-2}$.
By Lemma \ref{conseq-hyp1}(c), 
there is a $p$-point $\alpha:k[t]/(t^p) \rightarrow A$ such that
$\alpha(t) = v_{\ovl{a}}  = \sum_{i=1}^{n-1}a_i u_i$.
Let $E_{\ovl{a}} = \langle v_{\ovl{a}}, u_n \rangle$. 
As an algebra $E_{\ovl{a}}$ is isomorphic to the group algebra of
an elementary abelian $p$-subgroup of rank 2 by Lemma \ref{conseq-hyp1}(c).

Let $M|_{E_{\ovl{a}}}$ denote the 
restriction of $M$ to a module over $E_{\ovl{a}}$. As such we have that 
$$
M|_{E_{\ovl{a}}} \cong \Omega^{m_{\ovl{a}}}(k) \oplus \proj 
$$
for some integer $m_{\ovl{a}}$. This is a consequence of Proposition 
\ref{restr-endotriv} and Lemma \ref{ele-endotriv}. Our purpose, then,
is to prove that $m_{\ovl{a}}$ is a constant, independent of $\ovl{a}$. Note that 
$m_{\ovl{a}} = m_{c\ovl{a}}$ for any nonzero $c \in k$. 

Next we observe that there exist numbers $b$ and $B$ such 
that $b \leq m_{\ovl{a}}  \leq B$. The reason is that for $m$ sufficiently large
we have
$$
\dim \Omega^m(k|_E) \ = \ \dim \Omega^{-m}(k|_E) \ >  \ \dim M
$$
for $E \cong E_{\ovl{a}}$ for any $\ovl{a}$. Now choose $b$ and $B$ so that for 
some $a$, $m_{\ovl{a}} = b$ and for some $\ovl{a}'$,  $m_{\ovl{a}'} = B$. Next we replace
$M$ by $\Omega^{-b}(M)$. Then we can assume that $b = 0$, and for every
$\ovl{a} \in \hat{\CV}(A) \subseteq \bP^{n-2}$, we have 
that $0 \leq m_{\ovl{a}} \leq B$. We are assuming 
further that for some $\ovl{a}$, $m_{\ovl{a}} = 0$, while for another, $m_{\ovl{a}} = B$. 

Let $C$ be any integer such that $0 < C < B$. Let 
$$
S_C = \{\ovl{a} \in \CV(\Fu) \setminus \{0\} \vert \ m_{\ovl{a}} > C\}.
$$ 
Our claim is that the set $S_C$ is closed in the Zariski topology of 
$\hat{\CV}(A)$.  The demonstration
that $S_C$ is closed is the main step and will occupy the next couple of 
paragraphs in the proof of the proposition.

Observe, that since $m_{\ovl{a}} = 0$ for some $\ovl{a}$, we must have that 
$\dim M \equiv 1 \ (\text{mod $p^2$})$. This implies that for all $\ovl{a}$,
$m_{\ovl{a}}$ is even. We also know that for $E \cong E_{\ovl{a}}$, any $\ovl{a}$, 
$\dim \Omega^{2s}(k|_E) = 1 + sp^2$ for any $s > 0$. So let 
$$
r = (\dim M - \dim \Omega^{2c}(k))/p^2
$$
where $c = C/2$ if $C$ is even and $c = (C-1)/2$ otherwise (so that in either
case $2c \leq C$ and $2c+2 > C$). The 
important thing to notice is that for any $a$, the statement
that $m_{\ovl{a}} < C$ means that the dimension of the projective part of 
$M_{E_{\ovl{a}}}$ is 
$$
\dim M - \dim \Omega^{m_{\ovl{a}}} (k) \ \geq \ p^2r.
$$
That is, if $m_{\ovl{a}} \leq C$, then $M_{E_{\ovl{a}}}$ 
has an $kE_{\ovl{a}}$-free summand of rank
at least $r$. In such a case, the rank of the matrix of the element
$w_{\ovl{a}} = v_{\ovl{a}}^{p-1}u_n^{p-1}$ (which generates the socle of $kE_{\ovl{a}}$) is 
at least $r$, by Lemma \ref{uni-dim-count}. On the other hand, by
a similar dimension argument, if $m_{\ovl{a}} > C$, then 
$M_{E_{\ovl{a}}}$ has no $E_{\ovl{a}}$-free summand of rank $r$, and the rank of
the matrix of $w_{\ovl{a}}$ is strictly less than $r$.  So we have that 
$\ovl{a} \in S_C$ if and only if the rank of $w_{\ovl{a}}$ is strictly less than
$r$. 

Now we follow the argument of Theorem \ref{finite-per-dim}.
Let $\CW$ be the set of all 
$a = (a_1, \dots, a_{n-1}) \in k^{n-1}$ such that
if $a \neq 0$, then $\ovl{a} = [a_1, \dots, a_{n-1}] \in \hat{\CV}(A)$. 
Let $d = \dim(M)$ and let 
$\CS$ be the set of all subsets of $\FN = \{1, \dots, d\}$
having exactly $r$ elements. For any pair $S,T$ of elements of $\CS$
let $f_{S,T}: \CW \longrightarrow k$ be given by
$$
f_{S,T}(a) = \Det(M_{S,T}(w_a))
$$
for $w_{a} = (a_1u_1 + \dots + a_{n-1}u_{n-1})^{p-1}u_n^{p-1}$ and 
$a = (a_1, \dots, a_{n-1})$. 
Here, $M_{S,T}(w_{a})$ is the $r \times r$ submatrix of the matrix of $w_a$ 
on the module $M$ having rows indexed by $S$ and columns indexed 
by $T$. It can be checked that the functions $f_{S,T}$ are homogeneous polynomial 
maps in $a_1, \dots, a_{n-1}$. 
Hence, we have a polynomial map
$$
f = \displaystyle\Pi_{S,T\in \CS} \ \ f_{S,T}: \CW
\longrightarrow k^{{\binom{d}{r}}^2}
$$
with the property that $f(a) = 0$ if and only if $\ovl{a} \in S_C$. 
where $\ovl{a} = [a_1, \dots, a_{n-1}]$ is the corresponding element of $\hat{\CV}(A)$.
Because the zero locus of $f$ in $\CW$ is a homogeneous closed set, we have that  
$S_C$ is closed in $\hat{\CV}(A)$. 

Now we claim that for any such number $C$, $S_C$ is also open
in $\hat{\CV}(A)$. To see this
fact, we replace $M$ by it dual $M^*$. 
Recall, that for any $t$, $(\Omega^t(k))^* \cong \Omega^{-t}(k)$.
For $M^*$ the values of $m_{\ovl{a}}$ 
are all between $-B$ and 0. Hence the next thing is to replace
$M^*$ by $\Omega^B(M^*)$ so that the values of $m_{\ovl{a}}$ for this module
are all between 0 and $B$. But note that we have reversed the ordering.
That is, for any $\ovl{a}$, we have that 
$$
M|_{E_{\ovl{a}}} \cong \Omega^{m_{\ovl{a}}}(k) \oplus \proj ,
$$
and so 
$$
(\Omega^B(M^*))|_{E_{\ovl{a}}} \cong  \Omega^{B-m_{\ovl{a}}}(k) \oplus \proj. 
$$
So that $S_C$ on $M$ becomes the complement of $S_{B-C}$ in $\hat{\CV}(A)$.
Hence, $S_C$ is both open and closed. Therefore, by the 
connectedness of $\hat{\CV}(A)$ (Hypothesis \ref{hyp2})
we have that $B = 0$, and the proposition
follows. 
\end{proof}

For the main result of this section we need  a third hypothesis. 

\begin{hyp} \label{hyp3}
Suppose that $G$ is a finite group scheme defined over $k$ and that the 
algebra $A = kG$ satisfies Hypotheses \ref{hyp1} and \ref{hyp2}. 
We assume further that the ideal $Au_n  = kGu_n$ generated by $u_n$ is a 
Hopf ideal so that $A/Au_n$ is a Hopf algebra and the group algebra
of a finite group scheme.  Moreover, we suppose that for any $p$-point 
$\alpha_k: k[t]/(t^p) \rightarrow A/Au_n$ one of the following 
two things happens.
\begin{itemize}
\item[(a)] There exists a $p$-point $\beta: k[t]/(t^p) \rightarrow A$ such that
the composition 
\[
k[t]/(t^p) \longrightarrow kG \longrightarrow A/Au_n 
\] 
is a $p$-point equivalent to $\alpha$.
\item[(b)] There exist an element $x \in A$ such that $x 
+ \langle u_n \rangle = \alpha(t) \in A/Au_n $, and 
we have that $x^p = yu_n$, where $y$ is a unit in $A$.
\end{itemize}
\end{hyp}

This last hypothesis seems somewhat unusual, but it is exactly what is needed. 
It allows us to prove the next theorem which is the main result of this section. 

\begin{thm} \label{char-endotriv}
Suppose that $A$ is an algebra which satisfies Hypotheses \ref{hyp1},
\ref{hyp2} and \ref{hyp3}.
Then the group of endotrivial modules for $A$ has the form
$T(A) \cong \bZ$ and is generated by the class of $\Omega(k)$.
\end{thm}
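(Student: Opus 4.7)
The plan is to normalize $M$ using Theorem \ref{uniform-syzygy}, then use Hypothesis \ref{hyp3} to handle the ``missing'' direction in the support variety, and finally close by descent to $A/Au_n$ with an induction on $n$.

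So let $M$ be an endotrivial $A$-module. Theorem \ref{uniform-syzygy} produces an integer $s$ with $M|_{E_{\ovl a}}\cong\Omega^{s}(k)\oplus\proj$ for every $\ovl a\in\hat{\CV}(A)$. Replacing $M$ by $\Omega^{-s}(M)$ shifts $[M]$ only inside $\bZ\cdot[\Omega(k)]$, so I may assume $M|_{E_{\ovl a}}\cong k\oplus\proj$, and the task reduces to showing $M\cong k\oplus\proj$ as $A$-modules. Because $k\langle u_n\rangle\subset E_{\ovl a}$ and $E_{\ovl a}$ is free over $k\langle u_n\rangle$, restricting further gives $M|_{k\langle u_n\rangle}\cong k\oplus\text{(free)}$; in particular $u_n$ acts on $M$ with Jordan type $[1]+r[p]$ for some $r\geq 0$.

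Next I would handle the $p$-points of $A/Au_n$ that do not lift to $A$. For such $\alpha$, Hypothesis \ref{hyp3}(b) supplies $x\in A$ with $x^p=yu_n$ and $y\in A^{\times}$, so the subalgebra $F=k\langle x\rangle\subset A$ is cyclic of order $p^2$. Every endotrivial $F$-module lies in one of the two classes of $T(F)\cong\bZ/2\bZ$, and these are distinguished by the Jordan type of $u_n=y^{-1}x^p$: a direct computation on $xF$ shows that $u_n$ contributes a block of size $1$ to the type on $k\oplus\proj$ but a block of size $p-1$ to the type on $\Omega_F(k)\oplus\proj$. Hence the Jordan type $[1]+r[p]$ established above forces $M|_F\cong k\oplus\proj$.

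Now form the $A$-submodule $M_0:=\ker(u_n\cdot\colon M\to M)$. Centrality of $u_n$ and its trivial action on $M_0$ make $M_0$ a module over the Hopf quotient $A/Au_n$. For each $p$-point $\alpha$ of $A/Au_n$---case (a) of Hypothesis \ref{hyp3} handled by the first step, case (b) by the second---the pullback of $M_0$ along $\alpha$ has the Jordan type demanded by Theorem \ref{pi-char-endotriv}, so $M_0$ is endotrivial over $A/Au_n$. Closing by induction on $n$, with base case $n=2$ following from Lemma \ref{ele-endotriv} and Dade's classification, yields $M_0\cong k\oplus Q$ over $A/Au_n$. The resulting summand $k\hookrightarrow M_0\hookrightarrow M$ then extends to a direct summand of $M$ as an $A$-module by an argument using the centrality of $u_n$ and the one-dimensional socle of $A$ (Lemma \ref{uni-dim-count}).

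The main obstacle is closing the induction: one must verify that $A/Au_n$ itself satisfies the (reindexed) analogues of Hypotheses \ref{hyp2} and \ref{hyp3}. The projected variety for $A/Au_n$ is essentially already present inside $\hat{\CV}(A)$, so connectedness carries over, but Hypothesis \ref{hyp3} is more subtle because the $p$-point lifting property must be controlled at each layer of the tower $A\supset A/Au_n\supset\cdots$. In the applications of later sections (Frobenius kernels of unipotent radicals) this tower is the lower central series of a unipotent group, for which the required lifting is automatic; abstractly, one may either build iterability into the hypothesis or replace the induction by a direct cohomological argument that constructs the splitting $M\twoheadrightarrow k$ from the Jordan-type controls of the first two steps.
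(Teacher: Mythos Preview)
Your normalization via Theorem \ref{uniform-syzygy} and your treatment of the cyclic subalgebra $F=k\langle x\rangle$ in case (b) of Hypothesis \ref{hyp3} match the paper's argument closely. The divergence, and the source of your acknowledged gap, is the choice of auxiliary $\hat A$-module and what you try to prove about it.

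You take $M_0=\ker(u_n)$ and aim to show it is \emph{endotrivial} over $\hat A=A/Au_n$; this forces you into an induction on $n$, and as you correctly observe, Hypotheses \ref{hyp2} and \ref{hyp3} for $A$ do not obviously descend to $\hat A$. The paper sidesteps this entirely by working instead with the image $\hat M=u_n^{p-1}M$ and proving only that $\hat M$ is \emph{free} over $\hat A$. Freeness is detected purely by $p$-points (projectivity criterion), so no inductive hypothesis on $\hat A$ is needed: for a case-(a) $p$-point $\alpha$ with lift $\beta$, one has $M|_{E_{\ovl a}}\cong k\oplus P$ with $P$ free, whence $u_n^{p-1}M=u_n^{p-1}P$ is free along $\beta$ and hence along $\alpha$; for case (b), the same $M|_F\cong k\oplus\proj$ you established gives the analogous conclusion.

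Once $\hat M$ is free over $\hat A$, the finish is a one-line dimension count rather than a splitting argument: the socle of $\hat A$ is generated by $w=u_1^{p-1}\cdots u_{n-1}^{p-1}$, so $\dim\hat M=p^{n-1}\dim(w\hat M)$; but $w\hat M=(wu_n^{p-1})M=yM$ where $y$ generates $\operatorname{Soc}(A)$, so Lemma \ref{uni-dim-count} produces a free $A$-summand of $M$ of dimension $p^n\dim(yM)=p\dim\hat M=\dim M-1$, forcing $M\cong k\oplus\text{(free)}$. Your final ``extend $k\hookrightarrow M_0\hookrightarrow M$ to a summand'' step is also left vague; the image-side approach replaces it with this explicit socle computation. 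In short, the missing idea is to prove freeness rather than endotriviality of the auxiliary module, which removes the need for any descent of the hypotheses.
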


\begin{proof}
Let $M$ be an endotrivial $A$-module, and suppose that the 
restriction $M|_E \cong \Omega^m(k_E) \oplus \proj$ 
where $E$ is the subalgebra 
generated by $u_{n-1}$ and $u_n$. Our objective is to prove that 
$M \cong \Omega^m(k) \oplus \proj$. This is equivalent to proving that
$\Omega^{-m}(M) \cong k$, since $\Omega^{-m}(M)$ is indecomposable.
Hence, for the proof we replace $M$ by $\Omega^{-m}(M)$ and we 
can assume that $M|_E \cong k|_E \oplus \proj$. 

Our first objective is to
demonstrate that the module $\hat{M} = u_n^{p-1}M$ is a free 
module over $\hat{A} = A/Au_n$. For this purpose we use the method
of $p$-points. Specifically, we show that if $\alpha:k[t]/(t^p)  \rightarrow 
\hat{A}$ is a $p$-point, then the restriction $\alpha^*(\hat{M})$
is free module over $k[t]/(t^p)$. This is sufficient to prove that $\hat{M}$ is a 
projective $\hat{A}$-module.

Suppose that $\alpha$ is a $p$-point as above. We must consider the two
situations of Hypothesis \ref{hyp3} separately. First suppose that we are in 
situation (a). In this case $\alpha$ is equivalent to $q\circ \beta$ where 
$\beta:k[t]/(t^p) \rightarrow A$ is a $p$-point and 
$q: A \rightarrow A/Au_n$ is the quotient map.  By Hypothesis 
\ref{hyp2} we may assume that $v = \beta(t) = a_1u_1 + \dots + a_nu_n$ where 
$a_i \neq 0$ for some $i < n$. Let $E = \langle v, u_n \rangle$. By Theorem
\ref{uniform-syzygy} we have that $M \vert_E \cong k \oplus P$  where $P$ is
a free $E$-module. As a consequence, $u_n^{p-1}M = u_n^{p-1}P$ is a free module 
on restriction to $k[t]/(t^p)$ along the map $\beta$. But then by the equivalences
of $p$-points $\alpha^*(\hat{M})$ is also a free module. 

Now consider the situation (b) in Hypothesis \ref{hyp3}. 
Let $x \in A$ be as in the statement of (b), and let $H = \langle x \rangle$ be 
the subalgebra generated by $x$. Notice that $x^{p^2} = y^pu_n^p = 0$, while
$x^{p^2-1} = y^{p-1}x^{p-1}u_n^{p-1} \neq 0$ since $x^{p-1}$ is not in the ideal 
generated by $u_n$ (see Hypothesis \ref{hyp1}). Consequently, the subalgebra
$H$ is isomorphic to $k[t]/(t^{p^2})$, which is the group algebra of a cyclic group
of order $p^2$ and hence is a finite group scheme. For any $p$-point 
$\gamma: k[t]/(t^p) \rightarrow H$ we must have that $\gamma(t) = zx^p = zyu_n$ 
where $z \in U$ is a unit. That is, there is only one equivalence class of $p$-points. 
However, we know that the restriction of $M$ to the subalgebra generated by 
$u_n$ is  a trivial module plus a projective module. So the Jordan type of 
$\gamma(t)$ on $M$ must be the same as that of $u_n$ on $M$ which is 
$s[p] + [1]$ for some $s$. Hence, by Theorem \ref{pi-char-endotriv}, the restriction
to $H$ of $M$ is an endotrivial module. Because the dimension of $M$ is 
congruent to 1 modulo $p^2$ we know that $M \cong k \oplus P$ for some projective
module $P$. That is, we recall that $\Omega^2(k\vert_H) \cong k$ and that the 
dimension of $\Omega(k\vert_H)$ is congruent to -1 modulo $p^2$. From this 
we get that $\alpha^*(\hat{M})$ is projective as in the previous case. As 
noted above, we have now shown that $\hat{M}$ is a free $\hat{A}$-module.

Now note that the socle of $\hat{A}$ is generated by the element
$w = u_1^{p-1} \dots u_{n-1}^{p-1}$. Hence the dimension of 
$\hat{M}$ is $p^{n-1}\dim(w\hat{M})$. Now $y = u_n^{p-1}w$ is a 
generator for the socle of $A$ and $w\hat{M} = yM$. Therefore, 
the module $M$ has a free submodule $Q$ of dimension $p^n \dim(yM)$.
Because $p \dim(\hat{M}) = \dim(M) -1$ we must have that 
$M \cong k \oplus Q$ by Lemma \ref{uni-dim-count}. 
\end{proof}

\begin{rem}
The importance of Hypothesis \ref{hyp2} is highlighted in the following
example. Suppose that $U_1$ is the first infinitesimal Frobenius kernel of 
the unipotent subgroup of the algebraic group $G = SL_3$. That is, 
$kU_1$ is the restricted enveloping restricted $p$-Lie algebra of 
upper triangular $3 \times 3$ matrices. Then $kU_1$ satisfies 
Hypothesis \ref{hyp2} except in the case that $p = 2$. It is generated
by elements $u_1, u_2$ and $u_3$ which satisfy the equations
$$
u_1^p = 0,  \
u_2^p = 0,  \
u_3^p = 0,  \quad 
\text{and}  \quad u_3 = u_1u_2 - u_2u_1. 
$$
In the case that $p >2$, we have that $(au_1 + bu_2)^p = 0$, hence 
Hypothesis \ref{hyp2} holds. However, if $p=2$, this is not the case. 
Indeed, if $p=2$ then the last equation can be replaced by 
$(u_1u_2)^2 = (u_2u_1)^2$. These are the equations of the group 
algebra for a dihedral group. Consequently, $kU_1$ is isomorphic
to the mod-2 group algebra of a dihedral group of order 8, at least
as an algebra, though perhaps not as a Hopf algebra. Again we
recall from \cite{CFP} that the group of  endotrivial module does 
not depend on the coalgebra structure. Therefore, $T(kU_1)
\cong \bZ \oplus \bZ$ (see \cite{CT1}).
\end{rem} 


\section{Infinitesimal unipotent subgroups of algebraic groups}

The goal for the remainder of the paper is to investigate 
the properties of endotrivial modules for infinitesimal 
group schemes obtained by taking the Frobenius kernels 
of closed subgroup schemes of semisimple algebraic group schemes. 
In this section, we investigate some situations in which the 
unipotent subschemes satisfy the conditions of the last section.

Suppose that  $G$ is  a semisimple simply connected 
algebraic group defined and split over the finite 
field ${\mathbb F}_p$ with $p$ elements for a
prime $p$. Here  $k$ is the algebraic closure of ${\mathbb F}_p$.  Let
$\Phi$ be a root system associated to $G$ with respect to a maximal
split torus $T$. Let $\Phi^{+}$ (resp. $\Phi^{-}$) be the set of positive
(resp. negative) roots and $\Delta$ be a base consisting of simple roots. 
Let $B$ be a Borel subgroup containing $T$ corresponding 
to the negative roots and let $U$ denote 
the unipotent radical of $B$. 

If $H$ is an affine algebraic group scheme 
over $k$ and let $H_{r}=\text{ker }F^{r}$. 
Here $F:H\rightarrow H^{(1)}$ is the Frobenius map
and $F^{r}$ is the $r^{th}$  iteration of the Frobenius map.
We note that there is a categorical equivalence between restricted
$\text{Lie}(H)$-modules and $H_{1}$-modules.
For each value of $r$, the group algebra $kH_r$ is the distribution 
algebra $\Dist(H_r)$ (see \cite{Jan}). In general, for the rest of this 
paper, we use $\Dist(H_{r})$ to denote the group algebra of $H_r$. 

\begin{thm} \label{unipotent1}
Let $U$ be the unipotent radical of a Borel subgroup. For any $r \geq 1$, the algebra 
$\Dist(U_r)$ has a collection of elements $u_1, \dots, u_n$
satisfying the conditions of Hypothesis \ref{hyp1}. Here, $n = r\vert \Phi^+ \vert$.
\end{thm}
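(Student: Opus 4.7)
The plan is to take as generators the divided powers $X_\alpha^{(p^j)}$ of the Chevalley root vectors, for $\alpha \in \Phi^+$ and $0 \leq j \leq r-1$. I will order the positive roots by any total order $\alpha_1 \prec \alpha_2 \prec \cdots \prec \alpha_m$ refining height, so that $\operatorname{ht}(\alpha_i) \leq \operatorname{ht}(\alpha_j)$ whenever $i \leq j$ and $\alpha_m$ is the highest root, and then list the $n = r|\Phi^+|$ generators in blocks of size $r$:
\[
u_{(i-1)r + j + 1} \;:=\; X_{\alpha_i}^{(p^j)} \qquad (1 \le i \le m,\ 0 \le j \le r-1),
\]
so that $u_n = X_{\alpha_m}^{(p^{r-1})}$.

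Clause (d) of Hypothesis \ref{hyp1} will follow immediately from the standard PBW theorem for the hyperalgebra of an infinitesimal unipotent group (see Jantzen \cite{Jan}, I.7), which gives $\dim \Dist(U_r) = p^{r|\Phi^+|} = p^n$ by exhibiting the basis of ordered monomials $\prod_\alpha X_\alpha^{(a_\alpha)}$ with $0 \leq a_\alpha < p^r$. For clause (a), the divided-power product formula gives $(X_\alpha^{(p^j)})^p = \binom{p^{j+1}}{p^j,\dots,p^j}\, X_\alpha^{(p^{j+1})}$; by Kummer's theorem, the $p$-adic valuation of this multinomial equals the number of carries incurred in adding $p^j$ to itself $p$ times, namely $1$, so the coefficient vanishes in $k$ and $u_l^p = 0$ for every $l$. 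For clause (c), the highest root satisfies $\alpha_m + \beta \notin \Phi$ for every $\beta \in \Phi^+$, and the Chevalley commutator formula then forces every divided power $X_{\alpha_m}^{(p^j)}$ to commute with every other generator, so $u_n$ is central.

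Clause (b) is the substantive step. Fixing $u_i = X_\alpha^{(p^s)}$ with $\alpha = \alpha_k$, I need only verify $[u_i, u_l] \in I_i$ for every generator $u_l$, since the $u_l$'s generate $\Dist(U_r)$. For $l > i$ this is automatic, as $u_l \in I_i$ and $I_i$ is a two-sided ideal. For $l < i$, write $u_l = X_\beta^{(p^t)}$; by the ordering either $\beta = \alpha$, and then divided powers of the same root vector commute, or $\beta \prec \alpha$. In the latter case the Chevalley commutator relations in $\Dist(U)$ (see Jantzen \cite{Jan}) express
\[
X_\alpha^{(p^s)} X_\beta^{(p^t)} - X_\beta^{(p^t)} X_\alpha^{(p^s)} \;=\; \sum_\nu c_\nu \, X_{\gamma_{\nu,1}}^{(e_{\nu,1})} \cdots X_{\gamma_{\nu,q_\nu}}^{(e_{\nu,q_\nu})},
\]
where in each summand at least one factor $X_{\gamma_{\nu,i}}^{(e_{\nu,i})}$ has $\gamma_{\nu,i} = a\alpha + b\beta$ with $a, b \geq 1$, the remaining factors being divided powers of $\alpha$ and $\beta$. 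Any such root $\gamma = a\alpha + b\beta$ satisfies $\operatorname{ht}(\gamma) \geq \operatorname{ht}(\alpha) + \operatorname{ht}(\beta) > \operatorname{ht}(\alpha)$, so $\alpha \prec \gamma$. Writing $e$ in base $p$ as $e = \sum_\mu p^{j_\mu}$, repeated application of Lucas's theorem shows $X_\gamma^{(e)}$ is a unit scalar multiple of $\prod_\mu X_\gamma^{(p^{j_\mu})}$, and by the choice of ordering each such factor is some $u_{i'}$ with $i' > i$. Every summand therefore lies in the two-sided ideal $I_i$, and (b) follows.

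The main obstacle is the careful deployment of the Chevalley commutator formula in the divided-power setting: I need to confirm that every correction term carries at least one factor of strictly greater height than $\alpha$ (which rests on the elementary root-theoretic fact $\operatorname{ht}(a\alpha + b\beta) > \operatorname{ht}(\alpha)$ when $b \geq 1$), and to handle the elementary but finicky base-$p$ rewriting that converts arbitrary divided powers back into products of the chosen $p$-power-indexed generators.
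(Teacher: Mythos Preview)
Your argument is correct and follows the same strategy as the paper: take the divided-power generators $X_\alpha^{(p^j)}$, order them compatibly with root height, and verify (a)--(d) via the Chevalley commutator relations. The paper's proof is considerably terser---calling (a) a ``standard calculation'' and asserting (b), (c) directly from the height ordering---whereas you spell out the $p$-adic valuation for (a) and the commutator analysis for (b); your block-by-root ordering is simply a specific choice within the family of height-compatible orderings the paper permits.
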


\begin{proof}
For any root $\alpha \in \Phi^+$, the distribution algebra of the 
infinitesimal root subgroup $(U_{\alpha})_r$ is a divided powers
algebra generated by elements 
\[
v_{\alpha,i} = \frac{x^{p^{i-1}}_{\alpha}}{p^{i-1}} \qquad \text{for} \quad i \ = \ 1, \dots, r.
\]
By convention we will set $v_{\alpha}=v_{\alpha,1}$. Hence, 
we let $n = r\vert \Psi^+ \vert$ and let $u_1, \dots, u_n$ be the 
elements $v_{\alpha,i}$ for $\alpha \in \Psi^+$ 
and $i = 1, \dots, r$, ordered in such
a way that for $u_a = v_{\alpha,i}$ and $u_b = v_{\beta,j}$, we have that 
$a < b$ if the height of the root $\alpha$ is 
less than the height of the root $\beta$.
We know that for any positive roots $\alpha$ and $\beta$, the commutator
$v_{\alpha,i}v_{\beta,j}- v_{\beta,j}v_{\alpha,i}$ 
is contained in the distribution
algebra of the root subgroup $U_{\alpha+\beta}$ or is zero if $\alpha+\beta$ is
not a root. In particular, that Condition (a) of 
Hypothesis \ref{hyp1} is satisfied
is a standard calculation. Conditions 
(b) and (c) of Hypothesis \ref{hyp1} hold because the elements of collection 
$u_1, \dots, u_n$ are ordered by increasing 
height of the roots.  In particular,  
$u_n = v_{\alpha,j}$ where  $\alpha$ has maximal height. Finally, Condition
(d) is well known to be satisfied for $\Dist(U_r)$. 
\end{proof}

Our next goal is to determine when $T(U_1) \cong \bZ$. 
Theorem \ref{char-endotriv} identifies the 
problem very clearly. We need to show that, except 
in the cases mentioned, Hypothesis \ref{hyp2} 
is satisfied. Throughout the proof, we assume the notation 
of the proof of Theorem \ref{unipotent1}.
In particular, the elements $u_1, \dots, u_n$ have the form $v_{\alpha,j}$ 
with the given ordering. 

The first thing to notice is that the distribution 
algebra $\Dist(U_1)$ is isomorphic 
to the restricted enveloping algebra of the 
restricted $p$-Lie algebra $\FU$ of $U$. 
The restricted enveloping algebra is defined to be the quotient $\Ten(\FU)/J$
where $\Ten(\FU)$ is the free tensor algebra 
\[
\Ten(\FU) = k \ \ \oplus \ \ \FU \ \ \oplus \ \ \FU \otimes \FU \ \  \oplus \ \ 
\FU \otimes \FU \otimes \FU \ \  \oplus \ \ \ldots 
\] 
and the ideal $J$ is generated by all elements of the form
\[
u \otimes v - v\otimes u  - [u,v] \quad 
\text{and} \quad u^{\otimes p} - u^{[p]}
\]
for all $u,v \in \FU$. Here $u \rightarrow u^{[p]}$ is the $p$-power operation 
on the restricted Lie algebra $\FU$. Now by \cite{Jan1}, we know that
spectrum of $\text{H}^*(u(\FU),k)$ is the restricted nullcone
\[
\CN_1(\FU) \ = \ \{u \in \FU \ \vert \ u^{[p]} = 0\}
\] 
where we are thinking of $\FU$ as a subspace of $\Dist(U_1)$. That is, every
class of $p$-points is represented by a $p$-point having the form
$\alpha: k[t]/(t^p) \longrightarrow \Dist(U_1)$, with $\alpha(t) \in \FU$.
As a result, we may assume that 
$\CV(A) = \CN_1(\FU) \subset \FU$. The elements 
$u_1, \dots, u_n$ can be taken to be a $k$-basis of $\FU$. 

Notice that if the prime $p$ is larger than the 
Coxeter number, then $\hat{\CV}(A) = k^{n-2}$ 
which is connected. Hence, we need only worry 
about small prime characteristics. 

\begin{prop}\label{patleast3}
If $p \geq 3$, then $A = \Dist(U_1)$ satisfies Hypothesis \ref{hyp2}.
\end{prop}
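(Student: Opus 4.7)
The plan is to verify the two parts of Hypothesis~\ref{hyp2} separately. The homeomorphism $\phi : \pi(G) \xrightarrow{\sim} \CV(A) = \CN_1(\FU)$ is provided by the standard theory of $p$-points for infinitesimal group schemes of height one and is already the content of the paragraphs preceding the proposition: every equivalence class of $p$-points has a representative with $\alpha(t) \in \FU$, and the map $\alpha \mapsto [\alpha(t)]$ is the desired homeomorphism. So the substantive task is to prove $\hat{\CV}(A)$ is connected for $p \geq 3$.

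The first reduction uses the highest root vector $u_n = x_{\tilde{\alpha}}$. Since $\tilde{\alpha} + \beta \notin \Phi$ for any $\beta \in \Phi^+$, the element $u_n$ is central in $\FU$; since $p\tilde{\alpha}$ is not a root, $u_n^{[p]} = 0$ in the Chevalley basis. Jacobson's formula for $[p]$-th powers of commuting elements then gives
\[
(w + a u_n)^{[p]} = w^{[p]} + a^p u_n^{[p]} = w^{[p]}
\]
for all $w \in \FU$ and $a \in k$. Hence $[a_1 : \cdots : a_{n-1}] \in \bP^{n-2}$ lies in $\hat{\CV}(A)$ if and only if $w := \sum_{i<n} a_i u_i$ satisfies $w^{[p]} = 0$, so
\[
\hat{\CV}(A) \;=\; \bP\bigl(\CN_1(\FU) \cap \FU'\bigr), \qquad \FU' := \mathrm{span}(u_1, \ldots, u_{n-1}).
\]

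To prove this projectivization is connected, I would first observe that each root vector $x_\alpha$ with $\alpha \in \Phi^+ \setminus \{\tilde{\alpha}\}$ lies in $\CN_1(\FU) \cap \FU'$, since $x_\alpha^{[p]} = 0$. The key linking step is: for roots $\gamma, \delta \in \Phi^+ \setminus \{\tilde{\alpha}\}$, the 2-plane $kx_\gamma + kx_\delta$ is contained in $\CN_1(\FU)$ whenever no root of the form $m\gamma + l\delta$ with $m + l = p$ and $m, l \geq 1$ exists in $\Phi$. This follows from Jacobson's formula, whose correction terms $s_i(cx_\gamma, dx_\delta)$ are linear combinations of root vectors $x_{m\gamma + l\delta}$ of exactly such degrees. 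For $p \geq 3$ in the simply-laced types $A, D, E$, the condition holds for every pair (length-$\leq 2$ root strings force $m\gamma + l\delta \notin \Phi$ whenever $m + l \geq 3$, $m, l \geq 1$), so each projective line $\bP(kx_\gamma + kx_\delta)$ directly links $[x_\gamma]$ and $[x_\delta]$ inside $\hat{\CV}(A)$. For the remaining pairs in non-simply-laced types $B, C, F, G$, one inserts an intermediate root $\epsilon \in \Phi^+ \setminus \{\tilde{\alpha}\}$ (typically $\gamma + \delta$ or a suitable height-two root) to form a chain of valid linking planes. Finally, any closed point $[w] \in \hat{\CV}(A)$ is linked to some root-vector class by the $1$-parameter torus action $\rho(t) \cdot x_\gamma = t^{h(\gamma)} x_\gamma$, which preserves both $\CN_1(\FU)$ and $\FU'$ and degenerates $w$ as $t \to 0$ onto its minimum-height support; iterating the construction connects the limit to a single $[x_\alpha]$.

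The main obstacle is the explicit case analysis of root-pair linkings in the non-simply-laced types, where the direct plane argument fails for a handful of pairs and must be replaced by a chain through an intermediate root. The hypothesis $p \geq 3$ is sharp: the preceding remark shows that for $G = SL_3$ at $p = 2$, the only candidate plane $kx_{\alpha_1} + kx_{\alpha_2}$ fails to lie in $\CN_1(\FU)$ precisely because $\alpha_1 + \alpha_2 = \tilde{\alpha}$ is a root with exponent sum $1 + 1 = 2 = p$, so that $\hat{\CV}(A)$ collapses to two disjoint points and $T(\Dist(U_1)) \cong \bZ \oplus \bZ$ rather than $\bZ$.
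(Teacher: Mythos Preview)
Your reduction $\hat{\CV}(A) = \bP(\CN_1(\FU) \cap \FU')$ is correct. From there, however, your connectedness argument is far more elaborate than needed and leaves real work undone: the non-simply-laced linking chains are only asserted (for example, in $B_2$ at $p=3$ the direct plane $kx_{\alpha_1}+kx_{\alpha_2}$ genuinely fails because $2\alpha_1+\alpha_2$ is a root, so an intermediate root must actually be produced for each such pair), and the ``iterating'' of torus degenerations to reach a single root vector needs more than a sentence---one degeneration along the height cocharacter lands only on the minimal-height component $w_{\min}$, which may be a nontrivial linear combination of several root vectors of that height, and you then need further torus directions or a separate argument to get from $[w_{\min}]$ to some $[x_\gamma]$.

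The paper's proof bypasses all of this with a single uniform observation: for \emph{any} $v \in \CN_1(\FU)$ and any scalars $a,b$, the element $av + bu_{n-1}$ again lies in $\CN_1(\FU)$, where $u_{n-1}$ is the root vector for a root $\alpha$ of next-to-maximal height. The reason is that $[v, u_{n-1}] \in ku_n$ (the only positive root strictly higher than $\alpha$ is $\tilde\alpha$), and $u_n$ is central, so every iterated bracket of $v$ and $u_{n-1}$ of length $\geq 3$ vanishes; for $p \geq 3$ all Jacobson correction terms $s_i(av,bu_{n-1})$ are such brackets, whence $(av+bu_{n-1})^{[p]}=a^pv^{[p]}+b^pu_{n-1}^{[p]}=0$. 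Thus every point of $\hat{\CV}(A)$ lies on a projective line through the fixed hub $[u_{n-1}] = [0:\cdots:0:1]$, and connectedness is immediate---no type-by-type root system analysis, no linking chains through intermediate roots, no torus action.
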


\begin{proof}
Suppose that $\ovl{a} = [a_1, \dots, a_n] \in \CV(A)$ with $a_i \neq 0$ for some
$i < n$.  The point  is that if $v = \sum a_iu_i$ and $p \geq 3$, then 
for any $a,b \in k$, not both zero, we have that $(av +bu_{n-1})^p = 0$. That is, 
the choice of the ordering on the elements $u_i$ insures that $u_{n-1}$
has the form $v_{\alpha}$ for $\alpha \in \Phi^+$ having next to greatest
height. Because, $v^p = 0$ and $p \geq 3$, we have that $(av+bu_{n-1})^p$ involves 
root subalgebras $(U_{\beta})_1$, where $\beta$ has height at least two 
greater than $\alpha$. As there is no such 
$\beta$, we conclude that $(av+bu_{n-1})^p = 0$, as asserted. Hence $\hat{\CV}(A)$
is connected.
\end{proof}

\begin{prop} \label{Gnotsimple}
If the algebraic group $G$ is semisimple but not simple, then 
$\Fu = \Dist(U_1)$ satisfies Hypothesis \ref{hyp2}.
\end{prop}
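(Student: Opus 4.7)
The plan is to reduce to showing that $\hat{\CV}(\Fu)$ is connected (the remaining clauses of Hypothesis~\ref{hyp2} follow from the general discussion preceding Proposition~\ref{patleast3}) and then to exploit the product decomposition of $G$ into simple factors. Since $G$ is semisimple but not simple, I would write $G = G^{(1)} \times G^{(2)}$ as a product of two nontrivial semisimple subgroups, with induced direct sum $\FU = \FU^{(1)} \oplus \FU^{(2)}$ into mutually centralizing summands. I would choose the decomposition so that $u_n \in \FU^{(2)}$ and, whenever possible, so that $G^{(2)}$ has rank at least two.

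First I would compute $\hat{\CV}(\Fu)$ explicitly. Set $V := \mathrm{span}(u_1, \ldots, u_{n-1}) = \FU^{(1)} \oplus W$, where $W \subset \FU^{(2)}$ is spanned by those root vectors distinct from $u_n$. Since $u_n$ is central with $u_n^{[p]} = 0$, the Jacobson formula gives $(w + a u_n)^{[p]} = w^{[p]}$ for all $w \in V$ and $a \in k$, so a point $\overline{b} \in \bP(V)$ lies in $\hat{\CV}(\Fu)$ if and only if $w_{\overline{b}} := \sum_{i<n} b_i u_i$ is $p$-nilpotent. Decomposing $w = x_1 + x_2$ along $V = \FU^{(1)} \oplus W$ and using the commutation of the two factors gives $w^{[p]} = x_1^{[p]} + x_2^{[p]}$ with the summands sitting in the independent subspaces $\FU^{(1)}$ and $\FU^{(2)}$. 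Hence $w^{[p]} = 0$ iff $x_1 \in X := \CN_1(\FU^{(1)})$ and $x_2 \in Y := \CN_1(\FU^{(2)}) \cap W$, and therefore
\[
\hat{\CV}(\Fu) \ = \ \{[x + y] \in \bP(V) \ : \ x \in X, \ y \in Y, \ (x,y) \neq 0\}.
\]

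Next I would establish connectedness via a projective-line argument. In the generic case $G^{(2)}$ has rank $\geq 2$, so both $X$ and $Y$ are nonzero affine cones (each contains the simple root vectors of the corresponding factor other than $u_n$). Fix nonzero $x^* \in X$ and $y^* \in Y$. For any $[x+y] \in \hat{\CV}(\Fu)$ the projective line $\{[tx + sy] : [t\!:\!s] \in \bP^1\}$ sits inside $\hat{\CV}(\Fu)$ by conicity of $X$ and $Y$, and it joins $[x+y]$ to both $[x+0]$ and $[0+y]$; the auxiliary lines $\{[tx + sy^*]\}$ and $\{[tx^* + sy]\}$ then connect $[x+0]$ and $[0+y]$ in turn to $[0+y^*]$ and $[x^*+0]$. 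Chaining these lines places every point of $\hat{\CV}(\Fu)$ in the connected component of $[x^*+0]$. The only case in which $G^{(2)}$ is forced to have rank one is when every simple factor of $G$ has type $A_1$; there $\FU$ is abelian, $\CN_1(\FU) = \FU$, and $\hat{\CV}(\Fu) = \bP^{n-2}$ is trivially connected.

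The main obstacle is the set-theoretic identification of $\hat{\CV}(\Fu)$: the centrality and $p$-nilpotency of $u_n$ are precisely what allow the last projective coordinate to decouple into a free parameter, and once that step is made, the product structure of the restricted nullcone supplied by the semisimple-but-not-simple assumption makes the projective-line argument routine.
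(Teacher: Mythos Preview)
Your argument is correct, and at bottom it uses the same mechanism as the paper's proof: the product structure of $G$ supplies commuting $p$-nilpotent elements, and projective lines through such elements stay inside $\hat{\CV}(A)$. The paper, however, extracts this in a single stroke. Since $G$ has at least two simple factors, each factor contributes a central highest-root vector to $\FU$; hence one may arrange the ordering so that $u_{n-1}$ (and not just $u_n$) is central in $A$ with $u_{n-1}^{[p]}=0$. Then for \emph{any} $v\in\FU$ with $v^{[p]}=0$ the entire line $\{[av+bu_{n-1}]:[a\!:\!b]\in\bP^1\}$ lies in $\hat{\CV}(A)$, so $\hat{\CV}(A)$ is star-shaped with center $[u_{n-1}]$ and therefore connected.

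Compared with this, your route---computing $\hat{\CV}(A)$ as the projective join of the cones $X=\CN_1(\FU^{(1)})$ and $Y=\CN_1(\FU^{(2)})\cap W$, chaining several $\bP^1$'s, and treating the all-$A_1$ case separately---reaches the same conclusion but with more bookkeeping. Your explicit description of $\hat{\CV}(A)$ is a nice byproduct, and your argument would generalize to joins of arbitrary nonzero cones; the paper's version trades that generality for brevity by noticing that a single extra central generator already does the job.
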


\begin{proof}
We are assuming that $G$ is semisimple. If $G$ is not simple, then 
$G=G_{1}\times G_2 \times \dots \times G_t$ and 
$A$ is a sum of algebras $A =  A_1 \oplus  A_2 \oplus \dots A_{t}$. 
Each of the
subalgebras has a nontrivial center. So there exist
some $u_i$ for $i < n$ that is also central. In particular, we can assume 
that $u_{n-1}$ is also central. 
Consequently, if $v^p = 0$, 
then so also is $(av+bu_{n-1})^p = 0$ for any
$(a,b) \in k^2$ with $a$ and $b$ not both zero. Thus, the points in 
$\hat{\CV}(A)$ corresponding to $v$ and $u_{n-1}$ 
are in the same connected component
of $\hat{\CV}(A)$. Hence, there is 
only one connected component.
\end{proof}

For the purposes of proving Theorem \ref{unipotent2}, we may assume that 
$r=1$, $p=2$ and that $G$ is a simple algebraic group. The case that 
$G$ has type $A_2$ was considered in the last section. Hypothesis 
\ref{hyp2} is vacuously satisfied (because $n=1$) if $G$ has type $A_1$.
The remaining cases of groups of Lie rank 2 are settled in the following. 

\begin{prop} \label{Lierank2}
Assume that $p = 2$. 
\begin{enumerate}
\item If $G$ has type $B_2$, then $\CV(A)$ has exactly two nonempty connected
components and Hypothesis \ref{hyp2} is not satisfied by $A = \Dist(U_1)$.
\item If $G$ has type $G_2$, then $A = \Dist(U_1)$ satisfies Hypothesis \ref{hyp2}.
\end{enumerate}
\end{prop}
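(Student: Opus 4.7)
The plan is to identify $\CV(A)$ with $\bP(\CN_1(\mathfrak{u}))$ and compute its defining equations explicitly, then determine the connected components of the projection $\hat{\CV}(A) \subset \bP^{n-2}$. Working in a Chevalley basis $\{x_\alpha\}_{\alpha \in \Phi^+}$ of $\mathfrak{u}$ ordered by height, so that $u_n = x_{\widetilde{\alpha}}$ corresponds to the highest root, and using the characteristic-$2$ identity $xy + yx = [x,y]$ in $u(\mathfrak{u})$, for $v = \sum_\alpha a_\alpha x_\alpha$ one obtains
\[
v^{[2]} \;=\; \sum_{\substack{\alpha < \beta \\ \alpha + \beta \in \Phi^+}} a_\alpha a_\beta\, N_{\alpha, \beta}\, x_{\alpha+\beta}.
\]
Thus $\CV(A)$ is cut out by one quadratic form $Q_\gamma(a) = \sum_{\alpha+\beta=\gamma} N_{\alpha,\beta}\, a_\alpha a_\beta$ for each positive root $\gamma$ of height at least $2$, and only the Chevalley constants with $|N_{\alpha, \beta}|$ odd survive modulo $2$.

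For type $B_2$ the positive roots are $\alpha_1, \alpha_2, \alpha_1+\alpha_2, \alpha_1+2\alpha_2$. I would enumerate the surviving structure constants, write down the small number of resulting quadratic equations on $\CV(A) \subset \bP^3$, and decompose $\CV(A)$ into its irreducible pieces, which should turn out to be coordinate hyperplanes. The key geometric step is then to verify that after deleting the point $[0,0,0,1]$ corresponding to $x_{\alpha_1+2\alpha_2}$ and projecting to $\bP^2$, the images of these pieces split into two disjoint closed subsets, giving the asserted two connected components of $\hat{\CV}(A)$ and the failure of Hypothesis \ref{hyp2}.

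For type $G_2$ the same procedure applied to the six positive roots $\alpha, \beta, \alpha+\beta, 2\alpha+\beta, 3\alpha+\beta, 3\alpha+2\beta$ yields three quadratic equations in six variables. The plan is to show that $\CV(A) \subset \bP^5$ decomposes as the union of a quadric lying in the hyperplane $\{a_\alpha = 0\}$ and a linear subspace defined by $\{a_\beta = a_{2\alpha+\beta} = 0\}$, and that these two pieces meet in a subvariety whose projection to $\bP^4$ remains positive-dimensional. Connectedness of $\hat{\CV}(A)$ then follows, so Hypothesis \ref{hyp2} holds and Theorem \ref{char-endotriv} applies.

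The main obstacle I anticipate is the careful bookkeeping of Chevalley structure constants modulo $2$, especially in $G_2$, where the quadratic form $Q_{3\alpha+2\beta}$ receives contributions from two distinct pairs $\beta + (3\alpha+\beta)$ and $(\alpha+\beta)+(2\alpha+\beta)$, both with odd-absolute-value constants. Fixing a consistent sign convention for the $N_{\alpha, \beta}$ and confirming that no accidental cancellation or unexpected linear factorization occurs in this binomial will be essential before the final projective-geometric component analysis can be completed; the $B_2$ case is comparatively simpler because no such binomial arises.
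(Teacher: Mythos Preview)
Your plan coincides with the paper's proof: both expand $v^{2}$ in a root-vector basis using the characteristic-$2$ identity $xy+yx=[x,y]$, read off the resulting quadratic equations for $\hat{\CV}(A)$, and then inspect the projective variety directly for connectedness. The paper simply writes down the commutator relations it needs and obtains $ab=ac=0$ (two disjoint pieces in $\bP^{2}$) for $B_{2}$ and $ab=ac=ad=be+cd=0$ (a linear piece and a quadric meeting along a positive-dimensional locus) for $G_{2}$; your anticipated bookkeeping with the parity of the $N_{\alpha,\beta}$ is precisely the content of that step.
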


\begin{proof}
In the first case, we may assume that $A = \Dist(U_1)$ is generated by elements
$v_{\alpha}, v_{\beta}, v_{\alpha+\beta}$ and  $v_{2\alpha+\beta}$, where $\alpha$
and $\beta$ are the simple roots with $\beta$ long. Note that $v_{2\alpha+\beta}$
is central (corresponding to $u_n$ in the previous notation). We may assume that
$v_{\alpha} v_{\beta} + v_{\beta} v_{\alpha} = v_{\alpha + \beta}$ and that 
$v_{\alpha} v_{\alpha+\beta} + v_{\alpha+\beta} v_{\alpha} = v_{2\alpha + \beta}$.
Then expanding the equation 
\[
(av_{\alpha} +b v_{\beta} + cv_{\alpha+\beta})^2 \ = \ 0 
\]
we get that $ab =0$ and $ac =0$. Hence, $\hat{\CV}(A)$ has two components 
which intersect in $0$. 

The argument for $G$ of type $G_2$ is similar but with a different result. The 
distribution algebra is generated by,  
\[
v_{\alpha}, \ v_{\beta},  \ v_{\alpha+\beta}, \ v_{2\alpha+\beta}, \ v_{3\alpha+\beta},
\ \  \text{and} \ \ v_{3\alpha+2\beta}
\]
where $\alpha$ and $\beta$ are the simple roots with $\beta$ long. This time,
$v_{3\alpha+2\beta}$ is central. Expanding the equation
\[
(av_{\alpha}+b v_{\beta} + cv_{\alpha+\beta} + dv_{2\alpha+\beta}+ 
ev_{3\alpha+\beta})^2 \ = \ 0
\]
we get that $ab = ac = ad = cd+be=0$. If $b= c = d = 0$, then there is no
restriction on $e$. On the other hand, if $a=0$ then the component defined
by $cd+be = 0$ is connected. 
Therefore, every $\ovl{a}$ in $\hat{\CV}(A)$ is in the connected 
component of $[0,0,0,0,1]$ which corresponds to $v_{3\alpha+\beta}$. 
\end{proof}

\begin{rem} \label{B2rem}
It can be shown, as in the case of $A_2$ in the last section, that if $G$ has type
$B_2$ and $p=2$, then $T(U_1) \cong \bZ\oplus \bZ$. 
\end{rem}

The following theorem describes the exact conditions when Hypothesis 
\ref{hyp2} is satisfied. 

\begin{thm} \label{unipotent2}
Let $U$ be the unipotent radical of a Borel subgroup of the semisimple 
algebraic group $G$. Then $A = \Dist(U_1)$ satisfies Hypothesis 
\ref{hyp2} except in the cases that  
$p =2$, and $G$ has type either $A_2$ or $B_2$. 
\end{thm}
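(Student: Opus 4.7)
The plan is to combine the preceding propositions to reduce the problem to a single residual case, and then establish connectedness of $\hat{\CV}(A)$ in that case.

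By Proposition~\ref{patleast3}, Hypothesis~\ref{hyp2} holds whenever $p \geq 3$, and by Proposition~\ref{Gnotsimple} it holds whenever $G$ is semisimple but not simple. Among simple $G$ at $p = 2$: type $A_1$ is vacuous since $n = 1$; type $A_2$ is the first listed exception (treated in the remark before this section); type $B_2$ is the second listed exception (Proposition~\ref{Lierank2}(1)); and type $G_2$ is handled by Proposition~\ref{Lierank2}(2). So only the case $p = 2$ with $G$ simple of Lie rank $\ell \geq 3$ remains.

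For this remaining case, the key simplification is that $u_n = v_{\tilde{\alpha}}$ (the highest-root vector) is central in $\FU$ and satisfies $u_n^2 = 0$; hence for $v = \sum_i a_iu_i$, the square $v^2 = (\sum_{i<n}a_iu_i)^2$ is independent of $a_n$, and
\[
\hat{\CV}(A) = \Bigl\{[a_1,\ldots,a_{n-1}] \in \bP^{n-2} : \Bigl(\textstyle\sum_{i<n}a_iu_i\Bigr)^2 = 0 \text{ in } \FU\Bigr\}.
\]
I would prove connectedness by exhibiting an irreducible subvariety $Z \subset \hat{\CV}(A)$ that meets every irreducible component. A natural choice for $Z$ is the projective linear subspace spanned by all root vectors $v_\gamma$ of height $\geq h$, where $2h > \mathrm{ht}(\tilde{\alpha})$: any two such root vectors commute (their sum exceeds the highest-root height, hence is not a root), so the span is abelian, and in characteristic $2$ every element there has square zero, placing $Z$ inside $\hat{\CV}(A)$. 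The model case of $A_3$ (worked out via the quadratic equations $ab = bc = ae + cd = 0$) illustrates the expected behavior: $\hat{\CV}(A)$ decomposes into a quadric $\{b = 0,\ ae + cd = 0\}$ and a linear subspace $\{a = c = 0\}$, meeting along a common $\bP^1$ sitting inside $Z$.

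The main obstacle will be showing that every irreducible component of $\hat{\CV}(A)$ actually meets $Z$, uniformly across all rank-$\geq 3$ simple types. The cleanest route is likely a case-by-case check for each of $A_\ell, B_\ell, C_\ell, D_\ell$ ($\ell \geq 3$), $E_6, E_7, E_8$, and $F_4$, where one writes down the quadratic relations imposed by $v^2 = 0$ and verifies that setting all simple-root coordinates to zero produces a point in every component. A more uniform alternative would exploit that, for $\ell \geq 3$, every non-highest positive root $\gamma$ admits a simple root $\alpha$ with $\alpha + \gamma \notin \Phi$, so that $\mathrm{span}(v_\alpha, v_\gamma) \subset \CN_1(\FU)$ provides an abelian two-dimensional "connecting line" in $\hat{\CV}(A)$ between distinct components.
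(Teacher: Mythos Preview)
Your reduction to the case $p = 2$, $G$ simple of Lie rank $\ell \geq 3$ is correct and matches the paper exactly. The remaining case, however, is where the content lies, and here your proposal has a genuine gap: you outline a strategy (find an irreducible $Z \subset \hat{\CV}(A)$ and check that every component meets it) but do not carry it out, and your ``uniform alternative'' at the end does not work as stated. Connecting each root vector $v_\gamma$ to some simple-root vector $v_\alpha$ via a commuting line addresses neither general points of $\hat{\CV}(A)$ (which are linear combinations, not single root vectors) nor the question of whether the various $v_\alpha$ you land on are themselves in a common component.

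The paper's argument is different and avoids any decomposition of $\hat{\CV}(A)$ into components. It fixes the two elements $u_{n-2}, u_{n-1}$ corresponding to the roots $\beta_1, \beta_2$ of second- and third-greatest height, and shows that for \emph{every} $v = \sum_{i<n} a_i u_i$ with $v^2 = 0$ there exists $w = c\,u_{n-2} + d\,u_{n-1}$ with $[v,w] = 0$; then $(v+tw)^2 = 0$ for all $t$, so the line joining $[v]$ to $[w]$ lies in $\hat{\CV}(A)$. Since $\ell \geq 3$ forces $[u_{n-2},u_{n-1}] = 0$, the projective line $\{[c\,u_{n-2}+d\,u_{n-1}]\}$ is itself contained in $\hat{\CV}(A)$, and every point is connected to it. The existence of such a $w$ is argued in two cases, according to whether $\beta_1$ and $\beta_2$ have equal height (then both $[v,u_{n-2}]$ and $[v,u_{n-1}]$ lie in $k u_n$, so some combination vanishes) or not (then a short root-system computation, using a table of the three highest roots in each simple type, shows the same conclusion). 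This is the missing idea: rather than locating components, one produces for each $v$ a commuting partner in a \emph{fixed} two-dimensional abelian subspace.
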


\begin{proof} 
From Propositions \ref{patleast3}, \ref{Gnotsimple}, \ref{Lierank2}
and Remark \ref{B2rem}
stated above, we have reduced the proof of 
this theorem to dealing with the case that $p =2$, $r=1$ 
and $G$ is simple having Lie rank $\ell \geq 3$. 
The distribution algebra $A = \Dist(U_1)$ has generators $u_1, \dots, u_n$ 
where for each $i$, $u_i = x_{\alpha}$ for $\alpha$ a positive root. That is, the 
elements $u_i$ can be indexed by the elements of the set $\Phi^+$ of positive
roots. Recall that the ordering on $u_1, \dots, u_n$ respects the height of the
corresponding roots. Thus $u_n$ corresponds to a root of maximal height. 

In the arguments that follow we require some information about the three roots
of greatest height in $\Phi^+$. This information is given in the following table. 
For each type of simple root systems and for each of the three roots of 
maximal height, the table gives the  coefficients of the root expressed as 
a sum of simple roots. The ordering on the simple roots follows that of Bourbaki 
as given in \cite{Hum}. This table was compiled from information given in 
\cite{BMP} \cite{Hum}. 

\begin{center}
\begin{tabular}{|c|c|c|c|}\hline
Type & $\beta_{1}$ & $\beta_{2}$ & $\beta_{3}$ \\ \hline
$A_l$ & $(1,1,\dots,1,0)$ & $(0,1,1,\dots,1)$ 
& $(1,1,\dots,1)$ \\ \hline

$B_l$ & $(0,1,2, \dots, 2)$ & $(1,1,2, \dots, 2)$ &  $(1,2,2, \dots, 2)$ \\\hline

$C_l$ & $(1,1,2,\dots,2,1)$  & $(1,2,\dots,2,1)$ & $(2,2,\dots,2,1)$ \\ \hline

$D_l$& $(0,1,2,\dots, 2 ,1,1) $ & $(1,1,2,\dots, 2 ,1,1) $ & $(1,2,\dots, 2 ,1,1) $ \\ \hline 

$E_6$ & $(1,1,2,2,2,1)$  & $(1,1,2,3,2,1)$  &  $(1,2,2,3,2,1)$  \\  \hline 

$E_7$ & $(1,2,2,4,3,2,1)$ & $(1,2,3,4,3,2,1)$ & $(2,2,3,4,3,2,1)$ \\ \hline

$E_8$ & $(1,2,4,6,5,4,3,2)$ & $(1,3,4,6,5,4,3,2)$ & $(2,3,4,6,5,4,3,2)$ \\ \hline

$F_4$ & $(1,2,4,2)  $ & $(1,3,4,2) $ & $(2,3,4,2) $ \\ \hline
\end{tabular}
\end{center}

The first case that we consider is the one in which the root system of $G$ has 
two roots with (the same) next to maximal height. This is the case, for example,
if $G$ has type $A_n$. In this case, $u_{n-2}$ and $u_{n-1}$ correspond to 
roots $\beta_1$ and $\beta_2$ having the same height (see the table). Because the 
Lie rank of $G$ is larger than 2, we must have that any element of the form
$v = au_{n-2}+bu_{n-1}$ has $v^2 = 0$ and hence  $[0, \dots, a,b] \in \hat{\CV}(A)$. 
As a consequence, all such
elements are in the same connected component (call it $\CC$) 
of $\hat{\CV}(A)$. 
Suppose that $\ovl{a} = [a_1, \dots, a_{n-1}]$ is in $\hat{\CV}(A)$. Let 
$v = a_1u_1 + \dots + a_{n-1}u_{n-1}$. Then the commutators
$[v, u_{n-2}]= vu_{n-2}-u_{n-2}v$ and $[v,u_{n-1}]$ are 
both elements of the space generated by
$u_n$, because of the rank considerations. Consequently, there is some 
combination $w = cu_{n-2}+du_{n-1}$ such that $[v,w] = 0$.  Hence, we see that
$\ovl{a}$ is in the connected component $\CC$ that contains 
the point corresponding to $w$. 

The other possible case is that $u_{n-2}$ and $u_{n-1}$ correspond to roots of
different height. That is, we are assuming that there is only one root of next to 
maximal height. Let $\Delta = \{\alpha_1, \dots, \alpha_{\ell}\}$ be the simple
roots. There is no loss of generality in assuming here that $u_i = x_{\alpha_i}$
for $i = 1, \dots, \ell$. Let $\beta_1, \beta_2$ and $\beta_3$ be the roots 
corresponding to the elements $u_{n-2}, u_{n-1}$ and $u_n$, respectively (see the table). 
Our assumption on heights requires that there be some
simple root $\alpha_i$ and another simple root $\alpha_j$ such that 
$\beta_2 = \beta_1+\alpha_i$ and $\beta_3 = \beta_2 + \alpha_j$.
In these circumstances, we know that $\alpha_i + \alpha_j$ is a root of height 2.

As in the previous case, because $\ell \geq 3$ we know that $\beta_1$ must have
height at least 2, and so $u_{n-2}$ and $u_{n-1}$ are in the same connected 
component $\CC$ of $\hat{\CV}(A)$. That is, the commutator $[u_{n-2}, u_{n-1}] = 0$.
Moreover any linear combination of $u_{n-2}$ and $u_{n-1}$ is also in $\CC$.
Choose any element $v = a_1u_1 + \dots + 
a_{n-1}u_{n-1}$ with $v^2 = 0$.  We must have that 
$a_ia_j = 0$ ($i$ and $j$ as above), as otherwise $v^2$ would 
have a nonzero coefficient on the basis
element $u_s$ corresponding to the root $\alpha_i+\alpha_j$. So either 
$a_i=0$ or $a_j =0$. In the second case ($a_j =0$), the commutator $[v,u_{n-1}] =0$. 

So consider the first case, that $a_i = 0$. then we must have that $[v,u_{n-2}] = 
ca_su_n$ for some constant $c$ where $u_s$ corresponds to the root $\alpha_i+\alpha_j$.
At the same time $[v,u_{n-1}] = da_ju_n$. for some constant $d$. 
The numbers $c$ and $d$ only depend on the choice of the basis $u_1, \dots, u_n$ 
and not on the choice of $v$. So in any event, some linear combination $w = au_{n-2} 
+bu_{n-1}$ has the property that $[v,w] = 0$. Hence, the point of $\hat{\CV}(A)$ 
corresponding to $v$ is in the same connected 
component as that of $w$ and that component is $\CC$.
\end{proof}

We can now determine the group of endotrivial modules for $U_r$, 
in the case that $r = 1$.

\begin{thm} \label{unipotent3}
Let $U$ be the unipotent radical of a Borel subgroup of the semisimple 
algebraic group $G$. Then $T(U_1) \cong \bZ$ and is generated by 
the class or $\Omega(k)$ except in the cases that $\Phi=A_{1}$ or that 
$p =2$ and $\Phi=A_2$ or $B_2$. 
\end{thm}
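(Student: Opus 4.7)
The plan is to invoke Theorem \ref{char-endotriv} applied to $A = \Dist(U_1)$ in the non-excluded cases. Hypothesis \ref{hyp1} is supplied by Theorem \ref{unipotent1}, and Hypothesis \ref{hyp2} by the just-proved Theorem \ref{unipotent2}, which holds in precisely the non-excluded cases beyond rank one. The case $\Phi = A_1$ must be excluded at the outset because it gives $n = \dim \FU = 1$, violating $n > 1$ in Hypothesis \ref{hyp1}; in that case $\Dist(U_1) \cong k[u]/(u^p)$ is the group algebra of a cyclic group of order $p$, whose endotrivial group is $\bZ/2$ by Dade and therefore not $\bZ$. So the remaining task is to verify Hypothesis \ref{hyp3}.

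For this, I take $u_n = x_{\alpha_0}$, where $\alpha_0$ is a root of maximal height, as arranged in the proof of Theorem \ref{unipotent1}. Because $\alpha_0 + \beta$ is never a root for $\beta \in \Phi^+$, the element $u_n$ is central in $\FU$, and $u_n^{[p]} = 0$, so $k u_n$ is a restricted Lie ideal of $\FU$. The element $u_n$ is primitive in the Hopf algebra $A = u(\FU)$, so the two-sided ideal $A u_n$ is a Hopf ideal, and the quotient $A/A u_n \cong u(\FU / k u_n) = \Dist((U/U_{\alpha_0})_1)$ is itself the distribution algebra of a finite group scheme. This establishes the first part of Hypothesis \ref{hyp3}.

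For the lifting property, given a $p$-point $\alpha: k[t]/(t^p) \to A/A u_n$, the characterization of $p$-points for restricted enveloping algebras via the restricted nullcone (cited in the discussion preceding Proposition \ref{patleast3}) applies equally to the quotient $u(\FU / k u_n)$, so I may assume $\alpha(t) = \bar v$ for some $\bar v \in \FU / k u_n$ with $\bar v^{[p]} = 0$. Lift $\bar v$ to $v \in \FU$. Then $v^{[p]}$ reduces to zero in the quotient, so $v^{[p]} = c u_n$ for some $c \in k$. If $c = 0$, then $\beta(t) = v$ is a $p$-point of $A$ whose composition with the projection to $A/A u_n$ is equivalent to $\alpha$, giving case (a); if $c \neq 0$, setting $x = v$ gives $x^p = c u_n$ with $c$ a unit in $A$, which is case (b). With Hypothesis \ref{hyp3} in hand, Theorem \ref{char-endotriv} yields $T(U_1) \cong \bZ$ generated by $[\Omega(k)]$. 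The only slightly delicate step is the reduction of an arbitrary $p$-point of $A/A u_n$ to one coming from the restricted nullcone, but this is automatic once the quotient is recognized as another restricted enveloping algebra, after which the rest of the argument is essentially bookkeeping with the $p$-power map on $\FU$.
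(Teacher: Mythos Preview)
Your proof is correct and follows essentially the same route as the paper: you reduce to verifying Hypothesis~\ref{hyp3} by identifying $A/Au_n$ with the restricted enveloping algebra $u(\FU/\langle u_n\rangle)$, normalize a $p$-point of the quotient to lie in the restricted nullcone, lift it to $v\in\FU$, and observe that $v^{[p]}\in ku_n$ forces either case~(a) or case~(b). Your treatment is slightly more explicit in justifying that $Au_n$ is a Hopf ideal (via primitivity of $u_n$) and in handling the excluded $A_1$ case, but the argument is otherwise the same as the paper's.
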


\begin{proof}
Let $\FU$ be the restricted $p$-Lie algebra of $U$. We have shown that 
$\Dist(U_1)$ satisfies Hypothesis \ref{hyp1} and in the proof we 
see that the elements $u_1, \dots, u_n$ satisfying the hypothesis can be 
taken to be a basis of $\FU \subseteq \Dist(U_1)$. 

All of the above applies to the group scheme $\hat{U}_1$ whose distribution 
algebra is $\Dist(U_1)/\Dist(U_1)u_n$. This is the restricted enveloping algebra
of the restricted $p$-Lie algebra $\hat{\FU}/\langle u_n \rangle$. 
Here $\langle u_n \rangle$ denotes the Lie subalgebra generated by
$u_n$ and having dimension one. Any equivalence
class of  $p$-points for $\Dist(\hat{U}_1)$ is represented by one of the form
$\alpha:k[t]/(t^p) \longrightarrow \Dist(\hat{U}_1)$ such that $\alpha(t) \in \hat{\FU}$.
Let $x \in \FU$ be an element such that $x+ \langle u_n \rangle = \alpha(t)$. 
We know that as an elements of $\Dist(U_1)$, $x^p = x^{[p]} \in \langle u_n \rangle$
since $\alpha(t)^p = 0$ in $\Dist(\hat{U}_1)$. It follows that either $x^p = 0$ or
$x^p$ is a multiple (by an element of $k$) of $u_n$. In the former case the 
$p$-point $\alpha$ lifts to the $p$-point $\beta:k[t]/(t^p) \longrightarrow \Dist(U_1)$,
given by $\beta(t) = x$.

Therefore, we have shown that $U_1$ satisfies Hypothesis \ref{hyp3}. Hence,
the theorem follows from Theorem \ref{char-endotriv}.
\end{proof}


\section{Endotrivial modules over parabolic subgroups} 

In this section we show that we can use the results of the last 
section to obtain information on infinitesimal subgroups of 
parabolic subgroups. We maintain 
the notation of the last section and the same assumptions on 
the group $G$. First we consider the case of
the Borel subgroup.

For any group scheme $H$, let $\text{mod}(H)$ be the category of
finite dimensional rational $H$-modules. This construction can be applied 
when $H=G$, $B$, $P_{J}$, $L_{J}$, $U$, $U_{J}$ and $T$. Let $X(T_{r})$ be 
the set of characters of $T_{r}$ which can 
be indentified with the set of one dimensional 
simple modules for $T_{r}$. 

\begin{thm} Let 
$U$ be the unipotent radical of $B$. Then we have the following.
\begin{itemize} 
\item[(a)] Suppose that $\dim U=1$. Then $T(U_{1})\cong  \{1\}$ if $p = 2$,
$T(U_{1})\cong {\mathbb Z}/2{\mathbb Z}$ if $p > 2$ and 
$T(U_{r})\cong {\mathbb Z}$ for $r\geq 2$. 
\item[(b)] In any case, $T(B_{1}) \cong X(T_{1})\otimes T(U_{1})$
\end{itemize} 
\end{thm}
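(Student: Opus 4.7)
The proof naturally splits into the two parts.

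For part (a), when $r=1$ and $\dim U = 1$, the algebra $kU_1$ has dimension $p$ and, as an associative algebra, is isomorphic to $k[x]/(x^p)$, the mod-$p$ group algebra of a cyclic group of order $p$. Theorem \ref{pi-char-endotriv} tells us that endotriviality depends only on the algebra structure, so $T(U_1)$ equals the endotrivial group of $\bZ/p\bZ$. A direct computation (or Dade's theorem) shows this is trivial when $p=2$ (because $\Omega(k)$ has dimension $p-1=1$, hence $\cong k$) and is $\bZ/2\bZ$ when $p>2$ (generated by $\Omega(k)$, since $\Omega^{2}(k)\cong k$). For $r\geq 2$ I would verify Hypotheses \ref{hyp1}--\ref{hyp3} for $A=\Dist(U_r)$ and then invoke Theorem \ref{char-endotriv}. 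Hypothesis \ref{hyp1} is Theorem \ref{unipotent1}. Since $U\cong \mathbb{G}_a$ is abelian, the generators $v_{\alpha,1},\ldots,v_{\alpha,r}$ mutually commute and are $p$-nilpotent; for any $v=\sum a_iv_{\alpha,i}$ we get $v^p=\sum a_i^p v_{\alpha,i}^p = 0$ (all cross-terms carry multinomial coefficients divisible by $p$). Hence $\CV(A)=\mathbb{A}^r$ and $\hat{\CV}(A)$ is a connected affine space, giving Hypothesis \ref{hyp2}. For Hypothesis \ref{hyp3}, any $p$-point $\alpha$ of $A/Au_n$ lifts via the honest element $\sum_{i<r}a_iv_{\alpha,i}\in A$ (whose $p$-th power is again zero by the same calculation), so situation (a) of Hypothesis \ref{hyp3} occurs. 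Theorem \ref{char-endotriv} then delivers $T(U_r)\cong \bZ$.

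For part (b), interpreting the displayed product as a direct sum of abelian groups, the plan is to establish and split the short exact sequence
$$
0 \longrightarrow X(T_1) \stackrel{\iota}{\longrightarrow} T(B_1) \stackrel{\res}{\longrightarrow} T(U_1) \longrightarrow 0.
$$
The map $\iota$ sends $\lambda$ to the class of the one-dimensional $B_1$-module $k_\lambda$ pulled back along $B_1 \twoheadrightarrow T_1$; one-dimensional modules are always endotrivial, so $\iota$ is well defined. The map $\res$ is well defined by Proposition \ref{restr-endotriv}, applied to the flat inclusion $kU_1\hookrightarrow kB_1$. The composition $\res\circ\iota$ is zero, and $\iota$ is injective by Krull--Schmidt: a trivial class $[k_\lambda]=[k]$ forces $k_\lambda\cong k$ (the one-dimensional module $k_\lambda$ is indecomposable and non-projective), hence $\lambda=0$.

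Surjectivity of $\res$ is obtained by lifting generators. The class $[\Omega_{U_1}(k)]$ is hit by $[\Omega_{B_1}(k)]$, because $kB_1$ is free as a $kU_1$-module, so a projective resolution of $k$ over $kB_1$ restricts to a projective resolution over $kU_1$ and $\Omega_{B_1}(k)|_{U_1}\cong \Omega_{U_1}(k)\oplus (\text{proj})$. In the exceptional cases noted in Theorem \ref{unipotent3} and Remark \ref{B2rem} (Lie types $A_2$ and $B_2$ at $p=2$, where $T(U_1)\cong \bZ\oplus\bZ$), the additional generator must also be lifted to a $B_1$-module using the explicit dihedral/Klein-four endotrivial modules from the classical theory. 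These lifts together produce a section $s\colon T(U_1)\to T(B_1)$ of $\res$.

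For exactness in the middle, suppose $[M]\in T(B_1)$ with $M|_{U_1}\cong k\oplus P$, $P$ projective over $kU_1$. The socle of $kU_1$ is one-dimensional (Lemma \ref{uni-dim-count}), and $T_1$ acts on the $U_1$-socle of $M$; the unique non-projective summand contributes a one-dimensional $T_1$-weight space $k_\lambda$. Twisting by $k_{-\lambda}$ reduces to the case $\lambda=0$, and a $T_1$-equivariant version of the socle argument in Lemma \ref{uni-dim-count}, exploiting that $kB_1$ is $T_1$-graded and $kU_1$-free, then yields $M\otimes k_{-\lambda}\cong k\oplus Q$ with $Q$ projective over $kB_1$, so $[M]=\iota(\lambda)$. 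This step is the main obstacle, since it requires transferring the unipotent socle analysis to the $B_1$-setting while faithfully tracking $T_1$-weights through the stable-equivalence relation; once it is in place, combining $\iota$ with the section $s$ gives the decomposition $T(B_1)\cong X(T_1)\oplus T(U_1)$.
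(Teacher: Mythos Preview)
Your part (a) is correct but takes a longer route than the paper. The paper simply observes that, when $\dim U = 1$, the algebra $\Dist(U_r)$ is isomorphic as an associative $k$-algebra to the group algebra of an elementary abelian $p$-group of rank $r$: the generators $v_{\alpha,1},\dots,v_{\alpha,r}$ commute, are $p$-nilpotent, and the $p^r$ ordered monomials give a basis. Since endotriviality depends only on the algebra structure (Theorem~\ref{pi-char-endotriv}), $T(U_r)$ equals the endotrivial group of $(\bZ/p)^r$, and Dade's theorem finishes all cases at once. Your verification of Hypotheses~\ref{hyp1}--\ref{hyp3} works, but it is circuitous: you are re-deriving a special case of Dade's theorem via the machinery of Section~4, which itself rests on Dade.

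For part (b) your short-exact-sequence framework matches the paper's, and your treatment of surjectivity is essentially the same. The divergence, and the genuine gap in your write-up, is at the step you yourself flag as ``the main obstacle'': showing that a class in $\ker(\res)$ is represented by a one-dimensional module. The paper bypasses your socle analysis entirely by invoking the fact that \emph{an indecomposable $B_1$-module remains indecomposable on restriction to $U_1$} (a standard consequence of $B_1/U_1\cong T_1$ being diagonalizable, hence linearly reductive). Granting this, if $M$ is the indecomposable representative of a class with $[M|_{U_1}]=[k]$, then $M|_{U_1}$ is indecomposable and stably trivial, hence literally $k$; thus $\dim M=1$ and $M\cong k_\lambda$.

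Your direct approach can be salvaged, but you should see that it secretly uses the same input. The quotient $M^{U_1}/uM$ (with $u$ a generator of $\Soc(kU_1)$) is a one-dimensional $T_1$-module $k_\lambda$; after twisting by $k_{-\lambda}$ one obtains a $B_1$-fixed vector $v$ such that $M/kv$ is $U_1$-projective. The crucial missing step is that $U_1$-projective implies $B_1$-projective, which again comes from linear reductivity of $T_1$; once you have that, $M/kv$ is $B_1$-injective and the sequence $0\to k\to M\to M/kv\to 0$ splits over $B_1$, forcing $M\cong k$ by indecomposability. So the ``$T_1$-equivariant socle argument'' you gesture at is really the paper's indecomposability-preservation fact in disguise, and you should state and use that fact directly.
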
 

\begin{proof}
To prove part (a) note that the $\Dist(U_r)$ is isomorphic to 
the group algebra of an elementary abelian $p$-subgroup $E$ of rank $r$. 
Consequently, $T(U_r) = T(E)$ and is as given in the theorem. 

For part (b), suppose that $M$ is an indecomposable
endotrivial module for $B_1$.  By the results of the last section, 
the image of the restriction of $T(B_1)$ to $T(U_1)$ 
is generated by the class of $\Omega(k\vert_{U_1})$ and by at most one other class. 
It can be shown, using methods similar to those in \cite{CMN1,CMN2} that 
the restriction map is surjective. In the case that $T(U_1) \cong \bZ$ is 
generated by the class of $\Omega(k)$, this is obvious. For the cases that
$p=2$ and $G$ is of type $A_2$, or $B_2$ we must rely on the fact that
the cohomology $\HHH^*(B_1,k)$ is finitely generated over the subalgebra 
generated by degree 2 elements (cf. \cite[Section 1]{FPa}).
In these siituations we may choose inverse images for the generators of $T(U_1)$ under
the restriction map. One of these can be taken to be $\Omega(k)$.  The other is 
constructed using the cohomological push-out method used in 
\cite{CMN1,CMN2}.

Now recall that an indecomposable 
module over $B_1$ must be indecomposable when restricted to $U_1$.
As a result, if $N$ is a suitable product of the inverse images of the
generators such that $N_{U_1}$ is in the same class as $M_{U_1}$ 
modulo projectives, then $M \otimes N^*$ is in the class of $k$ on 
restriction to $U_1$ and its class contains a one dimensional 
module. In particular, it is
an element of $X(T_1)$ as asserted. 
\end{proof}

For notation in what follows,  let 
$J\subseteq \Delta$, with $U_{J}$ the 
unipotent radical of parabolic subgroup $P_{J}$. 
The Levi subgroup is 
denoted by $L_{J}$ with $P_{J}=L_{J}\ltimes U_{J}$. 

In what follows we assume that $T(U_r) \cong \bZ$. If $r = 1$, then this
is equivalent to the 
assumption that we are not in one of the exceptional cases of Theorem
\ref{unipotent2}.  The exceptional cases can be handled similarly, but with
more work. 

For parabolic subgroups $P_{J}$ where $J$ is a nonempty proper subset of 
$\Delta$, the only endotrivial modules for $(P_{J})_{r}$ arise from 
syzygies of the trivial module. 
 
\begin{thm} Assume that $\dim U\geq 2$ and $T(U_r)$ is generated by $\Omega(k)$. 
Let $J$ be a proper subset of $\Delta$. Then 
$T((P_J)_{r})=X((P_{J})_{r})\otimes {\mathbb Z}$. 
\end{thm}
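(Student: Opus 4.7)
The plan is to imitate the strategy used for $B_1$ in the preceding theorem, leveraging the flat inclusion $U_r\subseteq (P_J)_r$ coming from $U\subseteq B\subseteq P_J$. By Proposition \ref{restr-endotriv}, restriction induces a group homomorphism $\res: T((P_J)_r)\to T(U_r)\cong\bZ$, where the identification uses the hypothesis. First I would verify that $\res$ is a split surjection: the projective cover of the trivial $(P_J)_r$-module remains projective on restriction to $U_r$, so the class $[\Omega(k_{(P_J)_r})]$ maps to the generator $[\Omega(k_{U_r})]$, and the assignment $n\mapsto[\Omega^n(k_{(P_J)_r})]$ provides a splitting. Every character $\chi\in X((P_J)_r)$ is one-dimensional and restricts to the trivial module, so $X((P_J)_r)\subseteq\ker(\res)$.

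The heart of the argument is the reverse inclusion $\ker(\res)\subseteq X((P_J)_r)$. Let $M$ be an indecomposable endotrivial $(P_J)_r$-module with $M|_{U_r}\cong k\oplus Q$ for some $U_r$-free module $Q$. I would first restrict to $B_r\subseteq(P_J)_r$: by the $B_r$-analog of the preceding theorem, whose proof for $B_1$ adapts essentially verbatim (the surjectivity of $T(B_r)\to T(U_r)$ is obtained from the same cohomological push-out method used in \cite{CMN1,CMN2}), modulo a $B_r$-projective summand $M|_{B_r}$ is a one-dimensional $B_r$-module $k_\lambda$ of some weight $\lambda\in X(T_r)$. Equivalently, there is a $B_r$-semi-invariant vector $v\in M$ of weight $\lambda$ spanning the unique trivial $U_r$-summand of $M|_{U_r}$.

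The main obstacle is to promote $kv$ to a $(P_J)_r$-submodule, equivalently to show that $\lambda$ extends to a character of $(P_J)_r$. Since $(P_J)_r$ is generated by $B_r$ together with the positive root subgroups $(U_\alpha)_r$ for $\alpha\in J$, it suffices to prove that $e\cdot v=0$ for every $e\in(U_\alpha)_r$ and every $\alpha\in J$. If $e\cdot v\neq 0$, then $e\cdot v$ is a $T_r$-weight vector of weight $\lambda+c\alpha$ for some nonzero scalar $c$; applying the opposite root element $e_{-\alpha}\in(U_{-\alpha})_r\subseteq U_r$ and using the Chevalley commutator $[e,e_{-\alpha}]\in T$, together with a Jordan-type count on $M|_{U_r}$, would produce a second $U_r$-fixed vector linearly independent from $v$, contradicting the one-dimensionality of the trivial $U_r$-summand. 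Hence $kv$ is a $(P_J)_r$-line, $\lambda$ extends to some $\chi\in X((P_J)_r)$, and after twisting $M$ by $\chi^{-1}$ the indecomposability of $M$ combined with the fact that the complement of $kv$ in $M|_{B_r}$ is $B_r$-projective (hence $(P_J)_r$-projective by the standard detection of projectivity via restriction to a maximal unipotent subgroup scheme) forces $M\cong\chi$. Combined with the splitting from the first paragraph, this yields the desired decomposition $T((P_J)_r)\cong X((P_J)_r)\oplus\bZ$.
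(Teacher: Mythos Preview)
There is a genuine gap in the final step. Your claim that ``$B_r$-projectivity of the complement implies $(P_J)_r$-projectivity by detection on a maximal unipotent subgroup scheme'' is false for modules that are only $(P_J)_r$-modules rather than $P_J$-modules. Every $\pi$-point of $(P_J)_r$ factors through some $(U')_r$ where $U'$ is the unipotent radical of \emph{some} Borel $B'\subseteq P_J$, but these Borels are only $L_J$-conjugate, not equal. For a genuine $(P_J)_r$-module with no rational $P_J$-structure there is no conjugation argument available, and one can construct $(P_J)_r$-modules that are free over $U_r$ yet not free over $(U')_r$ for a different $U'$ (take any module whose support variety misses the class of $U_r$ but meets that of $U'_r$). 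So even granting that $kv$ is a $(P_J)_r$-line, you have not shown that $M/kv$ is projective over $(P_J)_r$, and the sequence need not split.

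Your argument that $e\cdot v=0$ for $e\in (U_\alpha)_r$, $\alpha\in J$, is also too sketchy to be convincing: the vector $e\cdot v$ is not $U_r$-fixed a priori, and it is not clear how the commutator identity with $e_{-\alpha}$ manufactures a second $U_r$-invariant. The paper circumvents both problems by exploiting the \emph{normal} subgroup $(U_J)_r\trianglelefteq (P_J)_r$. Since $U_J\subseteq U'\cap U$ for every Borel $B'\subseteq P_J$, one first shows $M|_{(U')_r}\cong k\oplus(\text{proj})$ for all such $U'$ simultaneously. Then the fixed-point module $M^{(U_J)_r}$ is an $(L_J)_r$-module which is checked to be endotrivial, and from its structure one extracts a $(P_J)_r$-map $\lambda\hookrightarrow M$ whose cokernel is projective over every $(U')_r$, hence over $(P_J)_r$. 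The normality of $U_J$ is exactly what lets the argument see all Borels at once; your route through a single $B_r$ cannot.
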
 

\begin{proof} Let $M$ be an indecomposable endotrivial 
module over $(P_J)_{r}$. Then 
$M$ is endotrivial over $U_{r}$. Since $\dim U\geq 2$, it follows that 
$M|_{U_{r}}\cong \Omega^{n}(k)\oplus 
\text{(proj)}$ so we can assume using dimension shifting that 
$M|_{U_{r}}\cong k\oplus \text{(proj)}$. Hence 
$M|_{(U_{J})_{r}}\cong k\oplus \text{(proj)}$. 
Now let $U^{\prime}$ be another unipotent radical 
for some other Borel subgroup $B\subset P_{J}$. 
Then $M|_{U^{\prime}_{r}}\cong \Omega^{t}(k)\oplus \text{(proj)}$ 
for some $t$. However, 
$U_{J}\subseteq U^{\prime}\cap U$ so 
$M|_{(U_J)_{r}}\cong \Omega^{t}(k)\oplus \text{(proj)}$. 
Thus, $t=0$ and $M|_{U^{\prime}_{r}}\cong k\oplus \text{(proj)}$. 

We also note that the condition that $\dim U\geq 2$ insures that the 
rank of $\Phi$ is greater than two, thus $\dim U_{J}\geq 2$ for 
$J$ a proper subset of $\Delta$. Moreover, one can verify that 
$\dim {\mathcal N}({\mathfrak u}_{J})\geq 2$. This insures that 
$(U_J)_{1}$ has nonperiodic cohomology, so that $\Omega^t(k) \not\cong k$ unless $t = 0$.  

Next we observe that $U_{J}$ is contained 
in every maximal unipotent subgroup of $P_J$. Let 
$H$ be a maximal unipotent subgroup and 
take $HU_{J}$. Since $U_{J}$ is a normal 
subgroup of $P_{J}$, $HU_{J}$ is a subalgebra 
and unipotent. Thus by maximality, 
$U_{J}\leq H$. Moreover, one has $H=\hat{U}\ltimes U_{J}$ where $\hat{U}$ is 
a maximal unipotent subgroup in $L_{J}$.  

Let $L=M^{(U_{J})_{r}}$ be the fixed points of $M$ under $(U_{J})_{r}$. We claim that $L$ 
is an endotrivial $(L_{J})_{r}$-module.  Any 
$\pi$-point, $\alpha:k[t]/(t^{p})\rightarrow \text{Dist}((L_{J})_{r})$  factors as 
$\alpha:k[t]/(t^{p})\rightarrow \text{Dist}((U^{\prime})_{r})
\rightarrow \text{Dist}((U^{\prime})_{r}/U_{r}) \rightarrow \text{Dist}((L_{J})_{r})$ 
for some unipotent radical $U^{\prime}$ of a Borel subgroup in $P_{J}$. 
We have $M|_{U^{\prime}_{r}}\cong k\oplus \text{(proj)}$ so 
$$
L|_{U^{\prime}_{r}}=M^{(U_{J})_{r}}|_{U^{\prime}_{r}}\cong 
k\oplus \text{(proj)}^{(U_{J})_{r}}.
$$ 
Note that $\text{(proj)}^{(U_{J})_{r}}$ is a projective 
$U^{\prime}_{r}/U_{r}$-module. 
The restriction of $M$ along $\alpha$ to a  $(k[t]/(t^{p}))$-module decomposes 
as $k\oplus \text{(proj)}$.   So the fixed point module $M^{(U_{J})_{r}}$ 
has constant Jordan type $\text{[1]}+\text{(proj)}$. 
Consequently, $M^{(U_{J})_{r}}$ is 
endotrivial as an $(L_{J})_{r}$-module by Theorem \ref{pi-char-endotriv}.

If $\bar{u}$ be a generator for the socle of $\text{Dist}((U_J)_{r})$, 
then 
$$
\bar{u}M\subseteq M^{(U_J)_{r}}
$$ 
and there is a short exact sequence of $(P_{J})_{r}$-modules 
$$
0\rightarrow \bar{u}M\rightarrow  
M^{(U_J)_{r}}\rightarrow \lambda \rightarrow 0
$$ 
where $\lambda\in X((P_{J})_{r})$. 
By the preceding argument, $\bar{u}M$ 
is a free $\text{Dist}((U^{\prime})_{1}/(U_J)_{r})$, 
so $\bar{u}M$ has constant Jordan type. 
Consequently, $\bar{u}M$ is a projective $(L_{J})_{r}$-module 
because $\bar{u}$ is normalized by elements in $(P_{J})_{r}$. 

It follows that, as an  $(L_{J})_{r}$-module, 
$M^{(U_{J})_{r}}\cong \lambda \oplus \text{(proj)}$. 
Let $\hat{\iota}:\lambda \rightarrow M^{(U_{J})_{r}}$.  We can compose this 
with the inclusion of $M^{(U_{J})_{r}}\subseteq M$ 
to obtain a $(P_{J})_{r}$-homomorphism: 
$\iota:\lambda \rightarrow M$. We obtain a short exact sequence, 
$$0\rightarrow \lambda  \stackrel{\iota}{\rightarrow} M \rightarrow Q \rightarrow 0$$
of $(P_{J})_{r}$-modules. 

Note that $\iota(\lambda)\nsubseteq \bar{u}M$ and 
$M|_{U^{\prime}_{r}}\cong \lambda \oplus R$ where $R$ is projective. So all 
$U^{\prime}_{r}$-fixed points of $R$ are in $\bar{u}M=\bar{u}R$. 
It follows that $\iota(\lambda)\nsubseteq R$. Hence, 
$Q\cong M/\iota(\lambda)$ is a projective $U^{\prime}_{r}$-module because 
$$
Q|_{U^{\prime}_{r}} \ \cong \ (R\oplus k)/\iota(\lambda) \ \cong \ R.
$$ 
Any $\pi$-point factors through some $(U^{\prime})_{r}$ 
thus $Q$ is projective as a module over $(P_{J})_{r}$. Since $M$ 
is indecomposable we must have $M\cong \lambda$.  
\end{proof} 


\section{Restriction of Weyl modules over $SL_2$ to unipotent subschemes}

In this section we adhere to the notation presented in 
Section 5. In addition, let 
the Euclidean space associated with the root system $\Phi$ be 
denoted by ${\mathbb E}$ and
the inner product on ${\mathbb E} $  by $\langle\ , \ \rangle$.
The Weyl group $W$ is the group generated by reflections
associated to the root system $\Phi$ and the affine Weyl group $W_{p}$
is the group generated by $W$ and translations by elements 
in $p{\mathbb Z}\Phi$.
Let $X(T)$ be the integral weight lattice obtained from $\Phi$.
The set $X(T)$ has a partial
ordering defined as follows. If $\lambda,\mu\in X(T)$, then
$\lambda\geq \mu$ if and only if $\lambda - \mu\in \sum_{\alpha\in
\Pi}\mathbb{N}\alpha$.
If $\alpha^{\vee}=2\alpha/\langle\alpha,\alpha\rangle$ is the coroot
corresponding to $\alpha\in \Phi$, then
the set of dominant integral weights is defined by
$$X(T)_{+}=\{\lambda\in X(T):\ 0\leq \langle\lambda,\alpha^{\vee}\rangle\
\text{for all $\alpha \in \Delta$} \}.$$
Furthermore, the set of $p^{r}$-restricted weights is
$$X_{r}(T)=\{\lambda\in X(T):\
0\leq\langle\lambda,\alpha^{\vee}\rangle<p^{r}\,\,\text{for all $\alpha\in
\Delta$}\}.$$
The affine Weyl group $W_{p}$ acts on $X(T)$
via the ``dot action'' given by $w\cdot \lambda= w(\lambda+\rho)-\rho$
where $w\in W_{p}$, $\lambda\in X(T)$, and
$\rho$ is the half sum of positive roots.

For a reductive algebraic group $G$,
the simple modules will be denoted
by $L(\lambda)$ and the induced modules by $H^{0}(\lambda)=\text{
ind}_{B}^{G} \lambda$, where $\lambda\in X(T)_{+}$. 
The Weyl module $V(\lambda)$ has 
simple head $L(\lambda)$ and is defined as 
$V(\lambda)=H^{0}(-w_{0}\lambda)^{*}$. 
For the infinitesimal group scheme $G_{r}$, the
simple modules will be denoted by $L_{r}(\lambda)$. 
If $\lambda\in X_{r}(T)$, then
$L_{r}(\lambda)\cong L(\lambda)$ as modules over $G_r$. Also, 
for $\lambda\in X(T)_{+}$, let $T(\lambda)$ be 
the indecomposable tilting module with highest weight $\lambda$.

For the rest of this section, suppose that  $G = SL_2$. 
Let $U$ be the unipotent radical of $B$ 
(negative Borel subgroup). Note $X(T)_{+}={\mathbb N}$ and $\dim U=1$. 
The algebra $kU_{r}=\text{Dist}(U_{r})$ is a divided power algebra with basis $\{x^i/i!
\ \vert \ i = 0, 1, \dots, p^r-1\}$ with relations 
\[
\frac{x^i}{i!} \cdot \frac{x^j}{j!} = \binom{i+j}{i} \frac{x^{i+j}}{(i+j)!}
\]
for all $i$ and $j$ such that $i+j < p^r$. The product is zero of 
$i+j \geq p^r$.

Let $W$ be the natural 2 dimensional module for $G$. It has a basis
$\{v_1, v_2\}$ such that $xv_1 = v_2$ and $xv_2 = 0$. The Weyl module
$V(\lambda)$ is the $\lambda^{th}$ symmetric product $S^\lambda(W)$ (viewed 
as a quotient of $W^{\otimes \lambda}$). It has dimension
$\lambda+1$.  The highest weight element in $V(\lambda)$ is the element 
$w_0 = v_1 \otimes \dots \otimes v_1$, with $\lambda$ factors in the tensor product. 
The module $V(\lambda)$ has a basis consisting of $w_0$ and all $w_i$, as given
below, for 
$i = 2, \dots,  \lambda$. The element $w_i$ is the sum in $S^{\lambda}(W)$ 
of all monomials of the form 
$v_{s(1)} \otimes v_{s(2)} \otimes \dots \otimes v_{s(\lambda)}$ 
where $s$ runs through the collection of all function  $s:\{1, \dots, \lambda\} 
\to \{1,2\}$ such that $i$ value of $s$ are equal to $2$ and $\lambda-i$
values of $s$ are equal to $1$. Clearly there are $\binom{\lambda}{i}$ 
such monomials. 

Recall that if $L$ and $M$ are $kU$-modules, then the action of $kU$ on 
$M \otimes N$ is given by 
\[
\frac{x^t}{t!} (\ell \otimes m) \ = \ \sum_{j = 0}^t \frac{x^j}{j!} \ell 
\otimes \frac{x^{t-j}}{(t-j)!} m 
\]
for all $\ell \in L$, $m \in M$ and all $t \leq p^r-1$. The aforementioned 
action is imposed by the Hopf algebra structure on $kU$. The first result that we need is the following.

\begin{lemma}
Let $j$ and $t$ be nonnegative integers such that $j+t \leq \lambda$. Then 
\[ 
\frac{x^j}{j!} w_t \ = \ \binom{t+j}{j} w_{t+j}.
\]
If $t+j > \lambda$, then $(x^j/(j!)) w_t = 0$. 
\end{lemma}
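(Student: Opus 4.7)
The plan is to compute $(x^j/j!)w_t$ directly by iterating the coproduct formula. Recall that $w_t = \sum_s v_{s(1)} \otimes \cdots \otimes v_{s(\lambda)}$, where $s$ ranges over functions $\{1,\dots,\lambda\}\to\{1,2\}$ taking the value $2$ exactly $t$ times. Iterating the given action formula to $\lambda$ tensor factors gives
\[
\frac{x^j}{j!}(v_{s(1)} \otimes \cdots \otimes v_{s(\lambda)}) \ = \
\sum_{i_1+\cdots+i_\lambda = j} \ \frac{x^{i_1}}{i_1!}v_{s(1)} \otimes \cdots
\otimes \frac{x^{i_\lambda}}{i_\lambda!}v_{s(\lambda)}.
\]
Since $xv_1 = v_2$ and $xv_2 = 0$, and since $x^2$ annihilates $W$, a summand is nonzero only when every $i_k \in \{0,1\}$ and, moreover, $i_k = 0$ whenever $s(k) = 2$. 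In such a summand the tensor factor at position $k$ is $v_2$ precisely when either $s(k) = 2$ (contributing $t$ factors of $v_2$) or $s(k) = 1$ and $i_k = 1$ (contributing $j$ more factors of $v_2$), and is $v_1$ otherwise.

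First I would observe that if $t+j > \lambda$, then for every $s$ contributing to $w_t$ there are only $\lambda - t < j$ positions with $s(k)=1$, so no admissible tuple $(i_1,\dots,i_\lambda)$ exists and the result is zero, as asserted. Assuming $t+j\leq \lambda$, each resulting monomial is a tensor product in which exactly $t+j$ factors equal $v_2$, so it is one of the monomials appearing in $w_{t+j}$. Thus $(x^j/j!)w_t$ is a scalar multiple of $w_{t+j}$.

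The final step is to determine the coefficient by counting preimages of a fixed monomial $m$ having $v_2$ in positions $P \subseteq \{1,\dots,\lambda\}$ with $|P| = t+j$. A pair $(s, (i_1,\dots,i_\lambda))$ produces $m$ iff $s$ takes value $2$ on some $t$-subset $P_0 \subseteq P$ and $i_k = 1$ exactly on $P \setminus P_0$. The number of such pairs is $\binom{t+j}{t} = \binom{t+j}{j}$, and this is independent of $m$, yielding
\[
\frac{x^j}{j!}w_t \ = \ \binom{t+j}{j}\, w_{t+j}.
\]
There is no real obstacle here; the only care needed is to bookkeep the two sources of $v_2$ factors (from $s$ and from the divided power) and to verify the binomial count, which is the routine double-counting argument sketched above.
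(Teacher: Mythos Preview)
Your proof is correct and follows essentially the same monomial-counting approach as the paper: both expand $(x^j/j!)$ via the iterated coproduct, observe that only tuples with each $i_k\in\{0,1\}$ and $i_k=0$ where $s(k)=2$ survive, and then count. The only difference is organizational---you fix a target monomial in $w_{t+j}$ and count preimages to get $\binom{t+j}{j}$ directly, whereas the paper fixes a source monomial in $w_t$, counts the $\binom{\lambda-t}{j}$ images, and then divides the total $\binom{\lambda}{t}\binom{\lambda-t}{j}$ by the number $\binom{\lambda}{t+j}$ of targets; these are the two sides of the same double count.
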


\begin{proof}
We notice first that $(x^i/(i!)) v_j = 0$ for $i \geq 2$. As a 
consequence, we have that $(x^i/(i!)) w_0 = w_i$ provided $i <\lambda$
and $i < p^r$. This observation is enough to prove the lemma in the 
case that $t+j < p^r$. To verify the general case we must count 
monomials. Hence, one way to proceed would be to embed $U = U_r$ in 
$U_s$ for $s$ sufficiently large. On the other hand, it is a simple 
task to count monomial terms. 

Suppose that $u = u_1 \otimes \dots \otimes u_{\lambda}$ is a monomial such that 
exactly $t$ of the $u_i's$ are equal to $v_2$ and the rest equal to $v_1$.
Then the expression of $(x^j/(j!)) u$ as a sum of monomial terms,
has $\binom{\lambda-t}{j}$ such terms. Moreover, all such terms have exactly
$t+j$ factors equal to $v_2$ while the rest are equal to $v_1$. That is, 
$x^j/(j!) \cdot w_t$ is a multiple of $w_{t+j}$. Now, $w_{t+j}$
has exactly $\binom{\lambda}{t+j}$ total terms. Because there are 
$\binom{\lambda}{t}$ such $u$ in the expression of $w_t$, we must have
that the coefficient 
on each term in the expression of $(x^j/(j!)) w_t$ is 
\[
\frac{\binom{\lambda-t}{j}\binom{\lambda}{t+j}}{\binom{\lambda}{t}} = 
\frac{(\lambda-t)!}{j! (\lambda-t-j)!} \frac{\lambda!}{(t)! (\lambda-t)!} \frac{(t+j)!(\lambda-t-j)!}{\lambda!}
= \binom{t+j}{j}
\]
as asserted. 
\end{proof}

With this formula we can prove the following result. 

\begin{prop} 
Let $n$ be any positive integer. Then the restriction of $V(np^r)$ 
to $U_{r}$ is the direct sum
\[
V(np^{r})|_{U_{r}}\cong k\oplus (kU_{r})^{\oplus n}.
\]
\end{prop}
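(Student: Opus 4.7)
The plan is to apply Lemma \ref{uni-dim-count} with the element $u = x^{p^r-1}/(p^r-1)! \in kU_r$. First I would verify that $u$ generates the socle of $kU_r$: the multiplication formula gives $u \cdot (x^j/j!) = \binom{p^r-1+j}{j}\, x^{p^r-1+j}/(p^r-1+j)!$, which vanishes in $kU_r$ for $1 \leq j < p^r$ since the exponent then reaches or exceeds $p^r$. Since Lemma \ref{uni-dim-count} tells us the socle is one-dimensional and $u$ is a nonzero element of it, $u$ is a socle generator. Once I show that $\dim(u \cdot V(np^r)) = n$, Lemma \ref{uni-dim-count} yields $V(np^r)|_{U_r} \cong (kU_r)^{\oplus n} \oplus N$ with $N$ having no projective submodule; a dimension count gives $\dim N = (np^r + 1) - np^r = 1$, and since $k$ is the unique simple $kU_r$-module, $N \cong k$.

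So the work reduces to computing $\dim(u \cdot V(np^r))$. Applying the preceding lemma, $u \cdot w_t = \binom{t+p^r-1}{p^r-1}\, w_{t+p^r-1}$ when $t + p^r - 1 \leq np^r$, and is zero otherwise. Because the nonzero images lie in distinct basis vectors $w_{t+p^r-1}$, the desired dimension equals the number of indices $t \in \{0, 1, \dots, (n-1)p^r + 1\}$ with $\binom{t+p^r-1}{p^r-1} \not\equiv 0 \pmod{p}$.

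I would finish with a Lucas-theorem digit analysis. All base-$p$ digits of $p^r - 1$ equal $p-1$, so the binomial coefficient is nonzero iff the lowest $r$ base-$p$ digits of $t + p^r - 1$ are each equal to $p - 1$. Writing $t = qp^r + s$ with $0 \leq s < p^r$: if $s = 0$ then $t + p^r - 1 = (q+1)p^r - 1$, whose bottom $r$ digits are all $p-1$; if $s \geq 1$ then $t + p^r - 1 = (q+1)p^r + (s-1)$ with $0 \leq s - 1 < p^r - 1$, forcing some low digit below $p - 1$. Hence the surviving $t$'s are precisely the multiples of $p^r$ in $\{0, p^r, 2p^r, \dots, (n-1)p^r\}$, a set of cardinality $n$. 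The main obstacle is arranging this digit analysis cleanly; everything else is a direct invocation of Lemma \ref{uni-dim-count} and the preceding lemma.
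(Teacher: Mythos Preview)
Your argument is correct. Both your proof and the paper's hinge on the same fact---that $u = x^{p^r-1}/(p^r-1)!$ generates the socle of $kU_r$ and that $u\cdot w_t \neq 0$ precisely when $t$ is a multiple of $p^r$---but they package it differently. The paper builds an explicit injection $\psi:(kU_r)^{\oplus n}\hookrightarrow V(np^r)$ by sending free generators to $w_{(i-1)p^r}$, checks injectivity on the socle via an ad~hoc $p$-divisibility count of $\binom{ip^r-1}{p^r-1}$, and then splits the resulting short exact sequence using injectivity of $kU_r$. You instead compute $\dim\bigl(u\cdot V(np^r)\bigr)$ directly with Lucas' theorem and invoke Lemma~\ref{uni-dim-count}. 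The paper's route is slightly more economical in that it only needs the $n$ relevant binomial coefficients to be \emph{nonzero}, whereas you must also confirm that the remaining ones \emph{vanish}; on the other hand, your appeal to Lemma~\ref{uni-dim-count} makes the splitting step automatic and arguably cleaner. Either way the content is the same.
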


\begin{proof}
Recall that $kU_{r}$ is a local ring with a simple socle that is spanned
as a $k$-vector space by the element $x^{p^r-1}/(p^r-1)!$. In particular,
it is a self injective algebra, and hence projective $kU_{r}$-modules are both
free and injective. 

Suppose that $Q$ is free $kU_{r}$-module with $kU_{r}$-generators $q_1, \dots,
q_n$. Define a homomorphism $\psi: Q \longrightarrow V(np^{r})$ by the assignment
$\psi(q_i) = w_{(i-1)(p^r)}$. Thus we have that 
\[
\psi(\frac{x^{p^r-1}}{(p^r-1)!} q_i) = 
\binom{(i-1)p^r + p^r-1}{p^r-1} w_{ip^r-1} \neq 0
\]
The reason why the above expression is not zero is that 
\[
\binom{(i-1)p^r + p^r-1}{p^r-1} = \prod_{j =0}^{p^r-2} 
\frac{ip^r -1-j}{p^r-1-j}
\]
where in each factor the same power of $p$ divides the numerator as 
divides the denominator.  That is, for $0 \leq j \leq p^r-2$, the highest
power of $p$ which divides either $ip^r-1-j$ or $p^r-1-j$ is the same
as the highest power of $p$ which divides $1+j$. 
This proves that the map $\psi$ is injective
since it is injective on the socle of $Q$. Moreover, the image of $\psi$ 
has codimension one in $W$.  Hence we have an exact sequence 
\[
\xymatrix{
0 \ar[r] & Q \ar[r]^{\psi} & W \ar[r] & k \ar[r] & 0
}
\]
which is split because $Q$ is an injective module. 
\end{proof}

\begin{thm} \label{sl2-1}
Let $G=SL_{2}$. The Weyl module $V(\lambda)$ (resp. $H^{0}(\lambda)$) is endotrivial over $G_{r}$ if 
and only if $\lambda=np^{r}$ or $np^{r}-2$ for $n\geq 0$. 
\end{thm}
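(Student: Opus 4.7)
I invoke Theorem~\ref{pi-char-endotriv}: $V(\lambda)$ is endotrivial over $G_r$ precisely when, for every $p$-point $\alpha\colon k[t]/(t^p)\to kG_r$, the Jordan type of $\alpha^*(V(\lambda))$ is $[1]+m[p]$ or $[p-1]+m[p]$. Since $V(\lambda)$ is a rational $G$-module and, for $G=SL_2$, every $p$-point of $G_r$ is $G(k)$-conjugate to one factoring through $kU_r$, it is enough to analyse $p$-points of $U_r$.

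\textbf{Sufficiency.} Refining the proposition above, I first establish the explicit $kU_r$-decompositions
\[
V(np^r)|_{U_r}\ \cong\ k\ \oplus\ (kU_r)^n,\qquad V(np^r-2)|_{U_r}\ \cong\ (kU_r/\Soc(kU_r))\ \oplus\ (kU_r)^{n-1}
\]
by considering the cyclic submodules $kU_r\cdot w_{ip^r}$ for $i=0,\ldots,n$ (resp.\ $0,\ldots,n-1$).  By the preceding Lemma, $(x^j/j!)\cdot w_{ip^r}=\binom{ip^r+j}{j}w_{ip^r+j}$, and Lucas' theorem gives $\binom{ip^r+j}{j}\equiv 1\pmod{p}$ for $0\le j<p^r$, so $kU_r\cdot w_{ip^r}$ has basis the nonzero $w_{ip^r+j}$ with $0\le j<p^r$.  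In $V(np^r-2)$ the vanishing $w_{np^r-1}=0$ collapses the image of $\Soc(kU_r)$ in the top cyclic piece, yielding the $kU_r/\Soc$ summand; in $V(np^r)$ the extra top vector $w_{np^r}$ is $U_r$-fixed and gives the trivial summand.  The combinatorial crux is $U_r$-stability of each cyclic piece: whenever $s+j\ge p^r$ with $s,j<p^r$, Kummer's theorem forces $\binom{ip^r+s+j}{j}\equiv 0\pmod{p}$ since the addition of $s$ and $j$ in base $p$ must carry into position $r$.  With the decomposition in hand, flatness of any $p$-point $\alpha$ of $U_r$ gives $\alpha^*(kU_r)\cong(k[t]/(t^p))^{p^{r-1}}$ of Jordan type $p^{r-1}[p]$, while $\alpha^*(kU_r/\Soc)$ is the quotient of a free module by a one-dimensional socle element and has Jordan type $[p-1]+(p^{r-1}-1)[p]$.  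Summing, $\alpha^*(V(np^r))$ has Jordan type $[1]+np^{r-1}[p]$ and $\alpha^*(V(np^r-2))$ has Jordan type $[p-1]+(np^{r-1}-1)[p]$, both of the required form.

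\textbf{Necessity.} For each $j\in\{0,1,\ldots,r-1\}$ consider the $p$-point $\alpha_j\colon k[t]/(t^p)\to kU_r$, $t\mapsto x^{p^j}/(p^j)!$.  Writing $\lambda=Qp^{j+1}+Rp^j+S$ with $0\le R<p$ and $0\le S<p^j$, Lucas' theorem shows that $(x^{p^j}/(p^j)!)\cdot w_t=(d_j(t)+1)w_{t+p^j}$ when the base-$p$ position-$j$ digit $d_j(t)$ is less than $p-1$, and vanishes otherwise.  Grouping the basis $\{w_t:0\le t\le\lambda\}$ by all base-$p$ digits except position $j$, the Jordan type of $\alpha_j^*(V(\lambda))$ is
\[
Qp^j[p]\ +\ (S+1)[R+1]\ +\ (p^j-1-S)[R].
\]
Requiring this to be $[1]+m[p]$ or $[p-1]+m[p]$ and running through the cases forces $\lambda\equiv 0$ or $\lambda\equiv -2\pmod{p^{j+1}}$ for each $j$; applied at $j=r-1$, this yields $\lambda\equiv 0$ or $-2\pmod{p^r}$, hence $\lambda=np^r$ or $np^r-2$.

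\textbf{The case of $H^0(\lambda)$.} Since $-w_0$ is the identity for $SL_2$, $H^0(\lambda)\cong V(\lambda)^*$; because $T(kG_r)$ is a group under tensor product with $-[M]=[M^*]$, the statement for $H^0(\lambda)$ is equivalent to that for $V(\lambda)$.  The main obstacle throughout is the Kummer/Lucas bookkeeping — first to verify $U_r$-stability of each cyclic summand in the sufficiency argument, then to extract the dichotomy $\lambda\equiv 0,-2\pmod{p^{j+1}}$ from the Jordan-type formula in the necessity argument.
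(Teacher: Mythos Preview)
Your proof is correct, but it diverges from the paper's in two places worth noting.

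\emph{Necessity.} The paper dispatches this in one line: if $V(\lambda)$ is endotrivial over $G_r$ then it is endotrivial over $U_r$, and since $kU_r$ has dimension $p^r$ every endotrivial $U_r$-module has dimension $\equiv \pm 1 \pmod{p^r}$; hence $\lambda+1\equiv\pm 1\pmod{p^r}$. Your explicit Jordan-type computation for the $p$-points $\alpha_j$ reaches the same conclusion (and indeed only $j=r-1$ is needed), but the dimension argument is considerably shorter and avoids the case analysis on $R$ and $S$.

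\emph{Sufficiency for $\lambda=np^r-2$.} Here the approaches genuinely differ. The paper does not redo the combinatorics: it invokes the translation-functor short exact sequence
\[
0\longrightarrow V(np^r)\longrightarrow T_{-1}^{0}\bigl(V(np^r-1)\bigr)\longrightarrow V(np^r-2)\longrightarrow 0
\]
whose middle term is projective over $G_r$, so $V(np^r-2)|_{U_r}\cong \Omega^{-1}(k)\oplus(\text{proj})$ follows immediately from the $np^r$ case. Your direct decomposition $V(np^r-2)|_{U_r}\cong (kU_r/\Soc)\oplus(kU_r)^{n-1}$ via the cyclic pieces $kU_r\cdot w_{ip^r}$ is more elementary---it needs nothing beyond the binomial Lemma and Lucas/Kummer---and it yields finer structural information about the restriction. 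The price is the extra bookkeeping you flag at the end. Both routes then finish the same way, using $G(k)$-conjugacy of maximal unipotents to pass from $U_r$ to $G_r$ via Theorem~\ref{pi-char-endotriv}.
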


\begin{proof} First observe that $\dim V(\lambda)=\lambda+1$. If $V(\lambda)|_{G_{r}}$ 
is endotrivial then $\dim V(\lambda)\equiv \pm 1 \ \text{mod }p^{r}$. 
This implies that $\lambda=np^{r}$ or $np^{r}-2$ where $n\geq 0$. 
 
Next we know that $V(np^{r})$ is endotrivial as a $U_{r}$-module. Because $V(np^{r})$ is a 
$G$-module, $V(np^{r})$ is an endotrivial module for any maximal unipotent 
subgroup scheme $U'_{r}$ of $G_{r}$. Now every $\pi$-point factors through 
one of these group schemes. Consequently, the result follows from 
Theorem \ref{pi-char-endotriv}.

Let $T_{\lambda}^{\mu}$ be the translation functor as given in \cite[Chapter 7]{Jan}. 
According to \cite[II. 7.19 Proposition]{Jan}, there exists a short exact sequence 
of $G$-modules 
$$
0\rightarrow V(np^{r}) \rightarrow T_{-1}^{0}(V(np^{r}-1)) \rightarrow V(np^{r}-2) \rightarrow 0.
$$
But, $T_{-1}^{0}(V(np^{r}-1))|_{G_{r}}$ is a projective (also injective) 
$G_{r}$-module, thus 
an injective $U_{r}$-module. It follows that   
$V(np^{r}-2)|_{U_{r}}\cong \Omega^{-1}(k)\oplus (\text{proj})$ and 
$V(np^{r}-2)|_{U_{r}}$ is endotrivial over $U_{r}$. The argument in the preceding 
paragraph can be used to conclude that $V(np^{r}-2)|_{G_{r}}$ is endotrivial. 
\end{proof}

\begin{cor} Let $G=SL_{2}$. 
\begin{itemize} 
\item[(a)] The simple module $L(\lambda)$ is endotrivial over $G_{r}$ if 
and only if $r=1$ and $\lambda=0,p-2$.
\item[(b)] The tilting module $T(\lambda)|_{G_{r}}$ is endotrivial if and only if 
$r=1$ and $\lambda=0,p-2$.
\end{itemize}  
\end{cor}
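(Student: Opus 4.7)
My plan is to combine Theorem \ref{sl2-1} with Steinberg's tensor product theorem (for simples) and Donkin's tensor product theorem (for tilting modules), together with the necessary condition that any endotrivial $G_r$-module $M$ must satisfy $\dim_k M \equiv \pm 1 \pmod{p^r}$. This last condition follows from restricting $M$ along the flat inclusion $\Dist(U_r) \hookrightarrow \Dist(G_r)$: Proposition \ref{restr-endotriv} yields that $M|_{U_r}$ is endotrivial over the cyclic algebra $\Dist(U_r)$ of dimension $p^r$, and Lemma \ref{ele-endotriv} then forces $M|_{U_r} \cong \Omega^n(k) \oplus \proj$, whose $k$-dimension is $\pm 1 \pmod{p^r}$.

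For part (a), I would first reduce to $\lambda \in X_r(T)$. Writing $\lambda = \lambda' + p^r \mu$ with $\lambda' \in X_r(T)$ and $\mu \in X(T)_+$, Steinberg's theorem gives $L(\lambda) \cong L(\lambda') \otimes L(\mu)^{(r)}$; since $F^r$ annihilates $G_r$, the Frobenius twist becomes a trivial $G_r$-module, yielding $L(\lambda)|_{G_r} \cong (L(\lambda')|_{G_r})^{\oplus \dim L(\mu)}$. A direct sum of $n \geq 2$ copies of any $G_r$-module cannot be endotrivial, since its stable endomorphism algebra $M_n(\End_{\text{stmod}}(N))$ has $k$-dimension at least $n^2 \geq 4$ rather than $1$; hence $\dim L(\mu) = 1$, so $\mu = 0$ and $\lambda \in X_r(T)$. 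Writing $\lambda = \sum_{i=0}^{r-1} p^i \lambda_i$, Steinberg gives $\dim L(\lambda) = \prod_{i=0}^{r-1}(\lambda_i + 1)$. If any $\lambda_i = p - 1$ then $p \mid \dim L(\lambda)$, contradicting the mod-$p^r$ constraint, so each $\lambda_i \leq p - 2$ and $\dim L(\lambda) \leq (p-1)^r$. For $r = 1$, the dimension must lie in $\{1, p - 1\}$, giving $\lambda \in \{0, p-2\}$; and $L(p-2) = V(p-2)$ is endotrivial by Theorem \ref{sl2-1} with $n = 1$. For $r \geq 2$, a short induction shows $(p-1)^r < p^r - 1$, so the only admissible dimension is $1$, forcing $\lambda = 0$ (the trivial case).

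For part (b), I would split on $\lambda \leq p - 2$ versus $\lambda \geq p - 1$. If $\lambda \leq p - 2$ then $T(\lambda) = V(\lambda)$ and Theorem \ref{sl2-1} directly gives the classification. If $\lambda \geq p - 1$, I would invoke Donkin's tilting tensor product theorem for $SL_2$: writing $\lambda = \lambda_0 + p\lambda_1$ with $\lambda_0 \in [p-1, 2p-2]$ and $\lambda_1 \geq 0$, one has $T(\lambda) \cong T(\lambda_0) \otimes T(\lambda_1)^{(1)}$. The dimensions in the second alcove are $\dim T(p-1) = p$ (Steinberg), and for $\lambda_0 = p + s$ with $s \in [0, p-2]$ the self-dual tilting character gives $\dim T(\lambda_0) = \dim V(\lambda_0) + \dim V(2(p-1) - \lambda_0) = (p + s + 1) + (p - 1 - s) = 2p$. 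In every case $p$ divides $\dim T(\lambda_0)$, hence $p$ divides $\dim T(\lambda)$, so that $\dim(T(\lambda)|_{G_1}) \not\equiv \pm 1 \pmod{p}$. By the opening paragraph $T(\lambda)|_{G_1}$ cannot be endotrivial over $G_1$, and restriction along the flat inclusion $\Dist(G_1) \hookrightarrow \Dist(G_r)$ (Proposition \ref{restr-endotriv}) then rules out $T(\lambda)|_{G_r}$ being endotrivial over $G_r$ either.

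The main technical obstacle is marshaling the correct form of Donkin's tilting tensor product theorem for $SL_2$ (a rank-one case where the statement is valid in all characteristics) together with the dimension computation in the second alcove; once these are in place the dimension bookkeeping is routine. A minor pedantic point is that the trivial module $L(0) = T(0) = k$ is of course endotrivial for every $r$, so the statement ``$r = 1$ and $\lambda = 0, p-2$'' should be read with the understanding that $\lambda = 0$ is the identity of $T(G_r)$; the substantive content of the corollary is concentrated in the case $\lambda = p - 2$, $r = 1$.
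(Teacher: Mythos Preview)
Your proof is correct, but for part (a) it takes a different route from the paper. The paper reduces to $\lambda\in X_1(T)$ (not $X_r(T)$) by restricting to $G_1$: Steinberg gives $L(\lambda)|_{G_1}\cong L(\lambda_0)^{\oplus n}$ with $\lambda_0\in X_1(T)$, and endotriviality forces $n=1$. Since $L(\lambda_0)=V(\lambda_0)$ for $\lambda_0\in X_1(T)$ in type $A_1$, Theorem~\ref{sl2-1} finishes immediately: intersecting $\{np^r,\,np^r-2\}$ with $[0,p-1]$ leaves $\{0,p-2\}$ when $r=1$ and $\{0\}$ when $r\ge2$. Your argument instead reduces only to $X_r(T)$ and then runs a pure dimension count via $\dim L(\lambda)=\prod(\lambda_i+1)\le(p-1)^r<p^r-1$. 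Both work; the paper's version is shorter because it reuses the Weyl-module classification already in hand, while yours is more elementary in that the necessity direction never appeals to Theorem~\ref{sl2-1}. One small quibble: your justification that $N^{\oplus n}$ with $n\ge2$ cannot be endotrivial (``stable endomorphism algebra has dimension $\ge n^2$'') tacitly assumes $N$ is not projective; when $N=L(\lambda')$ is the Steinberg module this fails, but then $N^{\oplus n}$ is projective and hence not endotrivial anyway, so the conclusion survives.

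For part (b) the two arguments are essentially the same. The paper simply cites \cite[E.8 Lemma]{Jan} to get that $T(\lambda)|_{G_1}$ is projective (hence $p\mid\dim T(\lambda)$) for $\lambda\ge p-1$, whereas you derive the divisibility explicitly from Donkin's tensor product factorisation and the second-alcove dimension formula $\dim T(p+s)=2p$. Your computation is correct and makes the argument self-contained, at the cost of invoking Donkin's theorem where a one-line citation suffices.
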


\begin{proof} (a) Let $\lambda\in X(T)_{+}$ and assume that $L(\lambda)$ is 
endotrivial over $G_{r}$. Then $L(\lambda)|_{G_{1}}$ must be a direct sum of an endotrivial 
$G_{1}$-module with a projective $G_{1}$-module. By Steinberg's Tensor 
Product Theorem, $L(\lambda)=L(\lambda_{0})\otimes L(\mu)^{(1)}$ where 
$\lambda\in X_{1}(T)$ and $\mu\in X(T)_{+}$. Therefore, 
$L(\lambda)|_{G_{1}}\cong L(\lambda_{0})^{\oplus n}$ where $n=\dim 
L(\mu)^{(1)}$. Combining this with the fact mentioned above means that $\mu=0$ 
and $L(\lambda)=L(\lambda_{0})$, thus 
$\lambda\in X_{1}(T)$. 

Now when $\Phi$ is of type $A_{1}$, $V(\lambda)=L(\lambda)$ for all $\lambda\in X_{1}(T)$.
Thus the result follows by using Theorem \ref{sl2-1}. 

(b) Suppose that $T(\lambda)|_{G_{r}}$ is endotrivial. If $\lambda\geq p-1$ then  
$T(\lambda)|_{G_{1}}$ is projective \cite[E.8. 
Lemma]{Jan}, and $p\mid \dim T(\lambda)$. Thus 
we can assume that $0\leq \lambda \leq p-2$. 
The only possibilities for endotrivial 
modules for $T(\lambda)|_{G_{r}}$ occur when $\lambda=0,p-2$ when $r=1$ because 
in these cases $T(\lambda)\cong V(\lambda)$ for $0\leq \lambda \leq p-1$. 
\end{proof} 

Finally, we can determine the structure of the endotrivial group for $G_{1}=(SL_{2})_{1}$. 

\begin{cor} Let $G=SL_{2}$. Then $T(G_{1})\cong {\mathbb Z}
\oplus {\mathbb Z}/2{\mathbb Z}$. 
\end{cor}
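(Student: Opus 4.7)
The plan is to exhibit two explicit endotrivial $G_{1}$-modules, one of infinite order and one of order two, and to show that together they generate all of $T(G_{1})$.

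First I would produce the infinite-order generator. Set $\alpha = [\Omega(k)]$. The support variety $\Pi(G_{1})$ is homeomorphic to the projectivization of the restricted nullcone $\CN_{1}(\mathfrak{sl}_{2})$, which is a plane conic of projective dimension one. Consequently $\HHH^{\ast}(G_{1},k)$ has Krull dimension at least two and is non-periodic, so $\Omega^{n}(k) \not\cong k$ stably for any $n \neq 0$, and $\alpha$ has infinite order in $T(G_{1})$.

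Next, assuming $p$ is odd, I would take $\beta = [V(p-2)]$. Theorem~\ref{sl2-1} shows that $V(p-2)$ is endotrivial over $G_{1}$, and it is self-dual (being the simple symmetric power $S^{p-2}(W)$), so $2\beta = 0$. The $w_{i}$-basis computation of the previous section shows $V(p-2)\vert_{U_{1}} \cong k[x]/(x^{p-1}) \cong \Omega(k)\vert_{U_{1}}$, which is the nontrivial element of $T(U_{1}) \cong \bZ/2\bZ$; hence $\beta \neq 0$. Independence of $\alpha$ and $\beta$ is immediate: any relation $n\alpha + \epsilon\beta = 0$ with $\epsilon \in \{0,1\}$ doubles to $2n\alpha = 0$, forcing $n = 0$ and then $\epsilon = 0$.

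The essential step is to show that $\alpha$ and $\beta$ generate $T(G_{1})$. I would analyze the restriction $\rho\colon T(G_{1}) \to T(U_{1}) \cong \bZ/2\bZ$, which is surjective since $\rho(\alpha)$ is the generator. The translation-functor short exact sequence
$$
0 \longrightarrow V(p) \longrightarrow T_{-1}^{0}(V(p-1)) \longrightarrow V(p-2) \longrightarrow 0,
$$
whose middle term is $G_{1}$-projective, gives $V(p) \cong \Omega(V(p-2))$ stably, so $[V(p)] = \alpha + \beta$ and $[V(p)] \in \ker \rho$. I claim $\ker \rho = \bZ\langle [V(p)]\rangle$: given an endotrivial $M$ restricting trivially to $U_{1}$, I would apply the Jordan-type criterion (Theorem~\ref{pi-char-endotriv}), the parametrization of $\pi$-points of $G_{1}$ by $\CN_{1}(\mathfrak{sl}_{2})$, and the transitive $SL_{2}$-action on the regular nilpotent orbit to propagate the Jordan type from the chosen $p$-point $t \mapsto e$ to every other $p$-point, forcing $M$ to be stably isomorphic to a tensor power of $V(p)$. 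Combined with surjectivity this yields a split short exact sequence
$$
0 \longrightarrow \bZ \langle [V(p)] \rangle \longrightarrow T(G_{1}) \stackrel{\rho}{\longrightarrow} \bZ/2\bZ \longrightarrow 0,
$$
split by $\beta$, whence $T(G_{1}) \cong \bZ \oplus \bZ/2\bZ$.

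The main obstacle is the identification of $\ker \rho$ as cyclic of infinite rank; the $\pi$-point propagation argument must handle the non-generic orbits in $\CN_{1}(\mathfrak{sl}_{2})$ carefully. A separate treatment is needed when $p = 2$, since there $V(p-2) = k$ and $T(U_{1}) = \{1\}$, so the $\bZ/2\bZ$ factor must be produced by another source---most naturally a self-dual endotrivial module arising from the cohomological pushout construction of Section~6 in the spirit of \cite{CMN1,CMN2}.
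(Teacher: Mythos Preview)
Your approach differs substantially from the paper's and has a genuine gap at the decisive step. The paper's proof is essentially one line: $\Dist((SL_2)_1)$ has tame representation type and its finite-dimensional indecomposable modules are explicitly classified (see \cite{CNP}, Section~3); reading off that list, the only indecomposable endotrivials are the $\Omega^m(k)$ and the $\Omega^n(L(p-2))$, and self-duality of $L(p-2)$ then gives $T(G_1)\cong\bZ\oplus\bZ/2\bZ$.

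Your identification of $\ker\rho$ with $\bZ\langle[V(p)]\rangle$ is where the argument breaks. The transitive $SL_2$-action on the regular nilpotent orbit moves $p$-points, but an arbitrary endotrivial $G_1$-module $M$ carries no $SL_2$-equivariance: conjugation relates the Jordan type of $M$ at one $p$-point only to that of the \emph{twisted} module $M^g$ at another, and $M^g\not\cong M$ in general. More fundamentally, the propagation is redundant anyway, since Theorem~\ref{pi-char-endotriv} already guarantees that every endotrivial module has constant Jordan type $[1]+n[p]$ or $[p-1]+n[p]$ at all $p$-points, and constant Jordan type does not determine a module up to stable isomorphism. Nothing in your outline rules out endotrivial $G_1$-modules lying outside $\langle\Omega(k),L(p-2)\rangle$. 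The device used for proper parabolics in Section~6---pinning down the syzygy degree via a unipotent radical $(U_J)_r$ common to several Borels---is unavailable here, since distinct Borels of $SL_2$ have trivially intersecting unipotent radicals. The tame classification is doing essential work that your Jordan-type argument cannot replace.
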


\begin{proof} The category of $G_{1}=(SL_{2})_{1}$-modules has tame representation 
type and the indecomposable module are classified (cf. \cite[Section 3]{CNP}). Using this classification, 
one see that 
$$T(G_{1})=\{\Omega^{m}(k),\ \Omega^{n}(L(p-2)):\ m,n\in {\mathbb Z}\}.$$
Moreover, $L(p-2)\cong L(p-2)^{*}$, thus 
$T(G_{1})=\langle\Omega^{1}(k),L(p-2)\rangle\cong {\mathbb Z}\oplus 
{\mathbb Z}/2{\mathbb Z}$. 
\end{proof}


\section{Rational Endotrivial Modules over $G$} 

Our aim is to investigate the question of when a rational $G$-module is 
an endotrivial module over any of the infinitesimal subgroups of $G$.
We show that if the Lie rank of $G$ is at least 2 then the (nontrivial) irreducible 
modules for $G_r$ as well as the (nontrivial) Weyl modules and tilting modules are not 
endotrivial. The situation is strikingly different for $G = SL_2$,
as seen from the previous section.

Let $S$ be an affine algebraic subgroup 
scheme of $G$, and let $M$ be a $S$-module. 
For $g\in G$, one can consider the 
twisted module $M^{g}$ which is a $S^{\prime}=g^{-1}Sg$-module. 
(cf. \cite[I. 2.15]{Jan}) In particular 
if $g$ normalizes $S$ then the twisted module $M^{g}$ becomes a $S$-module. 
Moreover, if $M$ is a $G$-module then $M^{g}\cong M$. 

For $J\subseteq \Delta$, let $L_{J}$ be the Levi subgroup of $G$ determined by $J$. 
Set $X_{J}(T)_{+}=\{\lambda\in X(T):\ 0\leq \langle \lambda,\alpha^{\vee} \rangle \ \text{for all $\alpha\in J$}\}$. 
For $\lambda\in X_{J}(T)_{+}$, one has a nonzero induced module $H^{0}_{J}(\lambda):=\text{ind}^{L_{J}}_{L_{J}\cap B}\lambda$ 
with simple $L_{J}$-socle $L_{J}(\lambda)$. Moreover, one can define the Weyl module $V_{J}(\lambda)$ with head 
$L_{J}(\lambda)$. Let $T_{J}(\lambda)$ be the indecomposable $L_{J}$-tilting module of highest weight $\lambda$. 

Following \cite{Sm} (cf. \cite[II. 5.21]{Jan}) we have a weight space decomposition: 
$$
H^{0}(\lambda)=\left(\bigoplus_{\nu\in 
{\mathbb Z}J}H^{0}(\lambda)_{\lambda-\nu}\right) \oplus M.
$$ 
where $M$ is the direct sum of all weight spaces $H^{0}(\lambda)_{\sigma}$ where 
$\sigma\neq \lambda-\nu$ for any $\nu\in {\mathbb Z}J$. Note that 
$H^{0}_{J}(\lambda)=\oplus_{\nu\in {\mathbb Z}J}H^{0}(\lambda)_{\lambda-\nu}$ and 
the aforementioned decomposition is $L_{J}$-stable. Therefore, 
\begin{equation} 
H^{0}(\lambda)|_{L_{J}}\cong H^{0}_{J}(\lambda)\oplus M. 
\end{equation} 
Dually, we have that 
\begin{equation} 
V(\lambda)|_{L_{J}}\cong V_{J}(\lambda)\oplus M^{\prime}. 
\end{equation} 
for some $L_{J}$-module $M^{\prime}$. 
The same argument works for the simple module $L(\lambda)$ and there is 
a decomposition: 
\begin{equation} 
L(\lambda)|_{L_{J}}\cong L_{J}(\lambda)\oplus N. 
\end{equation} 
for some $L_{J}$-module $N$. 

Observe that $L(\lambda)=\text{soc}_{G}(H^{0}(\lambda))$ which 
implies that $\text{soc}_{L_{J}}L(\lambda)\subseteq \text{soc}_{L_{J}}(H^{0}(\lambda))$. 
Note that 
$L_{J}(\lambda)=\text{soc}_{L_{J}}(H^{0}_{J}(\lambda))\subseteq 
\text{soc}_{L_{J}}(H^{0}(\lambda))$.
Moreover, $L_{J}(\lambda)$ appears as an
$L_{J}$-composition factor of $L(\lambda)$ and 
$H^{0}(\lambda)$ with multiplicity one. 
Therefore, $L_{J}(\lambda)$ must appear  
in $\text{soc}_{L_{J}}L(\lambda)$. By a dual argument, we 
can use the decomposition 
$$V(\lambda)|_{L_{J}}\cong V_{J}(\lambda)\oplus N$$  
to show that $L_{J}(\lambda)$ appears in the 
head of $L(\lambda)|_{L_{J}}$. The fact that 
$L_{J}(\lambda)$ has multiplicity one in $L(\lambda)$ now shows that $L_{J}(\lambda)$ 
is a $L_{J}$-direct summand of $L(\lambda)$.

Finally, we observe that $T(\lambda)|_{L_{J}}$ is a tilting module for the Levi subgroup 
$L_{J}$ and we have that
\begin{equation} 
T(\lambda)|_{L_{J}}\cong T_{J}(\lambda)\oplus T
\end{equation} 
for some $L_{J}$-tilting module $T$.

We first show that the nontrivial induced modules and Weyl 
modules for groups of Lie rank greater than two 
are not endotrivial. 

\begin{thm}  \label{rk2}
Assume that $p>2$. If 
$\operatorname{rank }G=2$ (i.e., $\Phi=A_{2}$, $B_{2}$ 
or $G_{2}$), then the only induced module $H^{0}(\lambda)$ (resp. Weyl module 
$V(\lambda)$) which is endotrivial over $G_{r}$ is the trivial module. 
\end{thm}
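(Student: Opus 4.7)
The plan is to argue by contradiction: assume that $H^0(\lambda)$ is endotrivial over $G_r$ for some nonzero dominant weight $\lambda$, and derive a contradiction from the rank-two hypothesis combined with $p>2$. The strategy is to restrict to rank-one Levi subgroups and use the $SL_2$-classification of Section 7 to pin $\lambda$ down, then to use the $U_1$-classification of Section 5 to finish off.

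Fix a simple root $\alpha\in\Delta$ and set $J=\{\alpha\}$. By Proposition \ref{restr-endotriv} applied to the flat inclusion $\Dist((L_J)_r)\hookrightarrow \Dist(G_r)$, the module $H^0(\lambda)|_{(L_J)_r}$ is endotrivial. The $L_J$-decomposition $H^0(\lambda)|_{L_J}\cong H^0_J(\lambda)\oplus M$ displayed in this section is a fortiori an $(L_J)_r$-module decomposition. An endotrivial module has (modulo projectives) a unique non-projective indecomposable summand; hence one of $H^0_J(\lambda)|_{(L_J)_r}$ or $M|_{(L_J)_r}$ must be $(L_J)_r$-projective while the other carries the endotrivial class.

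If $H^0_J(\lambda)|_{(L_J)_r}$ carries the endotrivial class, one further restricts to the semisimple part $(L_J^{ss})_r$ of type $A_1$ and applies Theorem \ref{sl2-1} to force $\langle\lambda,\alpha^\vee\rangle\in\{np^r,\,np^r-2:n\ge 0\}$. In the complementary case, $H^0_J(\lambda)|_{(L_J)_r}$ is projective, so $p^r\mid \langle\lambda,\alpha^\vee\rangle+1$, together with even more stringent conditions from the projective indecomposable structure over $(SL_2)_r$. Running this analysis for both simple roots of $G$ narrows the pair $(a,b)=(\langle\lambda,\alpha_1^\vee\rangle,\langle\lambda,\alpha_2^\vee\rangle)$ to a small list. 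To identify which of the two cases holds, one tracks the connected center $Z_J\subset L_J$: the summand $H^0_J(\lambda)$ lives at a single $Z_J$-weight, while $M$ lies in other $Z_J$-weights, and the $(Z_J)_r$-characters (together with matching the highest-weight vector $v_\lambda$, which sits inside $H^0_J(\lambda)$) identify the non-projective piece unambiguously in most cases.

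Having narrowed down $(a,b)$, one further restricts along $U_1\hookrightarrow G_1\hookrightarrow G_r$. Since $p>2$ excludes the exceptional rank-two cases $\Phi=A_2,B_2$ with $p=2$, Theorem \ref{unipotent3} gives $T(U_1)\cong\bZ$ generated by $\Omega(k)$, whence $H^0(\lambda)|_{U_1}\cong\Omega^s(k)\oplus\proj$ for some $s$. Combined with the Weyl dimension formula
\[
\dim H^0(\lambda)=\prod_{\beta\in\Phi^+}\frac{\langle\lambda+\rho,\beta^\vee\rangle}{\langle\rho,\beta^\vee\rangle}
\]
and with the divisibility obligation $p^{r\dim L_J}\mid\dim M$ (or $p^{r\dim L_J}\mid\dim H^0_J(\lambda)$) arising from the projectivity of the complementary summand, a direct case-by-case calculation in each of $\Phi=A_2$, $B_2$, $G_2$ shows that no nonzero $\lambda$ meets all the requirements. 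The statement for the Weyl module $V(\lambda)$ is dual, using the decomposition $V(\lambda)|_{L_J}\cong V_J(\lambda)\oplus M'$ in place of the induced one.

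The main obstacle is the final combinatorial check: simultaneously reconciling the arithmetic conditions on $(a,b)$ coming from Theorem \ref{sl2-1}, the divisibility conditions coming from the projectivity of the complementary summand, and the $U_1$-syzygy constraint via the Weyl dimension formula, in each of the three rank-two root systems. The hypothesis $\operatorname{rank} G=2$ is used precisely here: two simple roots suffice to overconstrain $\lambda$, while Theorem \ref{unipotent3} provides the clean $U_1$-classification that fails in the excluded characteristic-two cases. A secondary subtlety is the unambiguous identification of which summand in the Levi decomposition carries the endotrivial class; this requires a careful book-keeping with the central torus $Z_J$ rather than just a formal Krull-Schmidt appeal.
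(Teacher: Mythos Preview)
Your approach diverges from the paper's and, as you yourself flag, leaves the decisive step undone. You reduce to a list of arithmetic constraints on $(\langle\lambda,\alpha_1^\vee\rangle,\langle\lambda,\alpha_2^\vee\rangle)$ coming from the $SL_2$-classification on each rank-one Levi, together with a $U_1$-syzygy condition, and then defer everything to ``a direct case-by-case calculation''. But that calculation is the whole proof, and it is not clear it closes. For instance, in type $A_2$ the pair $\lambda=(np^r,mp^r)$ survives both of your Levi constraints (each $H^0_{J_i}(\lambda)$ is endotrivial over $(L_{J_i})_r$ by Theorem~\ref{sl2-1}), and the Weyl dimension $\tfrac{1}{2}(np^r{+}1)(mp^r{+}1)((n{+}m)p^r{+}2)$ does not obviously violate any of your stated divisibility conditions. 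Your appeal to $T(U_1)\cong\bZ$ only tells you that the $U_1$-restriction is some $\Omega^s(k)\oplus\proj$; without computing $s$ this adds nothing beyond the congruence on $\dim H^0(\lambda)$ that endotriviality already gives. The bookkeeping with $Z_J$ to decide which summand is projective is also left vague.

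The paper bypasses all of this with a single device you are missing: Weyl-group twisting. Since $H^0(\lambda)$ is a $G$-module, $H^0(\lambda)^w\cong H^0(\lambda)$ for any $w\in W$, but twisting the Levi decomposition $H^0(\lambda)|_{L_J}\cong H^0_J(\lambda)\oplus M$ by a suitable $w$ (chosen so that $wL_Jw^{-1}$ is a Levi one can compare with) produces a \emph{second} indecomposable summand $H^0_J(\lambda)^w$ of $H^0(\lambda)$ over the same Levi, with different highest weight. In type $A_2$ one conjugates the two rank-one Levis to a common $L$; in types $B_2$ and $G_2$ one uses a reflection in a root orthogonal to $\alpha_i$, which normalises $L_{\{\alpha_i\}}$. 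After first disposing of the case $\lambda_i\equiv -1\pmod{p^r}$ by a dimension divisibility, both summands are non-projective over $(L)_r$, and an endotrivial module cannot have two non-projective indecomposable summands. No $U_1$-analysis, no dimension formula, and no residual case-check are needed.
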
   

\begin{proof} We first note that if a 
module is endotrivial then the dual of the 
module is also endotrivial so the statement 
of the theorem for induced modules will 
imply the statement for Weyl modules. 

Let us first consider the case when 
$\Phi=A_{2}$ with $p>2$. Let $\lambda = (\lambda_1,\lambda_2)$.  
Observe that 
if $\lambda_{1}$ or $\lambda_{2}$ has 
the form $np^{r}-1$ then by Weyl's Dimension 
Formula, $p\mid \dim H^{0}(\lambda)$ 
thus $H^{0}(\lambda)$ cannot be endotrivial. 
So we can assume that neither $\lambda_{1}$ 
or $\lambda_{2}$ has the form $np^{r}-1$. 
Let $J_{1}=\{\alpha_{1}\}$ and 
$J_{2}=\{\alpha_{2}\}$. Then we can consider the 
following decompositions: 
\begin{equation} 
H^{0}(\lambda)|_{L_{J_{1}}}\cong H^{0}_{J_{1}}(\lambda)\oplus M_{1},
\end{equation} 
\begin{equation} 
H^{0}(\lambda)|_{L_{J_{2}}}\cong H^{0}_{J_{2}}(\lambda)\oplus M_{2}.
\end{equation} 
Set $w_{1}=s_{\alpha_{2}}$ and 
$w_{2}=s_{\alpha_{1}}$. Then $L=w_{1}(L_{J_{1}})=w_{2}(L_{J_{2}})$ where 
$L/[L,L]$ has type $A_{1}$ and contains 
the root subgroup $U_{\alpha_{1}+\alpha_{2}}$. The 
aforementioned decompositions can be 
twisted by $w_{1}$ (resp. $w_{2}$) to obtain 
\begin{equation} 
H^{0}(\lambda)|_{L}\cong H^{0}_{J_{1}}(\lambda)^{w_{1}}\oplus M_{1}^{w_{1}},
\end{equation} 
\begin{equation} 
H^{0}(\lambda)|_{L}\cong H^{0}_{J_{2}}(\lambda)^{w_{2}}\oplus M_{2}^{w_{2}}. 
\end{equation} 
Observe that $H^{0}_{J_{i}}(\lambda)^{w_{i}}$ 
for $i=1,2$ are indecomposable nonisomorphic $L$-module
whose characters are not equal, unless $\lambda_{1}=\lambda_{2}=0$. Thus, 
$H^{0}_{J_{i}}(\lambda)^{w_{i}}$ for $i=1,2$ 
must appear as indecomposable $L$-summands of 
$H^{0}(\lambda)|_{L}$. If $H^{0}(\lambda)|_{G_{r}}$ is 
an endotrivial module, then $H^{0}(\lambda)|_{L_{r}}$ 
is the direct sum of an endotrivial $L_{r}$-module and
a projective $L_{r}$-module. But, $H^{0}(\lambda)|_{L_{r}}$ contains summands 
$H^{0}_{J_{i}}(\lambda)^{w_{i}}|_{L_{r}}$ for $i=1,2$, 
neither of which is projective 
since $\lambda_{i}$ is not of the form $np^{r}-1$ for $i=1,2$. 

For type $B_{2}$, note that $\lambda_{1}$ and $\lambda_{2}$ cannot both 
have the form $np^{r}-1$. For otherwise,  
$p\mid \dim H^{0}(\lambda)$, and hence $H^{0}(\lambda)$ could 
not be endotrivial. 
Suppose that $\lambda_{1}$ is not a multiple 
of $p^{r}-1$. Let $w=s_{\alpha_{1}+\alpha_{2}}$ and 
$J=\{\alpha_{1}\}$.  First note that 
$$
H^{0}(\lambda)|_{L_{J}}\cong H^{0}_{J}(\lambda)\oplus M.
$$
Because $\alpha_{1}+\alpha_{2}$ is orthogonal to 
$\alpha_{1}$, $w(L_{J})=L_{J}$, and 
$H^{0}(\lambda)\cong H^{0}(\lambda)^{w}$ as $G$-modules, 
$$
H^{0}(\lambda)|_{L_{J}}\cong H^{0}_{J}(\lambda)^{w}\oplus M^{w}.
$$
The highest weight of $H^{0}_{J}(\lambda)^{w}$ 
is $w\lambda$. This is given by the 
formula 
\begin{equation} 
s_{\alpha_{1}+\alpha_{2}}\lambda=\lambda-\langle 
\lambda,(\alpha_{1}+\alpha_{2})^{\vee} \rangle 
(\alpha_{1}+\alpha_{2})=(\lambda_{1},-\lambda_{1}-\lambda_{2}). 
\end{equation}
This demonstrates that the indecomposable $L_{J}$-modules 
$H^{0}_{J}(\lambda)$ and $H^{0}_{J}(\lambda)^{w}$ both 
appear as summands of $H^{0}(\lambda)|_{L_{J}}$. 
But, if $H^{0}(\lambda)|_{G_{r}}$ is 
endotrivial then $H^{0}(\lambda)_{L_{J}}$ can have 
at most one endotrivial direct summand 
which can only occur if $\lambda_{1}=\lambda_{2}=0$. 
A similar argument can be used for the 
case when $\lambda_{2}$ is not a multiple of $p^{r}-1$. 
In this case, one can take $J=\{\alpha_{2}\}$ with 
$w=s_{2\alpha_{1}+\alpha_{2}}$, so that  
\begin{equation} 
s_{2\alpha_{1}+\alpha_{2}}\lambda=(-\lambda_{1}-2\lambda_{2},\lambda_{2}). 
\end{equation}

The argument for $\Phi=G_{2}$ follows 
the same line of reasoning as in the $B_{2}$-case with 
the existence of orthogonal roots. 
When $J=\{\alpha_{1}\}$ (resp. $J=\{\alpha_{2}\}$), 
take $w=s_{3\alpha_{1}+2\alpha_{2}}$ 
(resp. $w=s_{2\alpha_{1}+\alpha_{2}}$) and observe that 
\begin{equation} 
s_{3\alpha_{1}+2\alpha_{2}}\lambda=(\lambda_{1},-\lambda_{1}-\lambda_{2}),  
\end{equation}
\begin{equation} 
s_{2\alpha_{1}+\alpha_{2}}\lambda=(-\lambda_{1}-3\lambda_{2},\lambda_{2}). 
\end{equation}
\end{proof} 

\begin{thm} \label{weyl7}
Suppose that $p>2$ and that $\operatorname{rank }G\geq 2$. 
Then the only induced module $H^{0}(\lambda)$ (resp. Weyl module 
$V(\lambda)$) which is endotrivial over $G_{r}$ is the trivial module. 
\end{thm}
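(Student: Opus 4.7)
\textbf{My plan} is to reduce the proof to Theorem~\ref{rk2} by restricting to rank-2 connected Levi subgroups. As in Theorem~\ref{rk2}, passing to duals lets me treat only the induced module statement. First, I reduce to the case where $G$ is simple: if $G = G_1 \times \cdots \times G_t$ with each $G_i$ simple, then $H^0_G(\lambda) \cong H^0_{G_1}(\lambda_1) \boxtimes \cdots \boxtimes H^0_{G_t}(\lambda_t)$, and restricting an endotrivial $H^0_G(\lambda)$ along the inclusion $G_{j,r} \hookrightarrow G_r$ yields an endotrivial $G_{j,r}$-module isomorphic to $H^0_{G_j}(\lambda_j)^{\oplus d_j}$ with $d_j = \prod_{i \neq j} \dim H^0_{G_i}(\lambda_i)$; since a direct sum of multiple copies of the same non-projective indecomposable cannot be endotrivial, we must have $d_j = 1$, forcing $\lambda_i = 0$ for $i \neq j$ and, iterating, $\lambda = 0$. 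The rank-2 case is precisely Theorem~\ref{rk2}, so I assume $G$ is simple of rank $\ell \geq 3$ and $H^0(\lambda)$ is endotrivial over $G_r$ with $\lambda \neq 0$, aiming for a contradiction.

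For any connected two-element subset $J = \{\alpha, \alpha'\} \subseteq \Delta$ the derived Levi $L'_J = [L_J, L_J]$ is simple of type $A_2$, $B_2$, or $G_2$. Smith's decomposition gives $H^0(\lambda)|_{L_J} \cong H^0_J(\lambda) \oplus M$ as $L_J$-modules, which remains a direct sum after restriction to $(L_J)_r$. Since $H^0(\lambda)|_{(L_J)_r}$ is endotrivial by Proposition~\ref{restr-endotriv}, it has the form $N \oplus P$ with $N$ indecomposable endotrivial and $P$ projective; the Krull--Schmidt theorem then forces $H^0_J(\lambda)|_{(L_J)_r}$ to be either endotrivial (if it contains $N$) or projective (otherwise). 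Restricting further to $(L'_J)_r$ preserves both properties, so $H^0_{L'_J}(\lambda|_{L'_J})|_{(L'_J)_r} \cong H^0_J(\lambda)|_{(L'_J)_r}$ (up to an $(L'_J)_r$-trivial central twist) is also endotrivial or projective. By Theorem~\ref{rk2} applied to $L'_J$, endotriviality forces $\lambda|_{L'_J} = 0$; by the identity $H^0((p^r-1)\rho + p^r\mu) \cong \operatorname{St}_r \otimes H^0(\mu)^{(r)}$ and the standard characterization of projective induced modules over $(L'_J)_r$ as those with highest weight in $(p^r - 1)\rho_{L'_J} + p^r X(T_{L'_J})_+$, projectivity forces $\langle \lambda, \alpha^\vee \rangle = n_\alpha p^r - 1$ with $n_\alpha \geq 1$ for all $\alpha \in J$.

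Setting $Z = \{\alpha \in \Delta : \langle \lambda, \alpha^\vee \rangle = 0\}$ and $S = \{\alpha \in \Delta : \langle \lambda, \alpha^\vee \rangle = n_\alpha p^r - 1,\ n_\alpha \geq 1\}$, the dichotomy applied to every connected pair $J$ shows $\Delta = Z \sqcup S$ with no Dynkin edges between $Z$ and $S$. Since the Dynkin diagram of the simple group $G$ is connected, either $Z = \Delta$ (forcing $\lambda = 0$, contradicting $\lambda \neq 0$) or $S = \Delta$. In the latter case $\lambda = p^r \nu - \rho$ with $\nu = \sum_\alpha n_\alpha \omega_\alpha \geq \rho$, and the Steinberg identity gives $H^0(\lambda)|_{G_r} \cong \operatorname{St}_r^{\oplus \dim H^0(\nu - \rho)}$, which is projective of dimension divisible by $p$ and therefore not endotrivial via Theorem~\ref{pi-char-endotriv}, again a contradiction. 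Hence $\lambda = 0$, completing the proof.

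\textbf{Main obstacle.} The heart of the argument is the Krull--Schmidt passage from endotriviality of $H^0(\lambda)|_{(L_J)_r}$ to the endotrivial-or-projective dichotomy for its $L_J$-Smith summand $H^0_J(\lambda)|_{(L_J)_r}$, which relies crucially on Smith's decomposition being a genuine $L_J$-module direct sum rather than merely a filtration by induced modules. Combined with the characterization of projective induced modules over $(L'_J)_r$ in terms of the Steinberg translate, this reduction collapses the proof uniformly across all simple types in rank $\geq 3$, avoiding any case-by-case weight combinatorics.
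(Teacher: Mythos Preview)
Your proof is correct and follows essentially the same route as the paper: restrict to rank-$2$ Levi subgroups via Smith's decomposition and invoke Theorem~\ref{rk2} for every connected pair $J\subseteq\Delta$ of simple roots to force $\lambda_i=\lambda_j=0$, then iterate over all adjacent pairs. Your treatment is in fact more thorough than the paper's---the paper simply asserts that the Smith summand $H^0_J(\lambda)$ must be endotrivial over $(L_J)_r$ without discussing the projective alternative, and does not separately reduce to simple $G$---so your endotrivial/projective dichotomy together with the Dynkin connectedness argument and the Steinberg factorization in the $S=\Delta$ case fills in details the paper leaves implicit.
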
   

\begin{proof} It suffices to prove that statement for the induced modules 
$H^{0}(\lambda)$ with $\lambda=\sum_{i=1}^{n}\lambda_{i}\omega_{i}$. 
Choose adjacent simple roots $\alpha_{i}$, $\alpha_{j}$ in the Dynkin 
diagram and set $J=\{\alpha_{i},\alpha_{j}\}\subseteq \Delta$. 
Now 
$$H^{0}(\lambda)|_{L_{J}}\cong H^{0}_{J}(\lambda)\oplus M.$$ 
Therefore, $H^{0}_{J}(\lambda)$ must be endotrivial over $(L_{J})_{r}$. 
From the preceding theorem, we have $\lambda_{i}=\lambda_{j}=0$. Since 
this occurs for all pairs of adjacent simple roots, we conclude that $\lambda=0$. 
\end{proof} 

The next result demonstrates that the restrictions of simple nontrivial $G$-modules 
for semisimple groups of rank larger that 
1 cannot be endotrivial modules for $G_{r}$. 
Note that there is no restriction on the prime in this result. 

\begin{thm} Suppose that $\operatorname{rank }G\geq 2$. 
Then the only irreducible $G$-module $L(\lambda)$ 
which is endotrivial over $G_{r}$ is the trivial module. 
\end{thm}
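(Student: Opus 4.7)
The plan is to adapt the Weyl-twist argument of Theorem \ref{rk2} to simple modules, using the decomposition $L(\lambda)|_{L_J} \cong L_J(\lambda) \oplus N$ recalled before the theorem.

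First I would reduce to $\lambda \in X_r(T)$ via Steinberg's tensor product theorem. Writing $\lambda = \nu + p^r\mu$ with $\nu \in X_r(T)$ and $\mu \in X(T)_+$, one has $L(\lambda) \cong L(\nu) \otimes L(\mu)^{(r)}$, so $L(\lambda)|_{G_r} \cong L_r(\nu)^{\oplus \dim L(\mu)}$ because $L(\mu)^{(r)}|_{G_r}$ is a trivial module. Since $L_r(\nu)$ is indecomposable and an endotrivial module has a unique indecomposable non-projective summand (up to isomorphism), $\dim L(\mu) = 1$, forcing $\mu = 0$.

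Now assume $\lambda \in X_r(T) \setminus \{0\}$. Using $\operatorname{rank} G \geq 2$, pick two distinct simple roots $\alpha_i, \alpha_j$ with $\langle \lambda, \alpha_i^\vee \rangle \neq 0$. I would then run the argument of Theorem \ref{rk2} verbatim, substituting each occurrence of $H^0_J(\lambda)$ by $L_J(\lambda)$: choose Weyl-group elements $w_1, w_2$ (case-by-case per root-system type as in Theorem \ref{rk2}) so that $L := w_1(L_{\{\alpha_i\}}) = w_2(L_{\{\alpha_j\}})$ is a common conjugate rank-one Levi, and combine the $L_J$-decompositions with the $G$-isomorphisms $L(\lambda) \cong L(\lambda)^{w_k}$ to obtain
\[
L(\lambda)|_L \;\supseteq\; L_{\{\alpha_i\}}(\lambda)^{w_1} \,\oplus\, L_{\{\alpha_j\}}(\lambda)^{w_2},
\]
with the two simple $L$-summands on the right carrying distinct highest weights whenever $\lambda$ is nonzero, via the reflection formulas used in the proof of Theorem \ref{rk2}.

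By Proposition \ref{restr-endotriv}, $L(\lambda)|_{L_r}$ is endotrivial over $L_r$ and therefore admits at most one indecomposable non-projective summand. Consequently one of the two simple summands above must be projective over $L_r$ while the other is itself endotrivial over $L_r$. Projectivity of a simple $L_r$-module forces its $[L,L]\cong SL_2$ component of the highest weight to be $p^r-1$; by the corollary following Theorem \ref{sl2-1}, endotriviality of a simple module over $(SL_2)_r$ forces $r=1$ and the corresponding inner product to lie in $\{0, p-2\}$. The main obstacle is organizing these root-theoretic constraints uniformly across all simple root-system types and handling characteristic $p=2$, where the $SL_2$ torsion class $L(p-2)$ collapses to $L(0)$; the rank $\geq 2$ hypothesis supplies enough pairs of adjacent simple roots for the resulting Steinberg-vs-endotrivial conditions to become mutually inconsistent for every nonzero $\lambda$.
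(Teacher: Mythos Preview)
Your overall strategy coincides with the paper's: reduce to restricted weights by Steinberg, then adapt the Weyl-twist machinery of Theorem~\ref{rk2} to the decomposition $L(\lambda)|_{L_J}\cong L_J(\lambda)\oplus N$. Two genuine gaps remain.

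The passage to arbitrary rank is not carried out. You invoke ``case-by-case per root-system type as in Theorem~\ref{rk2}'', but that theorem handles only $A_2,B_2,G_2$; for higher rank you never specify $w_1,w_2$. The paper resolves this cleanly by first restricting to a rank-two Levi $L_J$ with $J=\{\alpha_i,\alpha_j\}$ adjacent (exactly as in Theorem~\ref{weyl7}), so that $L_J(\lambda)$ is a summand of $L(\lambda)|_{L_J}$, and then applies the rank-two analysis inside $L_J$. Your final sentence gestures at this but does not actually perform the reduction.

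More seriously, the $p=2$ case cannot be closed by root-theoretic constraints alone. Take $\Phi=A_2$, $p=2$, $r=1$, and $\lambda=(1,0)$. For every rank-one Levi the restriction of $L(1,0)$ is a two-dimensional Steinberg plus a one-dimensional trivial module, which \emph{is} endotrivial; the Weyl twist produces exactly these two summands, one projective and one endotrivial, so no contradiction arises. Your hoped-for ``mutual inconsistency'' simply does not occur here, yet $L(1,0)$ is not endotrivial over $G_1$ since $\dim L(1,0)=3\not\equiv\pm1\pmod{8}$. The paper therefore abandons the twist argument at $p=2$ and instead checks directly, for each rank-two type, that $\dim L(\lambda)\not\equiv\pm1\pmod{2^{\,r|\Phi^+|}}$ for every nonzero $\lambda\in X_1(T)$. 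That dimension count is the missing ingredient in your plan.
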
   

\begin{proof} Recall first from the proof of Corollary 6.4(a), that we can assume that 
$\lambda\in X_{1}(T)$. 
Now let $J=\{\alpha_{i}\}$ where $\alpha_{i}\in \Delta$. Then 
$$
L(\lambda)|_{(L_{J})_{1}}\cong L_{J}(\lambda)\oplus M.
$$
where $L_{J}(\lambda)$ must either be endotrivial or projective. Our 
analysis of the $A_{1}$ case implies that $\lambda_{i}=0$, $p-2$ or $p-1$. 

One can now use the rank 2 argument involving twisting the decomposition 
$$
L(\lambda)|_{L_{J}}\cong L_{J}(\lambda)\oplus M
$$ 
via Weyl group elements given for induced modules and Weyl modules in 
the proof of Theorem \ref{rk2} to 
show that the endotriviality of $L(\lambda)|_{G_{r}}$ 
implies that $\lambda=0$ 
for $p\geq 3$. The remaining $p=2$ case for $\Phi=A_{2}$ (resp. $B_{2}$, $G_{2}$) can be handled 
as follows. We can enumerate the different 
possibilities for $\lambda\in X_{1}(T)$. The module $L(1,0)$, $L(0,1)$, $L(1,1)$ have dimensions 
which are never congruent to $\pm 1$, modulo $2^{3r}$ (resp. $2^{4r}$, $2^{6r}$), 
thus cannot be endotrivial 
over $G_{r}$. 

Finally, one can use the argument presented in 
the proof of Theorem \ref{weyl7} to handle 
the general case by restricting to irreducible 
rank $2$ subroot systems of $\Phi$ to 
show that $\lambda=0$. 
\end{proof} 

We now show that endotrivial modules rarely 
occur as restrictions of tilting modules. 
Note that if a tilting module $T(\lambda)$, which satisfies the condition that 
$-w_{0}\lambda=\lambda$, were endotrivial, 
then this module would generate a subgroup of order 2 
inside of $T(G_{r})$ because $T(\lambda)\cong T(\lambda)^{*}$. 

\begin{thm} \label{tilting7}
Let $\lambda\in X(T)_{+}$ and let $T(\lambda)$ be the corresponding 
indecomposable tilting module. Suppose that $\operatorname{rank }G\geq 2$ with $p\geq h$. 
Then the only indecomposable tilting module $T(\lambda)$ 
which is endotrivial over $G_{r}$ is the trivial module. 
\end{thm}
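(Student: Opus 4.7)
The plan is to mirror the three-step template used for induced and Weyl modules in Theorems \ref{rk2} and \ref{weyl7}: first reduce to the rank-two case by restricting to Levi subgroups corresponding to pairs of adjacent simple roots, then dispose of the rank-two case by Weyl group twisting against a rank-one Levi, and finally conclude that $\lambda$ vanishes on every simple root by varying the chosen pair.

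For the reduction, assume that $T(\lambda)|_{G_r}$ is endotrivial and let $J=\{\alpha_i,\alpha_j\}\subseteq\Delta$ be any pair of adjacent simple roots. By Proposition \ref{restr-endotriv}, $T(\lambda)|_{(L_J)_r}$ remains endotrivial, and the decomposition
\[
T(\lambda)|_{L_J}\cong T_J(\lambda)\oplus T
\]
from (7.4) exhibits $T_J(\lambda)$ as a genuine indecomposable $L_J$-summand. Since an endotrivial module has at most one non-projective indecomposable summand up to projectives, $T_J(\lambda)|_{(L_J)_r}$ is either projective or itself endotrivial. Once we establish that, in the rank-two case, the latter alternative forces $\lambda_i=\lambda_j=0$, the theorem follows by running $J$ through every adjacent pair, since rank $G\geq 2$.

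For the rank-two case I will imitate the proof of Theorem \ref{rk2}. Pick a single simple root $J'=\{\alpha_k\}\subset J$ and a Weyl element $w$ normalizing $L_{J'}$ so that $w\lambda\neq\lambda$ whenever $(\lambda_i,\lambda_j)\neq(0,0)$; the specific choices $s_{\alpha_2}$ and $s_{\alpha_1}$ for $A_2$, $s_{\alpha_1+\alpha_2}$ and $s_{2\alpha_1+\alpha_2}$ for $B_2$, and $s_{3\alpha_1+2\alpha_2}$ and $s_{2\alpha_1+\alpha_2}$ for $G_2$ carry over verbatim from that proof. Since $T(\lambda)$ is a $G$-module, twisting the decomposition over $L_{J'}$ gives both $T_{J'}(\lambda)$ and $T_{J'}(\lambda)^w$ as summands of $T(\lambda)|_{L_{J'}}$; as these are indecomposable $L_{J'}$-tilting modules with distinct highest weights $\lambda$ and $w\lambda$, they are non-isomorphic. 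If both are non-projective over $(L_{J'})_r$, they contradict the uniqueness of the non-projective indecomposable summand of the endotrivial module $T(\lambda)|_{(L_{J'})_r}$.

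The main obstacle is ensuring that both twisted summands really are non-projective over $(L_{J'})_r$, so that the contradiction bites. Since the semisimple part of $L_{J'}$ is $SL_2$, the $SL_2$ tilting classification preceding Corollary 6.4 tells us exactly when the summand is projective (essentially when the $\alpha_k^{\vee}$-coordinate meets or exceeds $p^r-1$). The hypothesis $p\geq h$ enters at precisely this point: it guarantees (via the Donkin-type structure results for tilting modules together with the fact that $T((p-1)\rho)=L((p-1)\rho)$ is the Steinberg module, which is $G_1$-projective–injective when $p\geq h$) that if $\lambda$ is large enough for every rank-one restriction to be projective, then $T(\lambda)|_{G_r}$ itself is projective, hence not endotrivial. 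In the remaining bounded regime the twisted non-projectivity holds and the Weyl-twist contradiction completes the argument.
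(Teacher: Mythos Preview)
Your overall strategy matches the paper's—reduce to rank two via Levi restriction, then to rank one with a Weyl twist—but your treatment of where the hypothesis $p\geq h$ enters has a genuine gap. The dichotomy you set up, namely ``either every rank-one Levi restriction $T_{J'}(\lambda)$ is projective (forcing $T(\lambda)|_{G_r}$ itself to be projective) or we are in the bounded regime,'' only yields that \emph{some} coordinate $\lambda_k\leq p-2$, not that all of them are. That is not enough for the rest of the argument. In the reduction step you need $T_J(\lambda)$ to be non-projective over $(L_J)_r$ for your chosen rank-two $J=\{\alpha_i,\alpha_j\}$, which already requires both $\lambda_i,\lambda_j\leq p-2$. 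Worse, in the rank-two Weyl twist the second summand $T_{J'}(\lambda)^{w}$ has highest weight $w\lambda$, whose $\alpha_k^{\vee}$-coordinate can exceed $p-1$ even when $\lambda_k$ itself is small; for instance in type $A_2$ with $J'=\{\alpha_1\}$ and $w=s_{\alpha_2}$ this coordinate is $\lambda_1+\lambda_2$. So the twisted summand may well be projective and the contradiction does not bite.

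The paper closes this gap differently and more directly. Before any Levi restriction is taken, it invokes \cite[Theorem~5.3]{CLNP} to conclude that under $p\geq h$, if any coordinate $\lambda_j$ is too large then $p$ divides $\dim T(\lambda)$, so $T(\lambda)$ cannot be endotrivial over $G_r$. This forces \emph{every} $\lambda_j\leq p-2$ at the outset. With that global bound in hand, for any adjacent pair $J=\{\alpha_i,\alpha_j\}$ the summand $T_J(\lambda)$ is genuinely non-projective over $(L_J)_r$ by \cite[Lemma~E.8]{Jan}, hence must be endotrivial, and the rank-two case then gives $\lambda_i=\lambda_j=0$. The missing ingredient in your sketch is precisely this dimension-divisibility input that pins down all coordinates of $\lambda$ simultaneously; your appeal to ``Donkin-type structure results'' and the Steinberg module gives only the much weaker statement that not every coordinate can be large.
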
   

\begin{proof} For the rank two cases, one can now use 
the argument given in Theorem \ref{rk2} and the 
rank one analysis above, to prove that if $T(\lambda)|_{G_{r}}$ is endotrivial 
for $\Phi=A_{2}$, $B_{2}$ or $G_{2}$, then $\lambda=0$. 

For the general case, we first note that if $p\geq h$, then 
the assumption that $T(\lambda)|_{G_{r}}$ is an endotrivial module implies 
that $0\leq \lambda_{j} \leq p-2$ for 
$j=1,2,\dots,n$. If $\lambda_{j}>p-1$ then 
$p\mid T(\lambda)$ by \cite[Theorem 5.3]{CLNP} 
and $T(\lambda)|_{G_{r}}$ is not endotrivial. 
Choose adjacent simple roots $\alpha_{i}$, $\alpha_{j}$ in the Dynkin 
diagram and set $J=\{\alpha_{i},\alpha_{j}\}\subseteq \Delta$. 
Now 
$$T(\lambda)|_{L_{J}}\cong T_{J}(\lambda)\oplus M.$$ 
Since $0\geq \lambda_{i},\lambda_{j} <p-2$, 
$T_{J}(\lambda)$ is not projective (cf. 
\cite[E.8. Lemma]{Jan}), thus must be 
endotrivial so $\lambda_{i}=\lambda_{j}=0$. As this works for any pair of adjacent
roots, we have that $\lambda=0$. 
\end{proof}


\section{Connection with Picard Groups}

In this section we indicate how our work fits in with the triangular 
geometry introduced by Balmer. Given a finite dimensional cocommutative 
Hopf algebra $A$, we denote the stable module category by
${\mathcal K}=A$-stab. This is the quotient of 
the category of $A$-modules 
by maps that factor through projective modules. 
The category ${\mathcal K}$ is a tensor 
triangulated category and thus inherits a ``tensor triangulated geometry'' as 
introduced by Balmer \cite{Bal1}. 

Let $\text{Spc}({\mathcal K})$ be the spectrum of ${\mathcal K}$. As pointed out in \cite{Bal2}, 
$\text{Spc}({\mathcal K})$ has a sheaf of commutative rings 
${\mathcal O}_{\mathcal K}$ which makes it a 
locally ringed spaced. This locally 
ringed space is denoted by $\text{Spec}({\mathcal K})$.   

In \cite[Theorem 6.3(b)]{Bal1}, it is shown that 
$$\text{Spec}(\text{$A$-stab})\cong \text{Proj}({\mathcal V}_{A}(k))
\cong \text{Proj}(\text{H}^{\bullet}(A,k)).$$ 
By interpreting Balmer's construction \cite[Construction 2.6]{Bal2} in 
our setting we have group homomorphisms. 
$$\text{Pic}(\text{Proj}({\mathcal V}_{A}(k))\stackrel{\alpha}{\leftarrow} 
\text{Pic}_{loc tr.}({\mathcal K})\hookrightarrow T(A).$$
where $\text{Pic}$ denote the group of line bundles and 
$T(A)$ is the group of endotrivial modules. Moreover, 
we have the following theorem. 

\begin{thm} Let $A$ be a finite dimensional cocommutative Hopf algebra. Then 
there exists a monomorphism of groups 
$$\beta:\operatorname{Pic}(\operatorname{Proj}({\mathcal V}_{A}(k))\otimes_{\mathbb Z}{\mathbb Z}[1/p]
\hookrightarrow T(A)\otimes_{\mathbb Z}{\mathbb Z}[1/p].$$ 
\end{thm}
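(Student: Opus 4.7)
The plan is to exhibit $\beta$ as the composite
\[
\text{Pic}(\text{Proj}(\mathcal{V}_A(k)))\otimes_{\mathbb{Z}}\mathbb{Z}[1/p] \xrightarrow{(\alpha\otimes\mathbb{Z}[1/p])^{-1}} \text{Pic}_{loc\,tr.}(\mathcal{K})\otimes_{\mathbb{Z}}\mathbb{Z}[1/p] \hookrightarrow T(A)\otimes_{\mathbb{Z}}\mathbb{Z}[1/p],
\]
using that the second map is already a monomorphism by the construction recalled before the theorem. This reduces the task to showing that $\alpha \otimes \mathbb{Z}[1/p]$ is an isomorphism of $\mathbb{Z}[1/p]$-modules, or equivalently that both $\Ker(\alpha)$ and $\Coker(\alpha)$ are $p$-torsion abelian groups. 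Once that is in hand, $\beta$ is a monomorphism automatically as the composition of an isomorphism with an injection.

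To establish the $p$-torsion claim I would appeal to Balmer's comparison exact sequence. In the tensor-triangular framework of \cite{Bal2}, the comparison map from locally trivial invertible objects in $\mathcal{K}$ to line bundles on the Balmer spectrum $\text{Spec}(\mathcal{K})$ fits into an exact sequence whose kernel and cokernel are controlled by the \v{C}ech cohomology of the sheaf $\mathcal{O}_{\mathcal{K}}^{\times}/\mathcal{O}^{\times}$ on $\text{Proj}(\HHH^{\bullet}(A,k))$; this sheaf measures the discrepancy between units of the graded endomorphism sheaf of the unit object and units of its degree-zero reduction. For $A$ a finite-dimensional cocommutative Hopf algebra over $k$ of characteristic $p$, a standard characteristic-$p$ argument shows that this obstruction sheaf is pro-$p$: any element of the form $1+n$ with $n$ nilpotent (or of strictly positive degree after homogeneous localization) satisfies $(1+n)^{p^{k}} = 1 + n^{p^{k}} = 1$ for $k$ large enough, hence has $p$-power order. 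The Friedlander--Suslin finite generation of $\HHH^{\bullet}(A,k)$ for finite-dimensional cocommutative Hopf algebras guarantees that $\text{Proj}(\HHH^{\bullet}(A,k))$ is noetherian and that the low-degree \v{C}ech cohomology of such a pro-$p$ sheaf is itself a $p$-torsion abelian group. Tensoring with $\mathbb{Z}[1/p]$ therefore kills both $\Ker(\alpha)$ and $\Coker(\alpha)$, as required.

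The main obstacle is making this sheaf-theoretic argument fully rigorous in the generality of an arbitrary finite-dimensional cocommutative Hopf algebra, rather than the group-algebra case in which Balmer's comparison theorem is most cleanly stated. A more concrete alternative route, which I would also pursue, is to prove the two required statements directly: first, that for every line bundle $L$ on $\text{Proj}(\mathcal{V}_A(k))$ there exists an integer $N$ coprime to $p$ such that $L^{\otimes N}$ lies in the image of $\alpha$; and second, that every locally trivial invertible object of $\mathcal{K}$ in $\Ker(\alpha)$ has $p$-power order in $T(A)$. These two lifting/descent statements together are exactly what is needed to deduce that $\alpha \otimes \mathbb{Z}[1/p]$ is an isomorphism, completing the construction of the monomorphism $\beta$.
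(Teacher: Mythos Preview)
The paper does not actually prove this theorem. It sets up the two homomorphisms
\[
\text{Pic}(\text{Proj}(\mathcal{V}_A(k))) \xleftarrow{\ \alpha\ } \text{Pic}_{loc\,tr.}(\mathcal{K}) \hookrightarrow T(A)
\]
by citing Balmer's construction \cite[Construction 2.6]{Bal2}, then simply states the theorem as a consequence, with no further argument. The only subsequent comment is that in the finite-group case Balmer even gets an isomorphism after tensoring with $\mathbb{Q}$, via the gluing techniques of \cite{BF} and \cite{BBC}.

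Your outline therefore goes well beyond what the paper supplies. The strategy you propose---show that $\alpha\otimes\mathbb{Z}[1/p]$ is an isomorphism by arguing that $\Ker(\alpha)$ and $\Coker(\alpha)$ are $p$-primary, then compose with the (flat-base-changed) inclusion into $T(A)\otimes\mathbb{Z}[1/p]$---is precisely the mechanism underlying Balmer's comparison, so your approach is consistent with what the paper is implicitly invoking. Your own caveat is the right one: the passage from the finite-group case to an arbitrary finite-dimensional cocommutative Hopf algebra is exactly where one has to check that the inputs (identification of $\text{Spec}(\mathcal{K})$, noetherianity via Friedlander--Suslin, and the $p$-torsion of the unit obstruction) all hold. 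The paper takes this for granted and offers no details, so there is nothing further to compare your argument against.
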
 

We remark that Balmer proves using the gluing techniques of Balmer-Favi \cite{BF} and
Balmer-Benson-Carlson \cite{BBC} that 
$\beta$ becomes an isomorphism when $A=kG$ where $G$ is a finite group 
after tensoring by ${\mathbb Q}$ (cf. \cite[Theorem 4.7]{Bal2}). 
It is still open if 
this holds for arbitrary finite group schemes. 

Let us look at two examples in the setting of infinitesimal group schemes. 

\begin{exm} Let $A=\text{Dist}(G_{1})$ where $G=SL_{2}$. We have seen that $T(A)={\mathbb Z}\oplus {\mathbb Z}/2{\mathbb Z}$. 
On the other hand, if $p\geq 3$, then ${\mathcal V}_{A}(k)\cong {\mathcal N}$ where ${\mathcal N}$ 
is the set of $2\times 2$ nilpotent matrices. In this case, $\text{Pic}(\text{Proj}({\mathcal N}))\cong {\mathbb Z}$. 
This shows that the Picard group and the endotrivial groups do not identify themselves on the integral level. 
\end{exm} 

\begin{exm} Let $A=\text{Dist}(U_{1})$ where $U$ is the unipotent radical of a Borel subgroup. 
If $p\neq 2$ when the underlying root system is of type $A_{2}$ or $B_{2}$ then 
$T(U_{1})\cong {\mathbb Z}$. We know that 
$\operatorname{Pic}(\operatorname{Proj}({\mathcal V}_{U_{1}}(k)))$ 
has rank at least one. The theorem above demonstrates that $\text{rank } \operatorname{Pic}(\operatorname{Proj}
({\mathcal V}_{U_{1}}(k)))=1$. 
\end{exm}   


\section{Open Problems:} 

We conclude this paper by presenting several open questions which the authors 
view as worthy of further investigation. 

Let $P$ be a projective indecomposable $G_{r}$-module. It has been a long 
standing conjecture that the $G_{r}$-action of 
$P$ should lift to a rational action 
of $G$. Ballard showed that this holds when 
$p\geq 3(h-1)$ and Jantzen proved this 
when $p\geq 2(h-1)$. For a discussion of this 
problem, see \cite[II. 11.11]{Jan}. 
This motivates our first question. 
\vskip .25cm 
\noindent
(1) Let $G$ be a reductive algebraic group and let $M$ be an endotrivial 
$G_{r}$-module. Does $M$ lift to a $G$-structure? 
\vskip .25cm 
We have seen that in most cases our computations 
show that $T(H_{r})\cong {\mathbb Z}$ 
where $H$ is an affine algebraic group scheme. 
This provides some basis for asking the following 
two questions: 
\vskip .25cm 
\noindent
(2) Let $H$ be an arbitrary finite group scheme. Is $T(H)$ finitely generated? 
For finite groups this was first proved by 
Puig \cite{Pu}, but also follows from the 
the classification of endotrivial modules over $p$-groups \cite{CaTh3}. 
\vskip .25cm 
The final two questions entail a finding a more 
detailed description of our computations 
in Sections 5-8. 
\vskip .25cm 
\noindent
(3) Suppose that $U$ is the unipotent radical of the Borel subgroup of a 
semisimple algebraic group $G$. For $r \geq 2$, does $A=\text{Dist}(U_{r})$ 
satisfy Hypothesis~\ref{hyp3}, or more 
generally is $T(U_r)$ generated by $\Omega(k)$?
\vskip .25cm 
\noindent
(4) Determine the structure of $T(({SL}_{2})_{r})$ 
and in general $T(G_{r})$ where $G$ is 
a reductive algebraic group scheme. This is the 
first step in solving the more general 
question of finding $T(G)$ for $G$ a finite group scheme. 
\vskip.25cm

\noindent

\vskip.3in


\begin{thebibliography}{BeCaRo}


\bibitem{A1} J. Alperin, A construction of
        endo-permutation modules, {\em J. Group Theory}, {\sf 4} (2001), 3--10.
        
\bibitem{Bal1} P. Balmer, The spectrum of prime ideals in tensor triangulated 
categories, {\em J. Reine Angew. Math.}, {\sf 588} (2005), 149-168.

\bibitem{Bal2} P. Balmer, Picard groups in triangular geometry and applications to 
modular representation theory, preprint, (2009). 

\bibitem{BBC}  P. Balmer, D. Benson and J. Carlson,  Gluing
representations via idempotent modules and constructing endotrivial
modules, {\em J. Pure Appl. Algebra}, {\sf 213} (2009), 173 \--193.

\bibitem{BF}
P.~Balmer and G.~Favi, Gluing techniques in triangular geometry,
{\em Quarterly ~J.~Math.}, {\sf 58} (2007), 415-441.

\bibitem{B} S.~Bouc, The Dade group of a $p$-group, {\em Invent.
      Math.}, {\sf 164} (2006), 189-231.

\bibitem{BMP} M. Bremner, R. Moody, J. Patera, {\em 
Tables of dominant weight multiplicities for representations of simple Lie algebras}, 
Monographs and Textbooks in Pure and Applied Mathematics, 90. Marcel Dekker, Inc., New York, 1985.

\bibitem{CFP} J. Carlson, E. Friedlander and J. Pevtsova, 
Modules of constant Jordan type, {\em J. Reine Angew. Math.}, {\sf 2008} (2008), 191\--234. 
        
\bibitem{CHM} J. Carlson, D. Hemmer and N. Mazza, The  group of
endotrivial modules for the symmetric and alternating groups,
{\em Proc. Edinburgh Math. Soc. (to appear)}.

\bibitem{CLNP} J. Carlson, Z. Lin, D. Nakano, B. Parshall, 
The restricted nullcone,  {\em Cont. Math.}, {\sf 325} (2003), 51--75.

\bibitem{CNP} J. Carlson, D. Nakano, K. Peters, On the 
vanishing of extensions of modules over reduced enveloping 
algebras,  {\em Math. Ann.},  {\sf 302} (1995), 541--560. 

\bibitem{CMN1} J. Carlson, N. Mazza and D. Nakano, Endotrivial
        modules for finite groups of Lie type,  {\em J. Reine Angew. Math.}, {\sf 595} (2006), 284--306.

\bibitem{CMN2} J. Carlson, N. Mazza and D. Nakano, Endotrivial modules 
for the symmetric group, {\em Proc. Edinburgh Math. Soc.}, {\sf 51} (2008), 1--22.

\bibitem{CT1} J.  Carlson and J. Th\'evenaz, Torsion endotrivial
 modules, {\em Algebras and Representation Theory}, {\sf 3} (2000), 303--335.

\bibitem{CaTh2} J.~Carlson and J.~Th\'evenaz, The 
classification of torsion endo-trivial modules, 
{\em Annals of Math.}, (2) {\sf 165} (2005), 823--883. 

\bibitem{CaTh3} J.~Carlson and J.~Th\'evenaz, The 
classification of endo-trivial modules, {\em Invent. Math.}, 
{\sf 158} (2004), 389--411. 


\bibitem{D1a} E. Dade, Endo-permutation modules over $p$-groups I, {\em 
Annals of  Math.}, {\sf 107} (1978), 459--494. 


\bibitem{D1b} E. Dade, Endo-permutation modules over $p$-groups II, 
{\em Annals of  Math.}, {\sf 108} (1978), 317--346.

\bibitem{D2} E. Dade, {\em Algebraically rigid modules},  
Representation theory, II (Proc. Second
Internat. Conf., Carleton Univ., Ottawa, Ont., 1979),  pp. 195--215,
Lecture Notes in Math., 832, Springer, Berlin, 1980.

\bibitem{FPa} E.  Friedlander, B.  Parshall, Support varieties for restricted Lie algebras, 
{\em Invent. Math.}, {\sf 86} (1986), 553--562.


\bibitem{FP1}  E. Friedlander, J. Pevtsova, Representation-theoretic
support spaces for finite group schemes, {\em Amer. J. Math.}, {\sf 127} (2005),
379--420.

\bibitem{FP} E. Friedlander, J. Pevtsova, $\Pi$-supports for modules
for finite group schemes, {\em Duke. Math. J.}, {\sf 107} (2007), 317--368.

\bibitem{FPS}  E. Friedlander, J. Pevtsova, A Suslin, Generic and
maximal Jordan types, {\em Invent. Math (to appear)}.

\bibitem{Hum} J. Humphreys, {\em Introduction to Lie algebras and representation theory}, 
Graduate Texts in Mathematics, Vol. 9., Springer-Verlag, New York-Berlin, 1972. 

\bibitem{Jan1} J. Jantzen, Kohomologie von p-Lie Algebren und nilpotente Elemente,
{\em Abh. Math. Sem. Univ. Hamburg} {\sf 56} (1986), 191-219.

\bibitem{Jan} J. Jantzen, {\em Representations of  
algebraic groups}, Second Edition, American Mathematical Society, 
Providence R.I., 2003. 

\bibitem{Pu} L. Puig, Affirmative answer to a question of Feit, {\em J. Algebra}, 
  {\sf 131} (1990), 513--526.

\bibitem{Sm} S. Smith, Irreducible modules and parabolic subgroups, {\em J. Algebra}, 
{\sf 75} (1982), 286--289. 
        
\end{thebibliography}
\end{document}